\pgfplotsset{compat=1.15}
\theoremstyle{plain}
\newtheorem{theorem}{Theorem}[section]
\newtheorem{corollary}[theorem]{Corollary}
\newtheorem{lemma}[theorem]{Lemma}
\newtheorem{proposition}[theorem]{Proposition}
\newtheorem{theoremAH}{Theorem AH}
\newtheorem{theoremA}{Theorem}
\newtheorem{propositionA}[theoremA]{Proposition}
\theoremstyle{definition}
\newtheorem{definition}[theorem]{Definition}
\newtheorem*{definition*}{Definition}
\newtheorem{example}[theorem]{Example}
\newtheorem{counterexample}[theorem]{Counter-example}
\newtheorem{remark}[theorem]{Remark}
\newcommand{\AC}{\mathbb{A}_{\mathbb{C}}}
\newcommand{\N}{\mathbb{N}}
\newcommand{\Z}{\mathbb{Z}}
\newcommand{\Q}{\mathbb{Q}}
\newcommand{\R}{\mathbb{R}}
\newcommand{\K}{\mathbb{K}}
\renewcommand{\L}{\mathbb{L}}
\newcommand{\C}{\mathbb{C}}
\newcommand{\s}{\mathbb{S}} 
\newcommand{\D}{\mathcal{D}}
\newcommand{\T}{\mathbb{T}}
\newcommand{\GC}{\mathbb{G}_{m,\mathbb{C}}}
\newcommand{\GR}{\mathbb{G}_{m,\mathbb{R}}} 
\newcommand{\Spec}{\mathrm{Spec}} 
\newcommand{\Frac}{\mathrm{Frac}} 
\renewcommand{\H}{\mathrm{H}} 
\newcommand{\Aut}{\mathrm{Aut}} 
\newcommand{\Hom}{\mathrm{Hom}} 
\newcommand{\Div}{\mathrm{div}} 
\newcommand{\Gal}{\mathrm{Gal}} 
\newcommand{\GL}{\mathrm{GL}} 
\newcommand{\RC}{{R}_{\mathbb{C} /  \mathbb{R}}} 
\newcommand{\PPDiv}{\mathrm{PPDiv}_{\Q}}
\title{Real torus actions on real affine algebraic varieties}
\author{Pierre-Alexandre Gillard}
\thanks{The IMB receives support from  the EIPHI Graduate School (contract ANR-17-EURE-0002).
}
\address{Institut de Math\'{e}matiques de Bourgogne, UMR 5584 CNRS, Universit\'{e} Bourgogne Franche-Comt\'{e}, F-21000 Dijon, France}
\email{pierre-alexandre.gillard@u-bourgogne.fr}
\keywords{Affine variety, torus action, real variety,  real structure, real form}
\subjclass[2020]{%
  14R20% Group actions on varieties
, 14L30% Group actions on varieties or schemes
, 20G20% Linear algebraic groups over the reals, the complexes, the quaternions
, 11E72% 	Galois cohomology of linear algebraic groups
, 14P99% Real algebraic and real analytic geometry
}
\begin{document}

\begin{abstract}
We extend the Altmann-Hausen presentation of normal affine algebraic $\C$-varieties endowed with effective torus actions to the real setting. In particular, we focus on actions of quasi-split real tori, in which case   we obtain a simpler presentation.
\end{abstract}

\maketitle

\tableofcontents

\section*{Introduction}

In the work of Altmann and Hausen in \cite{Alt}, normal affine algebraic  varieties endowed with effective torus actions over an algebraically closed field of characteristic zero are determined by  a   geometrico-combinatorial datum on a certain rational quotient for the action. This geometrico-combinatorial  presentation extends \textsl{mutatis mutandis} to actions of real split tori $\GR^n$ on normal affine algebraic $\R$-varieties.

In contrast, for normal   $\R$-varieties endowed with actions of a non-split torus $T$,  much less is known regarding the existence of a presentation similar to the split case.  However, this presentation  was   extended by Langlois in  \cite{Lang} for some complexity one\footnote{That is, effective actions of a  torus $T$ such that $\text{dim}(X)=\text{dim}(T)+1$} non-split torus actions     on normal affine  varieties $X$ over an arbitrary field.  This extension is based on a Galois descent construction specific to complexity one torus actions. On the other hand, the case where $T$ is the real circle $\mathbb{S}^1$ (of dimension 1) was studied by Dubouloz, Liendo and Petitjean in \cite{Lien, Petit}.   They gave  a  complete description of $\s^1$-actions on normal   affine $\R$-varieties based on the Altmann-Hausen presentation and on a Galois descent construction $\C/\R$ specific to $\s^1$-actions, with no restriction on the complexity.

In view of these results, it is natural and reasonable to expect that a general presentation of normal affine varieties endowed with torus actions over arbitrary fields of characteristic zero can be obtained by combining Altmann-Hausen theory for split torus actions with appropriate Galois descent methods. In this context, we give a complete description of real torus actions on normal $\R$-varieties. The Weil restriction $\RC(\GC)$ of $\GC$ is a real non-split torus (of dimension 2), and all real tori are isomorphic to a product of the three \textsl{elementary} real tori $\GR$, $\mathbb{S}^1$ and $\RC(\GC)$. We treat the missing case of $\RC(\GC)$-actions and more generally we  extend  the setting  of Altmann-Hausen to   real torus actions on normal  affine $\R$-varieties. We will pay a special attention to  actions of quasi-split tori, that is real tori with no $\s^1$-factors.

\medskip

In view of extending the Altmann-Hausen presentation to the real setting, we use the language of $\R$-structures on    algebraic $\C$-varieties.  An $\R$-structure on an algebraic $\C$-variety $X$ is an   involution of $\R$-schemes $\sigma$ on $X$ such that the following diagram commutes:
\begin{center}
\begin{tikzpicture}
  \matrix (m) [matrix of math nodes,row sep=1em,column sep=4em,minimum width=2em]
  {          X       &                          &     X                      \\
    \text{Spec}(\C) &                          &    \text{Spec}(\C)    \\
 };
  \path[-stealth]
    (m-1-1) edge node [above] {$\sigma    $} (m-1-3)
           edge  (m-2-1)
    (m-2-1) edge node [above] { \small{$\Spec(  z \mapsto \bar{z} )$} } (m-2-3)
		(m-1-3) edge  (m-2-3);
\end{tikzpicture}
\end{center}
An $\R$-morphism between two $\C$-varieties $X$ and $X'$ endowed with $\R$-structures $\sigma$ and $\sigma'$ is a morphism of $\C$-varieties $f : X \to X'$ such that $\sigma' \circ f = f \circ \sigma$. 
An $\R$-group structure $\tau$ on a complex algebraic group $G$ is an $\R$-structure on $G$ such that the multiplication $G \times G \to G$, the inverse $G \to G$ and the unity $\Spec(\C) \to G$ are $\R$-morphisms (see \S \ref{SectionGDAT} for details).  Let us note  that an  $\R$-group structure $\tau$ on a complex torus $\mathbb{T}$ corresponds to a lattice involution $\tilde{\tau}$ on its character lattice $M:= \Hom_{gr}(\mathbb{T}, \GC)$. 
 
There is an equivalence of categories between the category of quasi-projective algebraic $\R$-varieties (resp. real algebraic groups) and the category of quasi-projective algebraic $\C$-varieties endowed with an $\R$-structure (resp. complex algebraic groups endowed with an $\R$-group structure); see Proposition \ref{eqcat} for the precise statement. Therefore we will often  write $(X, \sigma)$ to refer to an algebraic $\R$-variety and  $(G, \tau)$ to refer to a real algebraic group.

\medskip

We now briefly explain Altman and Hausen's theory in order to state our main results. Let $\T$ be an $n$-dimensional complex torus with character lattice $M$. Then any algebraic action of $\T$ on an affine $\C$-variety $X$ corresponds to an $M$-grading $\C[X] = \bigoplus_{m \in M} \C[X]_m $ of its coordinate ring, the spaces $\C[X]_m$ consisting of semi-invariant regular functions of weight $m$ on $X$.  Let $\omega$ be a   full dimensional cone in $M_{\Q}:= M \otimes_{\Z} \Q$, let   $Y$ be a normal \textsl{semi-projective} variety (see Definition \ref{semiproj}), and let $\D:= \sum \Delta_i \otimes D_i$ be a  \textsl{proper polyhedral divisor}. This means that the $D_i$ are prime divisors on $Y$ and the coefficients $\Delta_i$ are convex polyhedra in   $N_{\Q}$ having $\omega^{\vee}$ as tail cone, where $N$ is the cocharacter lattice  (see Definition \ref{ppDiv}).  Then, for every $m \in \omega \cap M$, we can evaluate $\D$ in $m$ to obtain a Weil $\Q$-divisor $\D(m):= \sum \text{min}\{ \langle m | \Delta_i \rangle \} \otimes D_i$. From the datum ($\omega$, $Y$, $\D$),  Altmann and Hausen construct   an $M$-graded $\C$-algebra:
\begin{equation*}
A[Y, \D]:= \bigoplus_{m \in \omega \cap M} \H^0(Y, \mathcal{O}_Y(\D(m))) \subset \C(Y)[M].
\end{equation*}  
The main results of \cite{Alt} can be summarized as follows (see \S 3 for details):
\begin{theoremAH} \label{thmAltintro}\cite[Theorem 3.1]{Alt}. \textsl{The affine scheme $X[Y, \D] := \Spec(A[Y, \D])$  is a normal $\C$-variety  endowed with an effective  $\mathbb{T}$-action.}
\end{theoremAH}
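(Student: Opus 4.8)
The plan is to show that $A[Y,\D]$ is a normal, finitely generated, integrally closed domain of the right dimension, and that the $M$-grading induces an effective $\T$-action. Let me sketch how I'd establish each of these.

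**Setup of the proof.**

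Since this is the foundational Altmann–Hausen theorem, I need to verify that $X[Y,\D]=\Spec(A[Y,\D])$ is genuinely an affine variety (so $A[Y,\D]$ is a finitely generated integral domain) that is normal, and that the $\T$-action encoded by the $M$-grading is effective. Let me organize the argument.

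The proof would proceed as follows.

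First I would establish that $A[Y,\D]$ is an integral domain. This is immediate from the construction: each graded piece $\H^0(Y,\mathcal{O}_Y(\D(m)))$ sits inside the function field $\C(Y)$, and the inclusion $A[Y,\D]\subset \C(Y)[M]$ realizes the algebra as a subring of the domain $\C(Y)[M]$ (the group algebra of the lattice $M$ over the field $\C(Y)$, which is a domain since $M$ is torsion-free). The key point here is that the multiplication respects the grading via $\H^0(Y,\mathcal{O}_Y(\D(m)))\cdot\H^0(Y,\mathcal{O}_Y(\D(m')))\subset \H^0(Y,\mathcal{O}_Y(\D(m+m')))$, which follows from the superadditivity $\D(m)+\D(m')\leq\D(m+m')$ of the evaluation map. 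This superadditivity is where the convexity of the polyhedral coefficients $\Delta_i$ enters: since $\min\langle m\mid\Delta_i\rangle$ is a concave (superadditive) function of $m$, we get $\D(m)+\D(m')\leq\D(m+m')$ as divisors, hence the product of sections of the two line bundles lands in sections of the bundle for $m+m'$.

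Next I would prove finite generation and compute the dimension. Finite generation is the technical heart, and I expect \emph{this to be the main obstacle}. One cannot simply invoke that $Y$ is semi-projective; one must use the properness of the polyhedral divisor $\D$ in an essential way. The strategy is to exhibit $A[Y,\D]$ as a subalgebra of a finitely generated algebra and show it is itself finitely generated: concretely, one works with the relative spectrum $\widetilde{X}=\Spec_Y\big(\bigoplus_{m\in\omega\cap M}\mathcal{O}_Y(\D(m))\big)$, a scheme affine over $Y$, and shows that its ring of global sections is $A[Y,\D]$. Because $Y$ is semi-projective (projective over its affinization $\Spec(\H^0(Y,\mathcal{O}_Y))$) and the cone $\omega$ together with the properness of $\D$ guarantees the needed positivity, one obtains finite generation of the total section ring; the dimension count $\dim X[Y,\D]=\dim Y+n$ comes from the rank-$n$ lattice grading over the $\dim Y$-dimensional base. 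I would isolate the properness condition as precisely what forces the evaluations $\D(m)$ to be semiample/big enough for the Cox-type ring to be finitely generated.

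For normality I would argue that $A[Y,\D]$ is integrally closed in its fraction field. The cleanest route is to write $A[Y,\D]=\bigcap$ of valuation-type conditions: a homogeneous element $f\chi^m\in\C(Y)[M]$ lies in $A[Y,\D]$ precisely when $\Div(f)+\D(m)\geq 0$, i.e.\ the order of $f$ along each prime divisor satisfies the inequality imposed by $\min\langle m\mid\Delta_i\rangle$. Since $Y$ is normal, $\mathcal{O}_Y(\D(m))=\bigcap_{v}\{f : v(f)\geq -\D(m)_v\}$ is an intersection of discrete valuation rings, and an intersection of integrally closed (graded) pieces assembled with respect to a saturated lattice cone $\omega\cap M$ remains integrally closed. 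I would verify that the grading semigroup $\omega\cap M$ is saturated (as $\omega$ is a cone) so that no integral elements are missed in the lattice direction.

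Finally, the $\T$-action and its effectiveness: the $M$-grading $A[Y,\D]=\bigoplus_{m\in\omega\cap M}A[Y,\D]_m$ is equivalent to a $\T$-action on $X[Y,\D]$ by the standard correspondence between $M$-gradings and $\T$-actions recalled in the excerpt. Effectiveness amounts to showing the weight semigroup $\omega\cap M$ generates $M$ as a group, which holds because $\omega$ is a \emph{full-dimensional} cone in $M_\Q$, so $\omega\cap M$ spans $M_\Q$ and hence generates a finite-index subgroup; saturation of $\omega\cap M$ upgrades this to generation of all of $M$, giving an effective (indeed faithful) torus action. I would close by noting that normality of $X[Y,\D]$ together with effectiveness of the action confirms all assertions of the theorem.
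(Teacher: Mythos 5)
The paper you are working from contains no proof of this statement at all: it is quoted as background from \cite[Theorem 3.1]{Alt} (as Theorem AH \ref{thmAltintro}, and again inside Theorem \ref{thmAlt}), so your attempt can only be measured against Altmann--Hausen's original argument. Your overall architecture does match theirs: the embedding $A[Y,\D]\subset\C(Y)[M]$ with multiplication controlled by the superadditivity $\D(m)+\D(m')\le\D(m+m')$ (concavity of $m\mapsto\min\langle m,\Delta_Z\rangle$), the relative spectrum $\Spec_Y\bigl(\bigoplus_m\mathcal{O}_Y(\D(m))\bigr)$, normality via an intersection of valuation-type conditions inside $\C(Y)(M)$, and the dictionary between $M$-gradings and $\T$-actions. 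The normality sketch is essentially correct, modulo the standard remark that the integral closure of a domain graded by a torsion-free group is again graded, so it suffices to test homogeneous elements.

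Two steps, however, are genuinely gapped. First, finite generation --- which you rightly call the heart --- is not proved: the sentence ``the properness of $\D$ guarantees the needed positivity, one obtains finite generation of the total section ring'' restates the theorem rather than argues it. This cannot be waved through: rings of global functions of quasi-projective varieties are \emph{not} finitely generated in general (counterexamples to Hilbert's fourteenth problem arise exactly this way), so properness must be used in a precise mechanism; in \cite{Alt} this means passing to the quasifan of linearity domains of $m\mapsto\D(m)$, invoking Zariski-type finite generation of section rings of semiample divisors on varieties projective over affine ones, and assembling the finitely many resulting pieces. Second, your effectiveness argument rests on a false premise: you identify the weight monoid with $\omega\cap M$, but the graded pieces $\H^0(Y,\mathcal{O}_Y(\D(m)))$ can vanish for $m\in\omega\cap M$ because of round-downs of $\Q$-divisors --- semiampleness only guarantees nonvanishing for suitable multiples. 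Concretely, on $Y=\mathbb{P}^1_{\C}$ with $N=\Z$ and tail cone $\Q_{\ge0}$, the divisor $\D=\bigl(\tfrac12+\Q_{\ge0}\bigr)\otimes\{0\}+\bigl(-\tfrac13+\Q_{\ge0}\bigr)\otimes\{\infty\}$ is a pp-divisor (each $\D(m)=\tfrac m2\{0\}-\tfrac m3\{\infty\}$ has degree $m/6>0$, hence is semiample and big), yet $\H^0\bigl(Y,\mathcal{O}_Y(\D(1))\bigr)=\H^0\bigl(Y,\mathcal{O}_Y(-\{\infty\})\bigr)=0$, so $1$ is not a weight. Moreover, ``saturation upgrades finite index to all of $M$'' is not an argument: saturation of $\omega\cap M$ says nothing about the subgroup generated by the actual weights. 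The repair --- and what Altmann--Hausen do --- uses bigness: pick a section $f\in A_{km_0}$ with $Y_f$ affine; over the affine open $Y_f$ \emph{every} piece $\H^0(Y_f,\mathcal{O}_Y(\D(m)))$ is nonzero, this algebra is the localization $A_f$, and since $\omega$ is full dimensional its lattice points generate $M$ (interior-point trick), so the weights of $A$ generate $M$ and the action is effective.
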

\begin{theoremAH} \label{thmAltintro2}\cite[Theorem 3.4]{Alt}. \textsl{Let $X$ be an affine normal variety endowed with an effective $\mathbb{T}$-action. There exists a datum  $(\omega, Y, \D)$   such that the graded $\C$-algebras $\C[X]$ and $A[Y, \D]$ are isomorphic.}
\end{theoremAH}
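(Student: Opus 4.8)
The plan is to reverse the construction of Theorem AH \ref{thmAltintro}: starting from the $M$-graded algebra $A := \C[X] = \bigoplus_{m} A_m$, I would extract in turn the weight cone $\omega$, a base variety $Y$, and the polyhedral coefficients of $\D$. First I would recover the weight cone by letting $\omega$ be the convex cone in $M_\Q$ generated by the weight monoid $\{ m \in M : A_m \neq 0 \}$. Since $X$ is normal and affine and the $\T$-action is effective, $A$ is a finitely generated normal domain whose weight monoid generates $M$ as a group; one then checks that $\omega$ is pointed and full-dimensional, which is exactly the cone required in Definition \ref{ppDiv}.

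Next I would build the base $Y$ from the rational quotient. Let $K_0 := \C(X)^{\T}$ be the field of $\T$-invariant rational functions; by Rosenlicht it is the function field of a rational quotient of $X$, of transcendence degree $\dim X - \dim \T$ over $\C$. For each $m \in M$ I would choose a nonzero semi-invariant $\chi^m \in \C(X)$ of weight $m$, compatibly so that $\chi^{m}\chi^{m'} = \chi^{m+m'}$; this trivializes each graded piece and realizes $A_m$ as a $K_0$-subspace of $K_0 \cdot \chi^m$. The key point is that, $A$ being normal, each $A_m$ is a reflexive rank-one module, hence cut out by a divisorial condition. I would then take $Y$ to be a normal \emph{semiprojective} model with function field $K_0$ --- produced as a suitable GIT/Chow quotient of $X$ together with a projective-over-affine completion --- on which, for every $m \in \omega \cap M$, this condition is expressed by a Weil $\Q$-divisor $\D(m)$, yielding $A_m = \H^0(Y, \mathcal{O}_Y(\D(m)))\,\chi^m$, so that $A \cong A[Y, \D]$ as soon as the $\D(m)$ assemble into a single polyhedral divisor.

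The heart of the argument --- and the step I expect to be the main obstacle --- is to show that the family $m \mapsto \D(m)$ arises from a single proper polyhedral divisor $\D = \sum \Delta_i \otimes D_i$. Multiplication $A_m \cdot A_{m'} \subseteq A_{m+m'}$ forces the superadditivity $\D(m) + \D(m') \leq \D(m+m')$, while normality forces the matching inequalities on the relevant cones; together these make $m \mapsto \D(m)$ piecewise linear and concave on $\omega$, which is precisely the condition that its coefficients $\min\{ \langle m | \Delta_i \rangle \}$ come from convex polyhedra $\Delta_i \subset N_\Q$ with recession cone dual to $\omega$. Finiteness of a generating set of $A$ (affineness of $X$) should then translate into the properness of $\D$, via semiampleness and bigness of the evaluations $\D(m)$.

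Assembling these ingredients --- the passage from a concave, piecewise-linear, divisor-valued function to genuine polyhedral coefficients supported on a \emph{single} semiprojective $Y$, together with the verification of properness --- is the technical core, and is where the geometry of the quotient and the GIT/Chow-quotient machinery enter most heavily. Once it is in place, the desired isomorphism of graded $\C$-algebras $\C[X] \cong A[Y, \D]$ follows by comparing the homogeneous components degree by degree.
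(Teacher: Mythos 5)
Your outline is correct in substance, but it follows a genuinely different route from the one this paper uses. (The paper quotes this statement from \cite{Alt} without reproving it; the method it actually presents and then extends to the real setting --- in \S\ref{SubsectionETD} and the proof of Theorem \ref{ResultT2} --- is the toric downgrading recipe of \cite[\S 11]{Alt}.) What you sketch --- trivializing the graded pieces by a multiplicative choice of semi-invariants $\chi^m$ (possible because $M$ is free and effectiveness makes every weight realized), reading off each $A_m$ as $\H^0(Y,\mathcal{O}_Y(\D(m)))\chi^m$ on a model of $\C(X)^{\T}$, and assembling the $\D(m)$ into one pp-divisor via superadditivity, homogeneity and piecewise linearity, with $Y$ and properness coming from GIT --- is essentially Altmann and Hausen's own original proof via the chamber structure and inverse limits of GIT quotients. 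The paper's route is instead: choose homogeneous generators of $\C[X]$ to get a $\T$-equivariant closed embedding $X \hookrightarrow \AC^n$ meeting the big torus, downgrade the $\GC^n$-action through the exact sequence $0 \to M_Y \to M' \to M \to 0$ with a lattice section $s^*$ (so the trivializations $u(m) = \chi^{s^*(m)}$ are multiplicative for free), let $Y$ be the toric variety of the projected fan and $\tilde{Y}_X$ the normalized closure of the image of $X \cap \GC^n$, and pull back the explicit pp-divisor with coefficients $\Delta_{v_i} = s(\Q^n_{\geq 0} \cap P^{-1}(v_i))$; the graded isomorphism is then verified degree by degree by toric section computations. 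Your approach buys canonicity (no embedding is chosen, $Y$ is intrinsically a quotient) at the cost of the GIT machinery; the paper's approach buys explicitness in lattice data, and that explicitness is precisely what lets it keep track of the Galois action and produce the $\Gamma$-equivariant refinement (Proposition \ref{GaloisToricDowngrading} and Theorem \ref{ResultT2}), which would be much harder to extract from the inverse-limit-of-GIT-quotients construction.

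One small correction: the weight cone $\omega$ is full-dimensional in $M_{\Q}$ (effectiveness makes the weight monoid generate $M$ as a group), but it is \emph{not} pointed in general --- for instance $\T = \GC$ acting on $\GC \times \AC^1$ by translation on the first factor has weight cone all of $M_{\Q}$. What must be pointed is the dual cone $\omega^{\vee} \subset N_{\Q}$, which is the common tail cone of the coefficients $\Delta_i$ in Definition \ref{ppDiv}; its pointedness is equivalent to full-dimensionality of $\omega$. This slip does not affect the rest of your argument.
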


\medskip

As mentioned above, the present article focuses on real torus actions on normal affine $\R$-varieties.  Our main results, Theorem \ref{ResultTintro} and Theorem \ref{ResultT2intro}, give a presentation of real torus actions in the language of \cite{Alt} extended to affine $\C$-varieties with $\R$-structures.

Let   $(\mathbb{T}, \tau)$ be a real torus, let $M$ be the  character lattice of $\mathbb{T}$, and let $\omega$ be a full dimensional cone in $M_{\Q}$.   Let $(Y, \sigma_Y)$ be a    semi-projective algebraic $\R$-variety and  let $\D$ be a proper polyhedral divisor on $Y$.  The first main result gives a condition on $\D$ for the existence of an $\R$-structure on the affine $\C$-variety  $X[Y,\D]$. This result is the real analog of Theorem AH \ref{thmAltintro}:  
\begin{theoremA}[Theorem \ref{ResultT}]  \label{ResultTintro}   
\textsl{If there exists a  {monoid}   {morphism} $h :   \omega \cap M \to \C(Y)^*$ such that 
\begin{equation*}
\forall m \in \omega \cap M, \ \sigma_Y^*(\D(m)) = \D(\tilde{\tau}(m)) + \Div_Y(h(\tilde{\tau}(m))) \text{ \ \  and \ \ } h(m) \sigma_Y^{\sharp}(h(\tilde{\tau}(m)))=1, 
\end{equation*}
then there exists an $\R$-structure $\sigma_{X[Y, \D]}$ on the normal affine variety $X{[Y, \D]} $ such that the real torus  $(\mathbb{T}, \tau)$ acts   on  the $\R$-variety $({X[Y, \D]}, \sigma_{X[Y, \D]})$.
}\end{theoremA}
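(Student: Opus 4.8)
The plan is to translate the existence of an $\R$-structure on $X[Y,\D]$ into the construction of a $\C$-antilinear ring involution $\sigma^*$ of the graded algebra $A[Y,\D]$, since by Proposition \ref{eqcat} an $\R$-structure on the affine $\C$-variety $\Spec(A[Y,\D])$ is the same datum as such an involution of its coordinate ring. Normality and the $\mathbb{T}$-action are already furnished by Theorem AH \ref{thmAltintro}, so only the real structure and its compatibility with the action remain to be produced. Using the embedding $A[Y,\D]\subset \C(Y)[M]$, I would define $\sigma^*$ on a homogeneous element $f\chi^m$ with $f\in \H^0(Y,\mathcal{O}_Y(\D(m)))$ by
\begin{equation*}
\sigma^*(f\chi^m):=\sigma_Y^*(f)\cdot h(\tilde\tau(m))\cdot \chi^{\tilde\tau(m)},
\end{equation*}
where $\sigma_Y^*$ is the antilinear involution of $\C(Y)$ attached to $(Y,\sigma_Y)$, and extend $\sigma^*$ $\C$-antilinearly to all of $A[Y,\D]$.

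First I would check that $\sigma^*$ is well defined, i.e. that $\sigma_Y^*(f)\,h(\tilde\tau(m))$ lies in $\H^0(Y,\mathcal{O}_Y(\D(\tilde\tau(m))))$. Indeed, $f\in \H^0(Y,\mathcal{O}_Y(\D(m)))$ means $\Div_Y(f)+\D(m)\ge 0$; applying the involution $\sigma_Y$ gives $\Div_Y(\sigma_Y^*(f))+\sigma_Y^*(\D(m))\ge 0$, and substituting the hypothesis $\sigma_Y^*(\D(m))=\D(\tilde\tau(m))+\Div_Y(h(\tilde\tau(m)))$ yields exactly $\Div_Y(\sigma_Y^*(f)\,h(\tilde\tau(m)))+\D(\tilde\tau(m))\ge 0$. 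Thus $\sigma^*$ carries the weight-$m$ component into the weight-$\tilde\tau(m)$ component. Multiplicativity then follows formally from $h$ being a monoid morphism, from the additivity of $\tilde\tau$, and from $\sigma_Y^*$ being a ring homomorphism: both $\sigma^*(f_1\chi^{m_1}\cdot f_2\chi^{m_2})$ and $\sigma^*(f_1\chi^{m_1})\,\sigma^*(f_2\chi^{m_2})$ equal $\sigma_Y^*(f_1)\sigma_Y^*(f_2)\,h(\tilde\tau(m_1))h(\tilde\tau(m_2))\,\chi^{\tilde\tau(m_1)+\tilde\tau(m_2)}$.

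I expect the main obstacle to be the verification that $\sigma^*$ is an \emph{involution}. A direct computation gives, on the weight-$m$ piece,
\begin{equation*}
(\sigma^*)^2(f\chi^m)=\sigma_Y^*\big(h(\tilde\tau(m))\big)\,h(m)\, f\,\chi^m,
\end{equation*}
so $\sigma^*$ squares to the identity precisely when $c(m):=h(m)\,\sigma_Y^*(h(\tilde\tau(m)))=1$ for every $m\in\omega\cap M$. Applying $\sigma_Y^*$ to the hypothesis and using $\tilde\tau^2=\mathrm{id}$ shows that $\Div_Y(c(m))=0$, so a priori $c(m)$ is only a global invertible function, and one checks that $c$ is a group homomorphism satisfying the twisted invariance $\sigma_Y^*(c(\tilde\tau(m)))=c(m)$, i.e. $c$ is a $1$-cocycle for the $\Z/2$-action generated by $\sigma_Y^*\circ\tilde\tau$ on $\H^0(Y,\mathcal{O}_Y^*)$. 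The crux is therefore to normalise the choice of $h$: replacing $h$ by $u\cdot h$ for a monoid morphism $u$ into the units changes $c(m)$ into $u(m)\,\sigma_Y^*(u(\tilde\tau(m)))\,c(m)$, so killing $c$ amounts to trivialising this cocycle by a coboundary. Here I would invoke the semi-projectivity of $Y$ to control $\H^0(Y,\mathcal{O}_Y^*)$ together with a Hilbert~90 / Galois-descent argument over $\C/\R$ to produce the required $u$; this cohomological vanishing is the only genuinely nontrivial point.

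Finally, granting $(\sigma^*)^2=\mathrm{id}$, I would verify that the resulting $\R$-structure $\sigma_{X[Y,\D]}$ is compatible with the $\mathbb{T}$-action. Since the action corresponds to the $M$-grading, with comodule map sending $f\chi^m\mapsto \chi^m\otimes f\chi^m$, and since $\sigma^*$ carries the weight-$m$ component to the weight-$\tilde\tau(m)$ component --- which is exactly how the $\R$-group structure $\tau$ acts on characters through $\tilde\tau$ --- the action morphism $\mathbb{T}\times X[Y,\D]\to X[Y,\D]$ intertwines $\tau\times\sigma_{X[Y,\D]}$ and $\sigma_{X[Y,\D]}$. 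This last compatibility is then a routine check on homogeneous elements, which completes the argument.
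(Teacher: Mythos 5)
Your construction is the same as the paper's in its proof of Theorem \ref{ResultT}: the same formula $\sigma^{*}(f\mathfrak{X}_m)=\sigma_Y^{\sharp}(f)\,h(\tilde\tau(m))\,\mathfrak{X}_{\tilde\tau(m)}$, the same well-definedness check from the divisor condition, and the same equivariance diagram; and you have correctly isolated the crux, namely that $(\sigma^{*})^2$ acts on the weight-$m$ piece by $c(m):=h(m)\,\sigma_Y^{\sharp}(h(\tilde\tau(m)))$, so involutivity is exactly the condition $c\equiv 1$. The gap is in how you propose to achieve $c\equiv 1$. The statement you were given is the introduction's abbreviated form of Theorem \ref{ResultT}; in the body of the paper, hypothesis (\ref{eqdiv}) contains a \emph{second} condition, $h(m)\,\sigma_Y^{\sharp}(h(\tilde\tau(m)))=1$, and the proof reads off the involution property directly from it. You instead try to deduce this normalisation from the divisor condition alone, by rescaling $h$ by a unit-valued monoid morphism $u$. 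That requires writing the invariant element $c$ as a ``norm'' $u\cdot(\sigma_Y^{\sharp}\circ u\circ\tilde\tau)$ — a Tate-type $\hat{\H}{}^0$ obstruction rather than the $\H^1$ coboundary problem you describe — and no Hilbert 90 argument is available for it: the paper itself emphasises that such vanishing results hold only for quasi-split tori (Lemmas \ref{H1}--\ref{H1GRWR}, Corollary \ref{ResultH}, Proposition \ref{ResultQS}) and fail in general (\S\ref{SubsectionC}, where $\H^1(\Gamma,\C^*)=\{\pm 1\}$ for $\s^1$-actions).

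Here is a concrete failure of your renormalisation step. Take $\T=\GC$ with $\tau=\tau_0$ (so $\tilde\tau=\mathrm{id}$), $Y=\mathbb{P}^1_{\C}$ with the real structure $\sigma_Y(z)=-1/\bar{z}$ (no real points), and the pp-divisor $\D=[0,+\infty)\otimes\{0\}+[1,+\infty)\otimes\{\infty\}$, so that $\D(m)=m\{\infty\}$. Since $\sigma_Y$ exchanges $0$ and $\infty$, the divisor condition $\sigma_Y^{*}(\D(m))=\D(m)+\Div_Y(h(m))$ forces $h(m)=(\lambda t)^m$ for some $\lambda\in\C^*$; from $\sigma_Y^{\sharp}(t)=-t^{-1}$ one gets $c(m)=\bigl(-\lvert\lambda\rvert^{2}\bigr)^{m}$, which is never identically $1$, and since $\H^0(\mathbb{P}^1,\mathcal{O}^*)=\C^*$ with $u\,\sigma_Y^{\sharp}(u)=\lvert u\rvert^{2}>0$, no unit rescaling can repair it. (Accordingly, the antiregular map your formula produces on $X[Y,\D]=\AC^2$ squares to $(x,y)\mapsto(-x,-y)$: it is a quaternionic structure, not an $\R$-structure.) So the hypothesis of the statement as you received it is strictly weaker than what your construction needs; the proof becomes correct — and becomes the paper's proof — once the normalisation condition $h(m)\,\sigma_Y^{\sharp}(h(\tilde\tau(m)))=1$ is added to the hypothesis, which is precisely how Theorem \ref{ResultT} is stated in the body of the paper.
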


Conversely, given a $\T$-action on an affine algebraic $\C$-variety $X$,  Altmann and Hausen give in \cite[\S 11]{Alt} a method   to construct a proper polyhedral divisor $\D$ on a semi-projective variety $Y$    based on the choice of an appropriate $\mathbb{T}$-equivariant  closed immersion  $X \hookrightarrow \AC^n$ and on the downgrading of the $\GC^n$-action on $\AC^n$ to a $\T$-action.  

 Thus, for  a $(\mathbb{T}, \tau)$-action on an affine algebraic $\R$-variety $(X, \sigma)$,  a key ingredient to construct an   $\R$-structure on the semi-projective variety $Y$ mentioned in Theorem AH \ref{thmAltintro2} is to find a certain $\mathbb{T}$-equivariant closed immersion $X \hookrightarrow \AC^n$ which is also $\Gal(\C/\R)$-equivariant:
\begin{propositionA}[Proposition  \ref{GaloisToricDowngrading}] \label{GaloisToricDowngradingintro} 
\textsl{There exists  $n \in \N,\  n \geq   \text{dim}(\mathbb{T})$,  such that the following hold:}
\begin{enumerate}[leftmargin=0.75cm, label=(\roman*)]
\item \textsl{There exists an  $\R$-group structure $\tau'$ on $\GC^n$   that extends to a  $\R$-structure $\sigma'$ on $\AC^n $;}
\item \textsl{$(\mathbb{T}, \tau)$ is a closed  {subgroup} of $(\GC^n, \tau')$;} and 
\item \textsl{$(X, \sigma)$ is a closed subvariety of   $(\AC^n, \sigma)$   and   $(X, \sigma) \hookrightarrow (\AC^n, \sigma')$ is $(\mathbb{T}, \tau)$-equivariant.} 
\end{enumerate} 
\end{propositionA}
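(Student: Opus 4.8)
The plan is to reconstruct the toric downgrading of Altmann--Hausen cited above, but to make every choice entering it invariant under the Galois action, so that the resulting immersion is automatically a morphism of $\R$-varieties. The first step is to record how the real structure interacts with the grading: the $(\mathbb{T},\tau)$-action on $(X,\sigma)$ means that the $M$-grading $\C[X]=\bigoplus_{m\in M}\C[X]_m$ attached to the $\mathbb{T}$-action is compatible with the antilinear involution $\sigma^*$ on $\C[X]$, in the sense that $\sigma^*(\C[X]_m)=\C[X]_{\tilde\tau(m)}$ for all $m\in M$. This is just the algebraic translation of the equivariance $\sigma\circ a=a\circ(\tau\times\sigma)$ of the action map $a$, combined with the correspondence between $\tau$ and the lattice involution $\tilde\tau$, and everything below is built on it.

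First I would produce a finite-dimensional $\C$-subspace $V\subset\C[X]$ that is homogeneous (a sum of graded pieces), $\sigma^*$-stable, and generates $\C[X]$ as a $\C$-algebra. This is immediate: choose finitely many homogeneous generators $f_1,\dots,f_k$ of weights $m_1,\dots,m_k$ and set $V=\mathrm{span}_{\C}\{f_i,\sigma^*(f_i)\}$; since $\sigma^*(f_i)\in\C[X]_{\tilde\tau(m_i)}$ is again homogeneous, $V$ is homogeneous, it is $\sigma^*$-stable by construction, and it contains the $f_i$, hence generates. I would then choose a basis of $V$ adapted to $\sigma^*$. Writing $V=\bigoplus_m V_m$, the antilinear map $\sigma^*$ carries $V_m$ isomorphically onto $V_{\tilde\tau(m)}$; on a piece with $\tilde\tau(m)=m$ the restriction $\sigma^*|_{V_m}$ is an antilinear involution, which admits a $\C$-basis of fixed vectors (every antilinear involution on a finite-dimensional complex vector space has a real form), while for a pair $m\neq\tilde\tau(m)$ I pick any basis of $V_m$ and take its $\sigma^*$-image as a basis of $V_{\tilde\tau(m)}$. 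Concatenating gives a homogeneous basis $g_1,\dots,g_n$ of $V$, of weights $w_1,\dots,w_n\in M$, together with an involution $j\mapsto j^*$ of $\{1,\dots,n\}$ satisfying $\sigma^*(g_j)=g_{j^*}$ and therefore $w_{j^*}=\tilde\tau(w_j)$.

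Then I would assemble the data. The basis yields a closed immersion $\iota\colon X\hookrightarrow\AC^n$ with $\iota^*(x_j)=g_j$, closed because $V$ generates $\C[X]$. Letting $\GC^n$ act on $\AC^n$ by coordinatewise scaling and defining $\mathbb{T}\to\GC^n$ through the lattice map $\Z^n\to M,\ e_j\mapsto w_j$ makes $\iota$ equivariant; this homomorphism of tori is a closed immersion because the $w_j$ generate $M$ as a group (effectiveness of the action forces the weights of $\C[X]$ to generate $M$, and each such weight is a $\Z_{\ge 0}$-combination of the $w_j$), which in particular gives $n\ge\mathrm{rank}(M)=\dim(\mathbb{T})$. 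Finally I define $\sigma'$ on $\AC^n$ and $\tau'$ on $\GC^n$ by the permutation $x_j\mapsto x_{j^*}$ taken antilinearly; since $j\mapsto j^*$ is an involution this is an $\R$-structure on $\AC^n$ restricting to an $\R$-group structure $\tau'$ on the open subtorus $\GC^n$, which is (i). The relation $\sigma^*(g_j)=g_{j^*}$ gives $\iota^*\circ\sigma'^*=\sigma^*\circ\iota^*$, so $(X,\sigma)\hookrightarrow(\AC^n,\sigma')$ is a closed $\R$-subvariety and the immersion is $(\mathbb{T},\tau)$-equivariant, which is (iii); and $w_{j^*}=\tilde\tau(w_j)$ says exactly that $\Z^n\to M$ intertwines the permutation involution with $\tilde\tau$, so $(\mathbb{T},\tau)\hookrightarrow(\GC^n,\tau')$ is a closed $\R$-subgroup, which is (ii).

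I expect the main obstacle to be the simultaneous compatibility demanded of the embedding, which is concentrated entirely in the choice of basis: it must be homogeneous (for the downgrading and $\mathbb{T}$-equivariance), it must generate $\C[X]$ with weights spanning $M$ (for closedness of both immersions), and it must be carried by $\sigma^*$ to a mere permutation of coordinates (so that the induced structure on $\AC^n$ is a permutation, hence quasi-split, one — precisely the class that extends from $\GC^n$ to the affine space). The single point reconciling all three is the existence of a $\sigma^*$-fixed basis on each $\tilde\tau$-invariant weight space; this is also what realizes $\mathbb{T}$, even with $\s^1$-factors, inside a quasi-split ambient torus, reflecting the embedding of an arbitrary real torus into a quasi-split one.
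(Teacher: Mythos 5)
Your proof is correct and takes essentially the same route as the paper: both arguments produce a finite set of homogeneous algebra generators of $\C[X]$ that $\sigma^{\sharp}$ permutes (the paper simply adjoins the $\sigma^{\sharp}$-images of a homogeneous generating set, which is automatically stable as a set, while you take the span and adapt a basis, using a real form on each $\tilde{\tau}$-fixed weight space), then embed $(X,\sigma)$ into $\AC^n$ via these generators, equip $\AC^n$ and $\GC^n$ with the antilinear coordinate-permutation structure, and realize $(\mathbb{T},\tau)\hookrightarrow(\GC^n,\tau')$ through the surjection of character lattices $\Z^n\to M$, $e_j\mapsto w_j$, whose $\Gamma$-equivariance is exactly your relation $w_{j^*}=\tilde{\tau}(w_j)$. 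The differences are cosmetic, and your justifications of closedness and equivariance match the paper's.
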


The immersion $(\mathbb{T}, \tau) \hookrightarrow (\GC^n, \tau')$ induces an $\R$-group structure $\tau_Y$ on the  quotient torus $\mathbb{T}_Y := \GC^n/\mathbb{T}$.   This $\R$-group structure   $\tau_Y$ induces in turn an $\R$-structure $\sigma_Y$ on the semi-projective variety $Y$ mentioned in Theorem  AH \ref{thmAltintro2}. We  downgrade $\Gal(\C/\R)$-equivariantly   the $\GC^n$-action on $\AC^n$ to a $\T$-action,  which is a key ingredient in the proof of the following result (which is the real analogue of Theorem AH \ref{thmAltintro2}):
\begin{theoremA}[Theorem \ref{ResultT2}] \label{ResultT2intro}      
\textsl{Let  $ \omega \subset M_{\Q}$ be  the weight cone of the $\mathbb{T}$-action on $X$.    There exists  a   normal semi-projective $\R$-variety   $(Y, \sigma_Y)$,    a proper polyhedral divisor  $\D$   on $Y$, and  a  {monoid}  {morphism} $h :   \omega  \cap M \to \C(Y)^*$ such that  
$$ \forall  m \in \omega  \cap M, \ \sigma_Y^*(\D(m)) = \D(\tilde{\tau}(m)) + \Div_Y(h(\tilde{\tau}(m))) \text{ \ \  and \ \ } h(m) \sigma_Y^{\sharp}(h(\tilde{\tau}(m)))=1,$$
and such that there is an  isomorphism of $\R$-varieties between $(X, \sigma)$ and $({X[Y, \D]}, \sigma_{X[Y, \D]})$. } 
\end{theoremA}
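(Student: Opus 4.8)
The plan is to make the Altmann--Hausen downgrading of \cite[\S 11]{Alt} Galois-equivariant and then to descend the resulting combinatorial data to $\R$. First I would invoke Proposition~\ref{GaloisToricDowngradingintro} to fix an integer $n\geq\dim(\T)$, an $\R$-group structure $\tau'$ on $\GC^n$ extending to an $\R$-structure $\sigma'$ on $\AC^n$, and closed immersions $(\T,\tau)\hookrightarrow(\GC^n,\tau')$ and $(X,\sigma)\hookrightarrow(\AC^n,\sigma')$ that are simultaneously $\T$-equivariant and $\Gal(\C/\R)$-equivariant. This exhibits the given action as the restriction to $\T$ of the diagonal $\GC^n$-action, compatibly with all the real structures. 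I would then run the toric downgrading on this embedding: the subtorus inclusion $\T\hookrightarrow\GC^n$ yields an exact sequence of character lattices $0\to M_Y\to M'\to M\to 0$ with $M'=\Z^n$ and $M_Y$ the character lattice of $\T_Y:=\GC^n/\T$, and the downgrade of the $\GC^n$-action to the $\T$-action produces the datum $(\omega,Y,\D)$ of Theorem~AH~\ref{thmAltintro2}, where $Y$ is a semi-projective toric variety for $\T_Y$, the prime components $D_i$ of $\D$ are toric, and $\C[X]\cong A[Y,\D]$ as graded $\C$-algebras.

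The second step is to descend the real structure to $Y$. Since $\T$ is $\Gal(\C/\R)$-stable inside $(\GC^n,\tau')$, the lattice involution $\tilde{\tau}'$ on $M'$ preserves $M_Y=\ker(M'\to M)$, and hence induces $\tilde{\tau}$ on $M$ and an involution $\tilde{\tau}_Y$ on $M_Y$; dually this is an $\R$-group structure $\tau_Y$ on $\T_Y$. As $\sigma'$ is a toric $\R$-structure compatible with the $\T_Y$-action via $\tau_Y$, it descends through the quotient construction defining $Y$ to an $\R$-structure $\sigma_Y$, for which $Y$ is semi-projective over $\R$ and $\sigma_Y^*$ permutes the toric prime divisors $D_i$ according to $\tilde{\tau}_Y$.

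It remains to produce $h$ and to verify the functional equation, and this is the main obstacle. The $\R$-structure $\sigma$ on $X$ gives an anti-regular ring involution $\sigma^*$ of $\C[X]$ which, by $(\T,\tau)$-equivariance, carries the weight space $\C[X]_m$ to $\C[X]_{\tilde{\tau}(m)}$. Transporting this across the isomorphism $\C[X]_m\cong\H^0(Y,\mathcal{O}_Y(\D(m)))$ and realizing both sides as $\C$-subspaces of $\C(Y)$, I would note that $\sigma_Y^*$ maps $\H^0(Y,\mathcal{O}_Y(\D(m)))$ anti-linearly onto $\H^0(Y,\mathcal{O}_Y(\sigma_Y^*\D(m)))$, while the algebra identification simultaneously lands it in $\H^0(Y,\mathcal{O}_Y(\D(\tilde{\tau}(m))))$; comparing these for all $m$ yields a function $h(\tilde{\tau}(m))\in\C(Y)^*$ with $\sigma_Y^*\D(m)=\D(\tilde{\tau}(m))+\Div_Y(h(\tilde{\tau}(m)))$. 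The delicate points are precisely that the discrepancy is principal and not merely linearly equivalent, which I expect to follow from the explicit toric description of $\D$ on the ambient $\AC^n$, where $h$ is literally a character, and that $m\mapsto h(m)$ is a monoid morphism, which I would deduce from the multiplicativity of $\sigma^*$, the additivity of $\tilde{\tau}$, and the fact that multiplication of semi-invariants corresponds to multiplication of sections inside $\C(Y)$.

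Finally I would conclude. With the functional equation established, Theorem~\ref{ResultTintro} equips $X[Y,\D]$ with an $\R$-structure $\sigma_{X[Y,\D]}$ carrying a $(\T,\tau)$-action. Because the embedding and downgrading were carried out $\Gal(\C/\R)$-equivariantly, the graded isomorphism $\C[X]\cong A[Y,\D]$ of Theorem~AH~\ref{thmAltintro2} intertwines $\sigma^*$ and $\sigma_{X[Y,\D]}^*$, so it is an isomorphism of $\R$-algebras; by the equivalence of categories of Proposition~\ref{eqcat}, this gives the desired isomorphism of $\R$-varieties between $(X,\sigma)$ and $(X[Y,\D],\sigma_{X[Y,\D]})$.
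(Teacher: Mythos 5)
Your overall strategy coincides with the paper's: use Proposition \ref{GaloisToricDowngrading} to obtain a simultaneously $(\T,\tau)$- and $\Gamma$-equivariant closed immersion $(X,\sigma)\hookrightarrow(\AC^n,\sigma')$, equip the downgrading quotient with the $\R$-structure coming from the $\Gamma$-stable fan (Huruguen's result), and produce $h$ by comparing the antiregular involution with the graded identification, with principality and multiplicativity coming from the fact that on the toric side $h$ is a ratio of characters. This is exactly what the paper's Steps 0, 1 and 4 do, and the paper resolves the two ``delicate points'' you flag by one explicit formula: choosing a (not necessarily $\Gamma$-equivariant) lattice section $s^*:M\to M'$, setting $u(m)=\chi^{s^*(m)}$ and $h'(m)=\sigma^{\sharp}\bigl(u(\tilde{\tau}(m))\bigr)/u(m)$, one checks that $\tilde{\tau}'\circ s^*\circ\tilde{\tau}-s^*$ takes values in $M_Y=\ker F^*$, so $h'(m)$ is literally a character of $\T_Y$, hence a degree-zero rational function; both the monoid property and the principality of the discrepancy are then automatic.

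The genuine gap is your identification of the Altmann--Hausen quotient of $X$ with the toric quotient of $\AC^n$. You assert that the downgrading produces a datum $(\omega,Y,\D)$ in which $Y$ is a semi-projective toric variety for $\T_Y$, the prime components $D_i$ are toric, and $\C[X]\cong A[Y,\D]$; this is true only when $X=\AC^n$. For general $X$ the correct quotient is the normalization $\tilde{Y}_X$ of the closure $Y_X$ of $\pi(X\cap\GC^n)$ inside the toric $Y$ (where $\pi:\GC^n\twoheadrightarrow\T_Y$), which is not toric, and whose relevant prime divisors are pullbacks of toric ones rather than toric themselves. Consequently three steps of the paper's proof are absent from yours: (a) the $\R$-structure must be descended from $Y$ to $Y_X$ (using $\Gamma$-equivariance of $\pi$ and of the inclusion $X\hookrightarrow\AC^n$) and then lifted to $\tilde{Y}_X$ via the universal property of normalization; (b) the pp-divisor $\D$ must be pulled back to a pp-divisor $\D_X$ on $\tilde{Y}_X$ (the paper invokes \cite[Proposition 8.1]{Alt03}) and the graded isomorphism $A[\tilde{Y}_X,\D_X]\cong\C[X]$ established through the $\Gamma$-equivariant identification $\C(\tilde{Y}_X)\cong\C(X)_0$ (the paper's Steps 2 and 3); and (c) the function $h$, constructed on the toric $Y$ where it is a character, must be transported to $h_X$ on $\tilde{Y}_X$ via the surjection $\C[\AC^n]\twoheadrightarrow\C[X]$ and the functional equation re-derived there. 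None of these steps would fail --- they are precisely what the paper carries out --- but without them your argument only proves the theorem when $X$ is the ambient affine space, and several of your intermediate claims (toric $Y$, toric $D_i$, $\sigma_Y^*$ permuting toric divisors on the quotient of $X$) are false as stated in the general case.
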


\medskip

In the case where the real torus $T$ is   quasi-split,  our presentation simplifies.  Indeed,  if  $(X, \sigma)$ is endowed with a $T$-action and if $(Y, \sigma_Y)$ is the variety mentioned in Theorem \ref{ResultT2intro}, we see in Proposition  \ref{ResultQS} that there exists a proper polyhedral divisor $\D$ on $Y$ such that  $\sigma_Y^*(\D(m)) = \D(\tilde{\tau}(m))$ for all $m \in \omega  \cap M$; i.e we can \textit{take} $h=1$. From this result, we recover the Altmann-Hausen presentation for   $\GR^n$-actions.  On the other hand, this simplification is not always possible for $\mathbb{S}^1$-actions: see  \S \ref{SubsectionC} for details and examples. In this case we recover the presentation for $\mathbb{S}^1$-actions given by Dubouloz and Liendo in \cite{Lien}.

\medskip

After fixing our notation, the article is structured as follows. 

In \S \ref{SubsectionGD} we  recall well-known facts about $\R$-structures on $\C$-varieties,  and in \S \ref{SubsectionRT} we see that tori inclusions corresponds to certain short exact sequences of  lattices. Basic results on real torus actions and  examples   of real torus actions on   affine toric $\R$-varieties are  given in \S \ref{SubsectionRTA} and \S \ref{SubsectionATV}. 
 
In \S \ref{SectionCAHP}, we briefly explain Altmann-Hausen's theory in view of extending it to the real case. We start by introducing polyhedral divisors in \S \ref{SubsectionTPPD}, and we recall the main results of \cite{Alt} in \S \ref{SubsectionCAH}.  
 
In \S \ref{SectionRAHP},  after proving  Proposition \ref{GaloisToricDowngradingintro} in \S \ref{SubsectionETD}, we prove our main results in \S \ref{SubsectionRAH}: Theorems \ref{ResultTintro} and \ref{ResultT2intro}. Then,  we give several    cohomological results used to simply the Altmann-Hausen presentation in the case where the acting torus is quasi-split.

In \S \ref{SectionEx},  we give examples of  $\GR$-actions (see   \S \ref{SubsectionGR}), $\RC(\GC)$-actions (see   \S \ref{SubsectionWR}) and $\s^1$-actions (see   \S \ref{SubsectionC}).

\medskip

\medskip

\noindent \textbf{Acknowledgments.}
The author  is grateful to Charlie Petitjean for stimulating discussions  about   Altmann-Hausen theory on the field of complex numbers.  The author would like to thank the anonymous referee for her/his comments and suggestions that helped him to improve the quality of the article.

\bigskip

\section{Notation}

Throughout  the entire paper, we call a \textit{$\C$-variety} a separated integral scheme of finite type over $\C$, and an \textit{$\R$-variety}   a separated  geometrically integral scheme of finite type over  $\R$.  We denote by	$\Gamma:= \{ id, \gamma\}$ the Galois group of the field extension $\C/\R$, it is isomorphic to $\Z/2\Z$. 
The group of regular automorphisms of a $\C$-variety $X$ is denoted by $\Aut(X)$, and the group of regular group automorphisms of a complex algebraic group $G$ is denoted by $\Aut_{gr}(G)$. 

\smallskip

From here on,   $N$ denotes a lattice, i.e.~a finitely generated free abelian group,  and $M := \Hom_{\Z}(N, \Z)$ denotes its dual lattice. The associated $\Q$-vector spaces are denoted by $N_{\Q} := N \otimes_{\Z} \Q$ and $M_{\Q} := M \otimes_{\Z} \Q$ respectively, and the the corresponding pairing by: 
\begin{equation*}
 M \times N \to \Z, \ \ \ (u,v) \mapsto \langle u,v \rangle := u(v).
\end{equation*}
Let us recall some results of \cite[\S 1.2]{Fult}. Let $N'$ be  a  lattice,  and let $f : N \to N'$ be a lattice homomorphism. It induces a  unique $\Q$-linear map  $N_{\Q} \to N'_{\Q}$, also denoted by $f$. 
 A subset $\omega_N \subset N_{\Q} $ is called a \textit{convex polyhedral cone} if there exists  a finite set  $S \subset N_{\Q}$ such that 
\begin{equation*}
\omega_N  = \text{Cone}(S) := \left\{ \sum_{v \in S} \lambda_v v \ | \ \lambda_v \in {\Q}_{\geq 0} \right\} \subset N_{\Q}. 
\end{equation*}
A cone $\omega_N$ is   \textit{strongly convex}  if $\omega_N \cap (-\omega_N) = \{ 0 \}$. 
For us, a \textsl{cone} in $N_{\Q}$ is always a convex polyhedral cone.
 The dual cone of $\omega_N$ is defined by
\begin{equation*}
\omega_N^{\vee} := \{ u \in M_{\Q} \ | \  \forall v \in \omega_N, \ \langle u | v \rangle \geq 0  \};
\end{equation*}
it is a cone in $M_{\Q}$. Let  $\omega_N$ be a cone in $N_{\Q}$.   A \textit{face} $\tau_N$ of $\omega_N$ is given by $ \tau_N = \omega_N \cap u^{\perp}$, for some $u \in \omega_N^{\vee}$, where
$u^{\perp}    := \left\{ v \in \omega_N \ | \ \forall u \in \omega_N^{\vee},  \ \langle u,v \rangle  = 0    \right\} $. 
Recall that a face of a cone is a cone. 
The relative interior  $\text{Relint}(\omega_N )$ of a cone $\omega_N  $ is obtained  by removing all   proper   faces  from $\omega_N$. 

 \smallskip
   
A \textit{quasifan} $\Lambda$ in $ N_{\Q}$ (or in $M_{\Q}$) is a finite collection of cones in $ N_{\Q}$ (or in $M_{\Q}$)  such that,  for any $\lambda \in \Lambda$, all the faces   of $ \lambda$ belong to $\Lambda$,  and for any   $\lambda_1, \lambda_2 \in \Lambda$, the intersection $\lambda_1 \cap \lambda_2$ is a face of both $\lambda_i$. 
The \textit{support} of a quasifan is the union of all its cones. A quasifan is called a \textit{fan} if all its cones are strongly convex.

\smallskip

A complex torus is an affine algebraic group isomorphic to $\GC^n$. There is a one-to-one correspondence between lattices and complex tori. To a lattice $M \cong \Z^n$, we associate the affine variety $\Spec(\C[M])$, with $\C[M] := \{ \sum_{m \in M} c_m \chi^m \ | \   c_m \in \C \}$ and where $\chi^m $ are indeterminate such that $\chi^{m+m'}=\chi^m  \chi^{m'}$. It is a complex torus isomorphic to $\GC^n$. Conversely, to a complex torus $\T$ isomorphic to $\GC^n$, we associate its character lattice $M:= \Hom_{gr}(\T, \GC)$. It is isomorphic to $\Z^n$.  Let us  recall that $\Aut_{gr}(\GC^n) \cong \GL_{n}(\Z)$.

\smallskip

The action of  a complex torus $\T$ on a $\C$-variety $X$ is called \textit{effective} if the neutral element of $\T$ is the only element acting trivially on $X$. 
  In this paper, we only consider effective torus actions. Let $X$  be a $\C$-variety   endowed with an    action of the torus $\T=\Spec(\C[M])$. The \textit{weight} \textit{monoid} of this action is $S := \{ m \in M \ | \ \C[X]_m \neq \{0 \} \}$ and the cone $\omega_M$ of $M_{\Q}$ spanned by the weight monoid $S$ is called the \textit{weight} \textit\textit{cone}. The algebra $\C[X]$ is $M$-graded: $\C[X] = \bigoplus_{m \in \omega_M \cap M} \C[X]_m$. 
There is a bijective correspondence between the $\T$-actions on $X$ and the $M$-gradings on $\C[X]$  \cite[\S 2.1]{KamRuss}.

 \smallskip

We recall  some definitions and results useful for the proof of Lemma \ref{SectionGalois}. Let $(G,\cdot)$ be a group. A $G$-module is an abelian group $(M, +)$ endowed with an action $(g, m) \mapsto g \cdot m$ of $G$  such that the induced map $\varphi_{g} : m \mapsto g \cdot m$ is an abelian group automorphism. Recall that this data is equivalent to a left  module $(M,+)$ over the ring $\Z[G]$.  Indeed, if $M$ is a module over the ring $\Z[G]$, we define a $G$-module structure on $M$ via $g \cdot m :=  \chi^g m$ for all $(g, m) \in G \times M$. Conversely, if $M$ is a $G$-module, we construct a $\Z[G]$-module structure on $M$ via $(\sum_{g \in G} n_{g} \chi^g) m := \sum_{g \in G} n_{g}   g \cdot m$.

\section{Galois descent $\C / \R$ and algebraic tori} \label{SectionGDAT}

We  recall basic definitions and well-known facts about $\R$-structures on $\C$-varieties and $\R$-group structures on complex algebraic groups in view of studying torus actions on $\R$-varieties. 
See \cite[\S3.1.3]{Ben} and  \cite{BorelSerre}.

\subsection{Galois descent $\C / \R$} \label{SubsectionGD}

Let us briefly recall the classical correspondence between quasi-projective $\R$-varieties and quasi-projective $\C$-varieties endowed with an $\R$-structure. Every $\C$-variety $X$ can be viewed as an $\R$-scheme   via the composition of its structure morphism $ X \to \Spec(\C)$ with the morphism $\Spec(\C) \to \Spec(\R)$ induced by the inclusion $\R \hookrightarrow \C = \R[i]/(i^2+1)$. The Galois group $\Gamma$ acts on $\Spec(\C)$ by the usual complex conjugation $z \mapsto \overline{z}$.

\begin{definition}      
\begin{enumerate}[leftmargin=0.75cm, label=(\roman*)]
\item An  \textit{$\R$-form} of a $\C$-variety $X$ is an $\R$-variety $X_0$ together with an isomorphism $X_0 \times_{\Spec(\R)} \Spec(\C) \cong X$ of $\C$-varieties. By abuse of notation  we will often write: $X_0$ is an $\R$-form of $X$ instead of $(X_0,\cong)$. 
\item An \textit{$\R$-structure} $\sigma$ on a $\C$-variety $X$  is an antiregular involution, i.e, an involution of  $\R$-scheme    $\sigma : X \to X$ which makes the following diagram commute:

\begin{center}
\begin{tikzpicture}
  \matrix (m) [matrix of math nodes,row sep=1em,column sep=4em,minimum width=2em]
  {
          X       &                          &     X                      \\
    \text{Spec}(\C) &                          &    \text{Spec}(\C)    \\
};
  \path[-stealth]
    (m-1-1) edge node [above] {$\sigma    $} (m-1-3)
           edge  (m-2-1)
    (m-2-1) edge node [above] { \small{  $\Spec(  z \mapsto \bar{z} ) $}   } (m-2-3)
		(m-1-3) edge  (m-2-3);
\end{tikzpicture}
\end{center}
\item Two $\R$-structures $\sigma$ and $\sigma'$ on $X$ are \textit{equivalent} if there exists   $\varphi \in \Aut(X) $ such that $\sigma' = \varphi \circ \sigma  \circ \varphi^{-1}$.
\item An   \textit{$\R$-morphism} between two $\C$-varieties $X$ and $X'$ with $\R$-structures $ \sigma $ and $  \sigma' $ is a morphism of $\C$-varieties $f : X \to X'$ such that $\sigma' \circ f = f \circ \sigma$ as morphisms of $\R$-schemes.
\end{enumerate}
\end{definition}

If a quasi-projective $\C$-variety $  X $ is endowed with an $\R$-structure $\sigma$, then the quotient $  X/\langle \sigma \rangle $ exists in the category of $\R$-varieties and the structure morphism $  X \to \Spec(\C)$ descends to a morphism $X/\langle \sigma \rangle  \to \Spec(\R)$ making $X/\langle \sigma \rangle $ into an $\R$-variety   such that $X \cong  (X/\langle \sigma \rangle)_{\C} $.   
If $f : (X, \sigma  ) \to (X', \sigma' ) $ is an $\R$-morphism between quasi-projective $\C$-varieties, and if $\pi' : X' \to X'/\langle \sigma' \rangle$ denotes the quotient morphism, we obtain from the  invariant morphism $\pi' \circ f : X \to X'/\langle \sigma' \rangle$   a morphism $f_0 : X /\langle \sigma  \rangle \to  X'/\langle \sigma' \rangle$ of $\R$-varieties.

\begin{proposition} \label{eqcat} 
The functor $(X, \sigma) \mapsto   X/\langle \sigma \rangle$ induces an equivalence of categories between the  category of pairs $(X, \sigma)$ consisting of a   quasi-projective $\C$-variety $X$ endowed with an $\R$-structure $\sigma$ and the category of quasi-projective $\R$-varieties. Moreover, $\sigma$ is equivalent to $\sigma'$ if and only if $X/\langle \sigma \rangle$ is $\R$-isomorphic to $X/\langle \sigma'\rangle$. 
\end{proposition}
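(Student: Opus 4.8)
The plan is to establish the equivalence of categories by exhibiting quasi-inverse functors and verifying the standard requirements (fully faithful plus essentially surjective), then handle the "moreover" clause about equivalence classes separately. The functor $F : (X,\sigma) \mapsto X/\langle\sigma\rangle$ is already described in the text, together with its action on morphisms via the invariant-morphism construction $f \mapsto f_0$. I would begin by writing down explicitly the candidate quasi-inverse $G$, which sends a quasi-projective $\R$-variety $X_0$ to the pair $(X_0 \times_{\Spec(\R)} \Spec(\C),\, \sigma_0)$, where $\sigma_0 := \mathrm{id}_{X_0} \times \Spec(z \mapsto \bar z)$ is the $\R$-structure induced by the Galois action on the second factor. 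On morphisms, $G$ sends $g : X_0 \to X_0'$ to $g \times \mathrm{id}$, which is manifestly an $\R$-morphism since the conjugation acts only on the $\Spec(\C)$ factor.

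**Next I would verify** the two natural isomorphisms $G \circ F \cong \mathrm{id}$ and $F \circ G \cong \mathrm{id}$. The composite $F \circ G$ sends $X_0$ to $(X_0)_{\C}/\langle\sigma_0\rangle$, and the quotient of a base-changed variety by the canonical Galois involution recovers $X_0$; this is the content of faithfully flat Galois descent for the extension $\C/\R$, which I am entitled to invoke since the paper cites \cite{Ben, BorelSerre} for exactly this correspondence. For $G \circ F$, given $(X,\sigma)$ the text already records the isomorphism $X \cong (X/\langle\sigma\rangle)_{\C}$, and one checks that under this isomorphism the canonical conjugation $\sigma_0$ on the right corresponds to the original $\sigma$; naturality in $(X,\sigma)$ follows by chasing the defining relation $\sigma' \circ f = f \circ \sigma$ through the descent. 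The quasi-projectivity hypothesis is precisely what guarantees the quotient $X/\langle\sigma\rangle$ exists as a scheme, so it is used in an essential way and I would flag where.

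**For the final equivalence-class statement,** I would argue that if $\sigma' = \varphi \circ \sigma \circ \varphi^{-1}$ for some $\varphi \in \Aut(X)$, then $\varphi$ descends to an $\R$-isomorphism $X/\langle\sigma\rangle \xrightarrow{\sim} X/\langle\sigma'\rangle$, because conjugating the involution by $\varphi$ makes $\varphi$ an $\R$-morphism between the two $\R$-structures, and $F$ carries it to the desired isomorphism of quotients. Conversely, an $\R$-isomorphism $X/\langle\sigma\rangle \cong X/\langle\sigma'\rangle$ base-changes to an isomorphism of the associated $\C$-varieties intertwining the two canonical conjugations, which after transporting back along $X \cong (X/\langle\sigma\rangle)_{\C}$ supplies the required $\varphi \in \Aut(X)$ conjugating $\sigma$ to $\sigma'$.

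**The main obstacle** I anticipate is not any single hard step but rather the bookkeeping of making the descent isomorphisms natural and checking that the conjugation involutions match up under all the identifications; the genuine mathematical input—that affine (hence quasi-projective) Galois descent along $\C/\R$ is effective—is classical and may be cited rather than reproved. I would therefore keep the exposition brief, emphasizing the construction of $G$ and the compatibility of involutions, and refer to \cite{Ben, BorelSerre} for the descent theory itself.
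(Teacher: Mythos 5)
Your proposal is correct and follows essentially the same route as the paper's own (very brief) proof: both exhibit the quasi-inverse functor as base change $X_0 \mapsto (X_0 \times_{\Spec(\R)} \Spec(\C), \sigma_0)$ with the canonical conjugation on the second factor, acting on morphisms by $g \mapsto g \times \mathrm{id}$, and appeal to classical Galois descent for $\C/\R$ for the rest. In fact your plan is more thorough than the paper's sketch, which does not spell out the natural isomorphisms or the ``moreover'' clause at all; your treatment of the equivalence-class statement via transporting automorphisms along $X \cong (X/\langle\sigma\rangle)_{\C}$ is exactly the right way to fill that in.
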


Using this equivalence, we often write $(X, \sigma)$ to refer to an algebraic $\R$-variety.

\begin{proof}
We give a sketch of the proof for the sake of completeness.  If  $(X_0, \cong)$ is an $\R$-form of $X$, the $\C$-variety 
$(X_0)_{\C} := X_0 \times_{\Spec(\R)} \Spec(\C)$   is endowed with a canonical $\R$-structure   given by the action of $\Gamma$ by complex conjugation on the second factor, this gives an $\R$-structure $\sigma$ on $X \cong (X_0)_{\C}$. 
If $(X_0, \cong)$ and $(X'_0, \cong)$ are $\R$-forms of $X$ and $X'$ respectively, and if $f_0 : X_0 \to X'_0$ is a morphism of $\R$-varieties, then $f_0 \times id : (X_0)_{\C} \to   (X'_0)_{\C}$ is a morphism of $\C$-varieties, so we obtain a  morphism $f : X \to X'$ such that $f \circ \sigma = \sigma' \circ f$.   
\end{proof}

We have similar definitions and properties for affine algebraic groups.  

\begin{definition} 
\begin{enumerate}[leftmargin=0.75cm, label=(\roman*)]
\item Let $G$ be a complex algebraic group. A real algebraic group $G_0$ together with an isomorphism $G_0 \times_{\Spec(\R)} \Spec(\C) \cong G$ is called an   \textit{$\R$-form} of $G$. 
\item  An  \textit{$\R$-group} \textit{structure} $\tau$ on a complex algebraic group $G$  is an $\R$-structure $\tau : G \to G$ such that the multiplication $G \times G \to G$, the inverse $G \to G$ and the unity $\Spec(\C) \to G$ are $\R$-morphisms. 
\item Two $\R$-group structures $\tau$ and $\tau'$ on $G$ are \textit{equivalent} if there exists   $\varphi \in \Aut_{gr}(G) $ such that $\tau' = \varphi \circ \tau \circ \varphi^{-1}$.  
\item An   \textit{$\R$-morphism} between two complex algebraic groups $G$ and $G'$ with $\R$-structures $ \tau $ and $  \tau' $ is a morphism of complex algebraic groups $f : G \to G'$ such that $\tau' \circ f = f \circ \tau$ as morphisms of $\R$-schemes.
\end{enumerate} 
\end{definition}

If $ G  $ is a complex affine algebraic group endowed with an $\R$-group structure $\tau$, then the quotient scheme $ G_0 := G/ \langle \tau \rangle$ is a real algebraic group which satisfies $(G_0)_{\C}  \cong G$ as complex algebraic groups.

\begin{remark}  
There is an equivalence between the category of pairs $(G, \tau)$ consisting of a complex affine algebraic group endowed with an $\R$-group structure, and  the category of real affine algebraic groups. This induces a one-to-one correspondence between the $\R$-forms of G, up to isomorphism in the category of real algebraic groups, and the equivalence classes of $\R$-group structures on G.
\end{remark}

\subsection{Real tori} \label{SubsectionRT}

We define real tori and we recall that  any real torus is isomorphic to a product of copies of three \textit{elementary} real tori.

\begin{definition}      
A \textit{real}  \textit{torus}  $T $ is a real affine algebraic group   such that $T_{\C} $ is a   complex torus.   It is called a \textit{split torus} if $T \cong \GR^n$  for some integer $n$.
\end{definition}

The torus $\GC$ has two non isomorphic $\R$-forms: the real split torus $\GR$ and the real circle $\mathbb{S}^1 := \Spec \left( {\mathbb{R}[x , y]}/{(x^2+y^2-1)} \right)$. Since $\Aut_{gr}(\GC)= \{ id, -id \} $, the equivalence class of an $\R$-group structure on $\GC$ has only one element. So,  {the}   $\R$-group structures on $\GC$ associated to $\GR$ and $\mathbb{S}^1$ are respectively:
\begin{equation*}
{\tau_0} :   
z   \mapsto  \overline{z} \ \ \ \text{ and } \ \ \ {\tau_1} :  
z   \mapsto  \overline{z}^{\ -1}.
\end{equation*}
The group structure on $\s^1$ is given by
$$(x,y) \cdot (x', y') =(xx' - yy', xy'+yx').$$
The \textit{{Weil} {restriction}} of $\GC$ is $\RC(\GC) := \Spec \left(  {\R[x_1,  y_1, x_2, y_2]}/{(x_1y_1-x_2y_2-1, x_2y_1+x_1y_2)} \right)$.  It is an $\R$-form of $\GC^2$.  An  $\R$-group  structure on $\GC^2$ associated to $\RC(\GC)$ is:
\begin{equation*}
{\tau}_{2}  :    (z,w)   \mapsto (  \overline{w} , \overline{z } ).
\end{equation*}
The group structure on $\RC(\GC)$ is given by
$$(x_1, y_1, x_2,  y_2) \cdot (x_1', y'_1, x'_2, y_2') =(x_1x_1' - x_2x'_2, y_1y_1'-y_2y_2', x_1x_2' + x_2x_1', y_1y_2'+y_2y_1').$$
By abuse, we call {Weil} {restriction} any real torus isomorphic to $\RC(\GC)$. Here, $\Aut_{gr}(\GC^2)  \cong \GL_2(\Z)$,  so the equivalence class of an $\R$-group structure on $\GC^2$ has infinitely many elements.  
 For instance   
\begin{equation*}
{\tau}_{2}'  :    (z,w)   \mapsto (  \overline{w}^{\ -1} , \overline{z }^{\ -1} )     \text{ and } 
{\tau}_{2}''  :   (z,w)   \mapsto (  \overline{z}^{\ -1} \overline{w } , \overline{w } )    
\end{equation*}
 are   $\R$-group structures equivalent to $\tau_2$.

\begin{remark} An $\R$-group structure $\tau$ on a complex torus $\T$  induces  lattices involutions $\tilde{\tau}$ and $\hat{\tau}$ on $M:= \Hom_{gr}(\T, \GC)$  and   $N:= \Hom_{gr}(  \GC, \T)$ respectively. For $(\GC^2, \tau_2)$,   the involutions $\tilde{\tau}_{2}$ and $\hat{\tau}_{2}$ are both given by  $\Z^2 \to \Z^2,  (k,l)   \mapsto (  l , k ) $.  For $\GR$, $\tilde{\tau}_{0}$ and $\hat{\tau}_{0}$ are both given by  $id : \Z  \to \Z$, and for $\mathbb{S}^1$, $\tilde{\tau}_{1}$ and  $\hat{\tau}_{1}$ are both given by  $-id : \Z  \to \Z$.
\end{remark}

These three \textit{elementary} real tori form the building blocks of every real torus, that is:

\begin{proposition} \cite[Proposition 1.5]{Moser}. \label{RealGroupStructureTorus}  
Every $\R$-group structure on   $\GC^n$ is equivalent to exactly one   $\R$-group structure of the form  $\tau_{0}^{\times n_0} \times \tau_{1}^{\times n_1} \times \tau_{2}^{\times n_{2}}$,   with   $n_0+n_1+2n_{2}=n$.
\end{proposition}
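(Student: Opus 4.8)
The plan is to translate the classification of $\R$-group structures on $\GC^n$ into a purely lattice-theoretic statement, using the dictionary recalled in the excerpt: an $\R$-group structure $\tau$ on $\GC^n$ corresponds to a lattice involution $\tilde{\tau}$ on the character lattice $M \cong \Z^n$, and two structures are equivalent precisely when their involutions are conjugate under $\Aut_{gr}(\GC^n) \cong \GL_n(\Z)$. Thus the proposition reduces to classifying involutions of $\Z^n$ up to $\GL_n(\Z)$-conjugacy, i.e.\ to finding normal forms for matrices $A \in \GL_n(\Z)$ with $A^2 = I$.

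First I would set up the $\Z[\Gamma]$-module structure: an involution $\tilde{\tau}$ makes $M$ a module over $\Z[\Gamma] = \Z[\Z/2\Z]$, and equivalence classes of $\R$-group structures correspond to isomorphism classes of such modules that are free of rank $n$ over $\Z$. So the heart of the matter is a classification of finitely generated $\Z[\Z/2\Z]$-modules that are $\Z$-free. The key step is to invoke (or reprove) the classical fact that every such module decomposes as a direct sum of the three indecomposable $\Z$-free $\Z[\Z/2\Z]$-modules: the trivial module $\Z$ (on which $\gamma$ acts by $+1$), the sign module $\Z$ (on which $\gamma$ acts by $-1$), and the regular module $\Z[\Z/2\Z] \cong \Z^2$ (on which $\gamma$ swaps the two basis vectors). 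These three indecomposables correspond exactly to the involutions $\tilde{\tau}_0 = \mathrm{id}$, $\tilde{\tau}_1 = -\mathrm{id}$, and $\tilde{\tau}_2 = \left(\begin{smallmatrix} 0 & 1 \\ 1 & 0 \end{smallmatrix}\right)$, hence to the elementary tori $\GR$, $\mathbb{S}^1$ and $\RC(\GC)$. Decomposing $M$ into $n_0$ copies of the first, $n_1$ of the second, and $n_2$ of the third yields the claimed form with $n_0 + n_1 + 2 n_2 = n$.

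For existence of the decomposition I would argue directly: given an involution $\tilde{\tau}$, consider the $\Z$-sublattices $M^+ = \ker(\tilde{\tau} - \mathrm{id})$ and $M^- = \ker(\tilde{\tau} + \mathrm{id})$; their sum is direct and the quotient $M/(M^+ \oplus M^-)$ is annihilated by $2$, so one carefully extracts regular summands to account for the $2$-torsion obstruction before splitting off the remaining $\pm 1$-eigenlattices. The uniqueness of the triple $(n_0, n_1, n_2)$ follows because it is determined by conjugacy invariants: the trace of $\tilde{\tau}$ gives $n_0 - n_1$, the rank $n$ gives $n_0 + n_1 + 2 n_2$, and for instance the rank of $\tilde{\tau} - \mathrm{id}$ over $\mathbb{F}_2$ (or the elementary-divisor structure of $\tilde{\tau} - \mathrm{id}$ and $\tilde{\tau}+\mathrm{id}$) pins down $n_2$, so no two distinct triples can give conjugate involutions.

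The main obstacle I expect is the existence half, specifically the $2$-torsion phenomenon: over $\Q$ every involution diagonalizes into $\pm 1$ blocks, but over $\Z$ this fails, and the failure is measured precisely by the regular summands $\Z[\Z/2\Z]$. Handling the lattice $M/(M^+ \oplus M^-)$ correctly — showing that exactly $n_2$ regular summands must be split off and that what remains genuinely splits into rank-one trivial and sign pieces — is the delicate point, and it is exactly the content of the integral representation theory of the cyclic group of order two (a theorem going back to Diederichsen/Reiner). In practice I would cite \cite{Moser} for the full statement, as the excerpt does, rather than reprove the integral representation theorem from scratch.
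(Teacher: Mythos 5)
Your proposal is correct and takes essentially the same route as the paper: the paper offers no argument of its own, simply citing \cite{Moser}, and your reduction of the statement to conjugacy classes of involutions in $\GL_n(\Z)$ — equivalently, isomorphism classes of $\Z$-free $\Z[\Z/2\Z]$-modules, classified by the three indecomposables (trivial, sign, regular) via the classical Diederichsen--Reiner theorem — is precisely the standard argument behind that citation, with a correct uniqueness argument via the trace, the rank, and the $\mathbb{F}_2$-rank of $\tilde{\tau}-\mathrm{id}$. Since you, like the paper, ultimately defer to the reference for the delicate existence step of the integral decomposition, your treatment is consistent with (indeed more detailed than) the paper's.
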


\begin{remark} \label{FracsectionR} Let $(\T , \tau)$ be a subtorus of the real torus $(\GC^n, \tau')$. Let $M:= \Hom_{gr}(\T , \GC)$ and $M':= \Hom_{gr}(\GC^n, \GC)$. The inclusion $\T \hookrightarrow \GC^n$ induces   a surjective lattice homomorphism $M' \to M$.
 Let  ${M}_Y$ be the kernel of this homomorphism, it is  a sublattice of $M'$.  Moreover, the lattice involution $\tilde{\tau}'$ on $M'$ induces a lattice involution $\tilde{\tau}_Y$ on $M_Y$. Let ${\tau}_Y$ be the induced $\R$-group structure on $\T_Y$.  The following  diagram of complex algebraic groups commutes:
\begin{center}
\begin{tikzpicture}
\matrix (m) [matrix of math nodes,row sep=0.7em,column sep=5em,minimum width=2em]  {
	1^{\vphantom{n}}_{\vphantom{n}}   & \T^{\vphantom{n}}_{\vphantom{n}}   & \GC^n & {\T_Y}^{\vphantom{n}}_{\vphantom{n}} := \Spec(\C[M_Y]) & 1    \\
 	1^{\vphantom{n}}_{\vphantom{n}}   & \T^{\vphantom{n}}_{\vphantom{n}}   & \GC^n & {\T_Y}^{\vphantom{n}}_{\vphantom{n}} := \Spec(\C[M_Y]) & 1    \\};
 \path[-stealth]
  	(m-1-1) edge    (m-1-2)
		(m-1-2) edge    (m-1-3)
		(m-1-3) edge    (m-1-4)
		(m-1-4) edge    (m-1-5)
		(m-2-1) edge    (m-2-2)
		(m-2-2) edge    (m-2-3)
		(m-2-3) edge    (m-2-4)
		(m-2-4) edge    (m-2-5)
		(m-1-2) edge node [right] { $ {\tau} $ }   (m-2-2)
		(m-1-3) edge node [right] { $ {\tau'} $ }   (m-2-3)
		(m-1-4) edge node [right] { $ {\tau_Y} $ }   (m-2-4) ;
	\end{tikzpicture}
\end{center} 
There exists an injective morphism $F : N \to N'$ and  a surjective homomorphism $P : N' \to N_Y$, and the following   diagrams of free $\Z$-modules commute:  
\begin{center}
\begin{tikzpicture}
  \matrix (m) [matrix of math nodes,row sep=0.7em,column sep=7em,minimum width=2em]
  {
	0_{\vphantom{Y}}{\vphantom{'}}   & N_{\vphantom{Y}}{\vphantom{'}}   & N'_{\vphantom{Y}}  & N_Y{\vphantom{'}} & 0_{\vphantom{Y}}{\vphantom{'}}     \\
	0_{\vphantom{Y}}{\vphantom{'}}   & N_{\vphantom{Y}}{\vphantom{'}}    & N'_{\vphantom{Y}}  & N_Y{\vphantom{'}} & 0_{\vphantom{Y}}{\vphantom{'}}     \\};
 \path[-stealth]
   	(m-1-1) edge    (m-1-2)
		(m-1-2) edge node [above] {$F$ }   (m-1-3)
		(m-1-3) edge node [above] {$P$ }     (m-1-4)
		(m-1-4) edge    (m-1-5)
		(m-2-1) edge    (m-2-2)
		(m-2-2) edge  node [above] {$F$ }    (m-2-3)
		(m-2-3) edge  node [above] {$P$ }    (m-2-4)
		(m-2-4) edge    (m-2-5)
		(m-1-2) edge node [right] { $  \hat{\tau }$ }   (m-2-2)
		(m-1-3) edge node [right] { $ \hat{\tau}'  $ }   (m-2-3)
		(m-1-4) edge node [right] { $ \hat{\tau_Y}$ }   (m-2-4)
 ;
	\end{tikzpicture}
\end{center}
\begin{center}
\begin{tikzpicture}
  \matrix (m) [matrix of math nodes,row sep=0.7em,column sep=7em,minimum width=2em]
  {
	0_{\vphantom{Y}}{\vphantom{'}}   & M_Y{\vphantom{'}}  & M'_{\vphantom{Y}}  & M_{\vphantom{Y}}   & 0_{\vphantom{Y}}{\vphantom{'}}     \\
	0_{\vphantom{Y}}{\vphantom{'}}   & M_Y{\vphantom{'}}  & M'_{\vphantom{Y}}  & M_{\vphantom{Y}}   & 0_{\vphantom{Y}}{\vphantom{'}}     \\};
 \path[-stealth]
   	(m-1-1) edge    (m-1-2)
		(m-1-2) edge   node [above] {$P^*$ }   (m-1-3)
		(m-1-3) edge   node [above] {$F^*$ }  (m-1-4)
		(m-1-4) edge    (m-1-5)
		(m-2-1) edge    (m-2-2)
		(m-2-2) edge   node [above] {$P^*$ }    (m-2-3)
		(m-2-3) edge   node [above] {$F^*$ }    (m-2-4)
		(m-2-4) edge    (m-2-5)
		(m-1-2) edge node [right] { $ \tilde{\tau_Y}$ }   (m-2-2)
		(m-1-3) edge node [right] { $ \tilde{\tau}'  $ }   (m-2-3)
		(m-1-4) edge node [right] { $ \tilde{\tau}$ }   (m-2-4)
 ;
	\end{tikzpicture}
\end{center}
%\end{notation}
There always exists a section $s^* : M \to M' $, but not always a $\Gamma$-equivariant one. Therefore, we obtain a section $\T_Y \to \GC^n $, but not always a $\Gamma$-equivariant one. In other words, $\GC^n \cong \T \times \T_Y$, but this isomorphism is not always $\Gamma$-equivariant.
\end{remark}

\begin{example} \label{importantex}
The  real tori  $\GR$ and   $\mathbb{S}^1$ are real subtori of $\RC(\GC)$. The inclusion are given by:
$$(\GC, \tau_0)  \to (\GC^2, \tau_2), \ t   \mapsto (t, t)
\ \text{ and } \ 
(\GC, \tau_1)   \to (\GC^2, \tau_2), \  t   \mapsto (t, t^{-1}).$$
We obtain the diagrams of Remark \ref{FracsectionR} with $M'=\Z^2$, $M=\Z$,  $M_Y=\Z$, $F^* =[1,1]$  and $P^* =[1,-1]$ for $\GR$, and   $F^* =[1,-1]$  and $P^* =[1,1]$ for $\mathbb{S}^1$. In these two cases, there does not exist a $\Gamma$-equivariant section since $\tau_2$ is not equivalent to $\tau_1 \times \tau_0$. 
\end{example}

In the case where  $\RC(\GC)$ is a subtorus of a real torus $(\GC^n, \tau')$, we have the following result: 

\begin{lemma}   \label{SectionGalois}  
Let $(\GC^{2q}, \tau_2^{\times q})$ be  a subtorus of $(\GC^n, \tau')$. Let $M:=\Hom_{gr}(\GC^{2q}, \GC)$ and $M':=\Hom_{gr}(\GC^n, \GC)$. Then, there exists a $\Gamma$-equivariant section $s^* : M \to M'$ (i.e. $F^* \circ s^* =id$ and $\tau' \circ s^* = s^* \circ \tau_2^{\times q}$). 
\end{lemma}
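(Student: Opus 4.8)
The plan is to recast the statement as a splitting problem for a short exact sequence of $\Z[\Gamma]$-modules and then exploit the fact that the character lattice of $\tau_2^{\times q}$ is a \emph{free} $\Z[\Gamma]$-module. Applying Remark~\ref{FracsectionR} to the inclusion $(\GC^{2q}, \tau_2^{\times q}) \hookrightarrow (\GC^n, \tau')$, I get induced lattice involutions $\tilde\tau_Y$, $\tilde\tau'$, $\tilde\tau_2^{\times q}$ on $M_Y$, $M'$, $M$ fitting into a commutative diagram whose rows are the exact sequence
\begin{equation*}
0 \longrightarrow M_Y \xrightarrow{\ P^*\ } M' \xrightarrow{\ F^*\ } M \longrightarrow 0 .
\end{equation*}
Viewing each lattice as a $\Z[\Gamma]$-module via its involution (recall from the Notation section that a $\Gamma$-module is the same datum as a $\Z[\Gamma]$-module), this is a short exact sequence of $\Z[\Gamma]$-modules in which $P^*$ and $F^*$ are $\Z[\Gamma]$-linear. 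A $\Gamma$-equivariant section $s^*$, i.e. an $\R$-linear map with $F^* \circ s^* = \mathrm{id}$ and $\tilde\tau' \circ s^* = s^* \circ \tilde\tau_2^{\times q}$, is then precisely a $\Z[\Gamma]$-linear splitting of $F^*$.

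The key step is to identify $(M, \tilde\tau_2^{\times q})$ as a free $\Z[\Gamma]$-module. A single $\tau_2$-factor contributes $\Z^2$ with involution $(k,l) \mapsto (l,k)$, which is exactly the regular representation: sending the standard basis vectors $(1,0),(0,1)$ to $\delta_{id},\delta_\gamma$ identifies $(\Z^2,\text{swap})$ with $\Z[\Gamma]$ as $\Z[\Gamma]$-modules, the swap corresponding to left multiplication by $\delta_\gamma$. Taking the $q$-fold product yields $(M, \tilde\tau_2^{\times q}) \cong \Z[\Gamma]^q$.

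I would then conclude by projectivity: a free $\Z[\Gamma]$-module is projective, so the surjection $F^*$ of $\Z[\Gamma]$-modules admits a $\Z[\Gamma]$-linear section $s^* : M \to M'$, which is the desired $\Gamma$-equivariant section. If one prefers to avoid homological algebra, the same section can be produced explicitly: group the standard basis of $M$ into the $q$ pairs interchanged by $\tilde\tau_2^{\times q}$, lift the first vector of each pair to an arbitrary $F^*$-preimage $v \in M'$, and send its partner to $\tilde\tau'(v)$; the $\Gamma$-equivariance of $F^*$ ensures $\tilde\tau'(v)$ maps to the partner, and the fact that $\tilde\tau'$ is an involution makes the resulting map equivariant. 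I do not expect a genuine obstacle here, as the entire content lies in the identification $(M, \tilde\tau_2^{\times q}) \cong \Z[\Gamma]^q$; once that is in place the splitting is automatic. This stands in contrast to the elementary cases of Example~\ref{importantex}, where the relevant module is the sign or trivial representation, neither of which is free, so that no $\Gamma$-equivariant section need exist.
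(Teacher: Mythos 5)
Your proposal is correct and follows essentially the same route as the paper: both view $0 \to M_Y \xrightarrow{P^*} M' \xrightarrow{F^*} M \to 0$ as a short exact sequence of $\Z[\Gamma]$-modules, identify $(M, \tilde{\tau}_2^{\times q})$ with the free module $\Z[\Gamma]^q$, and conclude that the surjection $F^*$ splits $\Z[\Gamma]$-linearly because free modules are projective (the paper cites Eisenbud, Proposition A3.1, for this last step). Your explicit basis-lifting construction is just this projectivity argument made concrete, and it is also valid.
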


\begin{proof} 
The Galois group $\Gamma =\{id, \gamma \}$ acts on $M$ via   $\tilde{\tau}_2^{\times q}$, so $M$ is a $\Gamma$-module. We have the following short exact sequences of $\Z[\Gamma]$-modules:
$$ 0 \xrightarrow{ \ \ \ } M_Y \xrightarrow{\  P^* \  } M' \xrightarrow{ \  F^* \  } M \xrightarrow{ \ \ \ } 0.$$ 
 Note that we have   an isomorphism of $\Z[\Gamma]$-module:   
$$ M    \to \Z[\Gamma]^q,  \ \  (k_1,l_1, \dots, k_q, l_q)   \mapsto (k_1   \chi^{id} + l_1   \chi^{\gamma}, \dots ,  k_q   \chi^{id} + l_q   \chi^{\gamma}).$$ 
Hence $M$ is free $\Z[\Gamma]$-module of rank $q$, so it is a projective $\Z[\Gamma]$-module. By  \cite[Proposition A3.1]{Eisenbud},  there  exists a  morphism $s^* : M \to M'$ of $\Z[\Gamma]$-module  such that $F^* \circ s^* = id_{M}$. 
\end{proof}

 \begin{remark} \
\begin{enumerate}[leftmargin=0.75cm, label=(\roman*)]
\item The interpretation of Lemma \ref{SectionGalois} is : $(\GC^n, \tau') \cong \RC(\GC)^q \times T' $, where $T'$ is a real torus of dimension $n-2q$. 
\item For $\GR$-actions, the $\Z[\Gamma]$-module $M = \Z$, with $\Gamma$-action given by $\tilde{\tau}_0 = id$, is not a projective $\Z[\Gamma]$-module. Indeed, we have a $\Gamma$-equivariant isomorphism  
$$M    \to \Z[\Gamma]/(\chi^{\gamma}), \ \ m  \mapsto [ m \chi^{id} +   m     \chi^{\gamma} ].$$ 
\item For $\mathbb{S}^1$-actions, the $\Z[\Gamma]$-module $M = \Z$, with $\Gamma$-action given by $\tilde{\tau}_1 = -id$, is not a projective $\Z[\Gamma]$-module. Indeed, we have a $\Gamma$-equivariant isomorphism 
$$M    \to \Z[\Gamma]/(\chi^{\gamma}), \ \  m  \mapsto [ m \chi^{id} + (-m ) \chi^{\gamma} ].$$ 
\end{enumerate}
\end{remark}

\subsection{Real torus actions} \label{SubsectionRTA} 

We  now consider   actions  of   real tori  on   $\R$-varieties.  

\begin{lemma}  \label{defaction}
Let $T $ be a real  torus. There is a one-to-one correspondence between    
 quasi-projective $\R$-varieties    endowed with a $T $-action  and   tuples $(\T, \tau , X, \sigma , \mu)$   consisting of: 
\begin{enumerate}[leftmargin=0.75cm, label=(\roman*)]
\item   a complex torus $\T$ endowed with an $\R$-group structure $\tau$ such that $ \T/\langle \tau \rangle \cong T $;
\item a quasi-projective $\C$-variety $X$ endowed with an $\R$-structure $\sigma$; 
\item    an  action $\mu : \T \times X \to X$   such that the following diagram commutes:
\begin{center}
\begin{tikzpicture}
\matrix (m) [matrix of math nodes,row sep=1em,column sep=7em,minimum width=2em]
 {
          \T \times  X   &    X      \\
          \T \times  X   &    X         \\};
 \path[-stealth]
    (m-1-1) edge node  [above]{ $\mu $} (m-1-2)
            edge node  [left]{${\tau}  \times {\sigma }  $} (m-2-1)
    (m-1-2) edge node  [right]{${\sigma } $}  (m-2-2)
    (m-2-1) edge node  [below]{$\mu$} (m-2-2);
\end{tikzpicture}
\end{center}
\end{enumerate}
\end{lemma}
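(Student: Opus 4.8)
The plan is to deduce the correspondence from the equivalence of categories of Proposition \ref{eqcat}, together with its analogue for real algebraic groups, by checking that these equivalences are compatible with fibre products and hence carry group actions to group actions. The key point is that all the structure in the tuple $(\T,\tau,X,\sigma,\mu)$ is exactly the complexified data, and the commuting diagram in (iii) is precisely the descent condition making the action morphism defined over $\R$.

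First I would fix the dictionary. Given a quasi-projective $\R$-variety $X_0$ carrying an action of the real torus $T$, the complexification functor produces $(X,\sigma):=\big((X_0)_{\C},\,\mathrm{can}\big)$ and $(\T,\tau):=\big(T_{\C},\,\mathrm{can}\big)$, where $\mathrm{can}$ denotes the canonical $\R$-structure induced by the $\Gamma$-action by complex conjugation on the $\Spec(\C)$-factor. By Proposition \ref{eqcat} and its group-theoretic analogue, these assignments are quasi-inverse to $(X,\sigma)\mapsto X/\langle\sigma\rangle$ and $(\T,\tau)\mapsto \T/\langle\tau\rangle$, so they directly yield items (i) and (ii), recording $\T/\langle\tau\rangle\cong T$ and $X\cong (X_0)_{\C}$.

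Next I would establish the crucial compatibility: complexification commutes with fibre products over the base. Since $-\times_{\Spec(\R)}\Spec(\C)$ preserves fibre products, there is a canonical isomorphism of $\C$-varieties $\big(T\times_{\Spec(\R)}X_0\big)_{\C}\cong \T\times X$; and because the canonical $\R$-structure is induced by conjugation on the $\Spec(\C)$-factor, which distributes over the product, this isomorphism identifies the canonical $\R$-structure on the left with the product structure $\tau\times\sigma$. Consequently, an action morphism $\mu_0:T\times_{\Spec(\R)}X_0\to X_0$ of $\R$-varieties corresponds, under the equivalence, exactly to a morphism of $\C$-varieties $\mu:\T\times X\to X$ satisfying $\sigma\circ\mu=\mu\circ(\tau\times\sigma)$, i.e.\ making the diagram in (iii) commute. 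It then remains to transport the group-action axioms: associativity and the unit axiom for $\mu_0$ are encoded by commutative diagrams built from $\mu_0$, the multiplication $T\times_{\R}T\to T$ and the unit $\Spec(\R)\to T$; applying full faithfulness of the equivalence together with the product compatibility (also to $\T\times\T\times X$), each such diagram holds for $\mu_0$ if and only if the corresponding diagram, built from the multiplication and unit of $\T$, holds for $\mu$. Hence $\mu$ is a $\T$-action on $X$ precisely when $\mu_0$ is a $T$-action on $X_0$, and since complexification and descent are mutually inverse by Proposition \ref{eqcat}, the two assignments furnish the desired one-to-one correspondence.

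The step I expect to be the main obstacle is the careful bookkeeping that identifies the canonical $\R$-structure on the complexified product $\big(T\times_{\R}X_0\big)_{\C}$ with $\tau\times\sigma$ and, more generally, that the equivalence of Proposition \ref{eqcat} is monoidal for fibre products over $\Spec(\R)$. Once this compatibility is in place, the transfer of the associativity and unit diagrams is purely formal, following from the full faithfulness of the equivalence.
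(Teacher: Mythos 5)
Your proposal is correct and follows essentially the same route as the paper: both directions rest on the equivalence of Proposition \ref{eqcat} combined with the compatibility of complexification/descent with products (the paper uses $(\T\times X)/\langle\tau\times\sigma\rangle\cong \T/\langle\tau\rangle\times X/\langle\sigma\rangle$ for descent and $(T\times X)_{\C}\cong T_{\C}\times X_{\C}$ for complexification). Your write-up is merely more explicit about transporting the associativity and unit axioms, which the paper leaves implicit.
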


\begin{proof} 
Let $(\T, \tau , X, \sigma , \mu)$ be such a tuple. By Proposition
\ref{eqcat}, the morphism $\mu : \T \times X \to X$ induces a morphism $ \mu_0 : (\T \times X)/{\langle \tau \times \sigma \rangle} \to X/{\langle   \sigma \rangle}$.  Since $(\T \times X)/{\langle \tau \times \sigma \rangle} \cong  \T/{\langle \tau   \rangle}  \times X/{\langle   \sigma \rangle}$, we have a  $\T/{\langle \tau   \rangle}$-action on  $X/{\langle   \sigma \rangle}$. \\
Conversely, let $X$ be an $\R$-variety endowed with a $T$-action $  T \times X \to X$. Since $(T \times X)_{\C} \cong   T_{\C} \times  X_{\C}$, we obtain an action $\mu := T_{ \C} \times    X_{\C} \to   X_{\C}$ satisfying the commutative diagram:
\begin{center}
\begin{tikzpicture}
\matrix (m) [matrix of math nodes,row sep=1em,column sep=7em,minimum width=2em]
 {
          T_{\C} \times  X_{\C}   &    X_{\C}      \\
          T_{\C} \times  X_{\C}   &    X_{\C}         \\};
 \path[-stealth]
    (m-1-1) edge node  [above]{ \small{$\mu $}} (m-1-2)
            edge node  [left]{ \small{${id \times (z \mapsto \bar{z})} $}}  (m-2-1)
						edge node  [right] {  \small{${id \times (z \mapsto \bar{z}) }  $}} (m-2-1)
    (m-1-2) edge node  [right]{\small{${id \times (z \mapsto \bar{z}) } $}}  (m-2-2)
    (m-2-1) edge node  [below]{\small{$\mu$}} (m-2-2);
\end{tikzpicture}
\end{center}
Which ends the proof. 
\end{proof}

\begin{example} \label{RWA3}
Consider the action of $\GC^2$ on $\AC^3$ given by $(s,t) \cdot (x,y,z)= (sx, ty, stz)$. The Weil restriction $(\GC^2, \tau_2)$ acts on $(\AC^3, \sigma')$, where $\sigma'(x,y,z) = (\overline{y}, \overline{x}, \overline{z})$.  
\end{example}

\begin{example}   \label{RWA4}
Consider  the hypersurface $X$ of $\AC^4 := \Spec(\C[x_1, x_2, x_3, x_4])$ defined by $x_1x_3=x_2x_4$. The  torus $\GC^2$ acts on $\AC^4$ by $(s,t) \cdot (x_1, x_2, x_3, x_4) := (s x_1,t x_2, st^2x_3, s^2t x_4)$. Since the polynomial $x_1x_3-x_2x_4$ is homogeneous, $\GC^2$ acts on $X$. Let $\sigma'$ be the $\R$-structure on $\AC^4$ defined by $\sigma'(x_1, x_2, x_3, x_4) = (\overline{x_2}, \overline{x_1}, \overline{x_4}, \overline{x_3})$ and let $\sigma$ be the induced $\R$-structure on $X$. Then, the real torus $(\GC^2, \tau_2)$ acts on $(\AC^4, \sigma')$ and on $(X, \sigma)$. 
\end{example}

Let us note that if a real torus $(\T, \tau)$  acts on an affine variety $(X, \sigma)$, then the comorphism ${\sigma }^{\sharp}$ of ${\sigma }$ preserves the $M$-grading of the algebra $\C[X]$, where $M:=\Hom_{gr}(\T, \GC)$. This observation will be useful in the proof of Proposition \ref{GaloisToricDowngrading}.

\begin{lemma} \label{IsomGradedStructure}  
Let $(\T, \tau )$ be a real torus  acting   on the affine   $\R$-variety $(X , \sigma )$. Let $M := \Hom_{gr}(\T,\GC)$ and  let $\omega_M$ be the weight cone of the $\T$-action on $X$.  Then $\tilde{\tau}(\omega_M) = \omega_M$ and for all $m \in M$:
\begin{equation*}
{\sigma }^{\sharp} \left(\C[X]_m \right) = \C[X]_{\tilde{\tau}(m)}.
\end{equation*}
\end{lemma}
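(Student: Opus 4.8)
The plan is to dualize the equivariance square of Lemma \ref{defaction}. Recall that the $\T$-action is encoded by its comorphism $\mu^\sharp : \C[X] \to \C[M]\otimes_\C\C[X]$, and that an element $f$ lies in $\C[X]_m$ precisely when $\mu^\sharp(f) = \chi^m\otimes f$. On the torus side, the $\R$-group structure $\tau$ acts on $\C[M]$ by the $\C$-antilinear ring automorphism $\tau^\sharp$ determined by $\tau^\sharp(\chi^m) = \chi^{\tilde\tau(m)}$ (this is exactly how $\tau$ induces the lattice involution $\tilde\tau$ on $M$), while $\sigma^\sharp : \C[X]\to\C[X]$ is the $\C$-antilinear ring involution attached to $\sigma$.

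First I would take comorphisms in the commuting relation $\sigma\circ\mu = \mu\circ(\tau\times\sigma)$ of $\R$-scheme morphisms, which yields the identity $\mu^\sharp\circ\sigma^\sharp = (\tau^\sharp\otimes\sigma^\sharp)\circ\mu^\sharp$ of maps $\C[X]\to\C[M]\otimes_\C\C[X]$. Here I would first check that $\tau^\sharp\otimes\sigma^\sharp$ is the well-defined comorphism of $\tau\times\sigma$ and is $\C$-antilinear. Then, for $f\in\C[X]_m$, I would evaluate both sides on $f$: the right-hand side equals $(\tau^\sharp\otimes\sigma^\sharp)(\chi^m\otimes f) = \chi^{\tilde\tau(m)}\otimes\sigma^\sharp(f)$, so that $\mu^\sharp(\sigma^\sharp(f)) = \chi^{\tilde\tau(m)}\otimes\sigma^\sharp(f)$. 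This is exactly the statement that $\sigma^\sharp(f)$ is semi-invariant of weight $\tilde\tau(m)$, giving the inclusion $\sigma^\sharp(\C[X]_m)\subseteq\C[X]_{\tilde\tau(m)}$.

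To promote this to an equality I would use that both $\sigma^\sharp$ and $\tilde\tau$ are involutions. Applying the inclusion with $m$ replaced by $\tilde\tau(m)$ gives $\sigma^\sharp(\C[X]_{\tilde\tau(m)})\subseteq\C[X]_m$, and applying the involution $\sigma^\sharp$ to this inclusion yields $\C[X]_{\tilde\tau(m)}\subseteq\sigma^\sharp(\C[X]_m)$; together with the first inclusion this produces $\sigma^\sharp(\C[X]_m) = \C[X]_{\tilde\tau(m)}$. For the cone statement, I would note that $\sigma^\sharp$ is bijective, so $\C[X]_m\neq\{0\}$ if and only if $\C[X]_{\tilde\tau(m)}\neq\{0\}$; hence the weight monoid $S$ is stable under $\tilde\tau$, and since $\omega$ is the cone generated by $S$ and $\tilde\tau$ is $\Q$-linear, it follows that $\tilde\tau(\omega)=\omega$.

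The argument is a formal dualization, so no step is genuinely deep; the only point requiring care is the bookkeeping of antilinearity, namely verifying that $\tau^\sharp\otimes\sigma^\sharp$ is well defined on $\C[M]\otimes_\C\C[X]$ (each factor is $\C$-antilinear, yet the map respects the balancing $\lambda a\otimes b = a\otimes\lambda b$) and that the images $\sigma^\sharp(\C[X]_m)$ remain $\C$-subspaces despite $\sigma^\sharp$ being antilinear. Once this is in place, the weight shift by $\tilde\tau$ falls out immediately.
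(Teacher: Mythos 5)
Your proof is correct and follows essentially the same route as the paper's: both dualize the equivariance square of Lemma \ref{defaction} to get $\mu^{\sharp}\circ\sigma^{\sharp}=(\tau^{\sharp}\otimes\sigma^{\sharp})\circ\mu^{\sharp}$, evaluate on a semi-invariant $f\in\C[X]_m$ to obtain the inclusion $\sigma^{\sharp}(\C[X]_m)\subseteq\C[X]_{\tilde{\tau}(m)}$, and then upgrade to equality using that $\sigma^{\sharp}$ and $\tilde{\tau}$ are involutions. If anything, you are slightly more complete than the paper, since you spell out the antilinearity bookkeeping for $\tau^{\sharp}\otimes\sigma^{\sharp}$ and give the (easy) deduction of $\tilde{\tau}(\omega)=\omega$ from the stability of the weight monoid, a point the paper's proof leaves implicit.
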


\begin{proof}    
Let  $m \in M$ and let $f \in \C[X]_m$. We obtain from the diagram of Lemma \ref{defaction}: 
\begin{equation*}
(\mu^{\sharp}  \circ {\sigma }^{\sharp} )(f)  = (({\tau}^{\sharp}  \times {\sigma}^{\sharp} ) \circ \mu^{\sharp})(f) 
=  {\tau}^{\sharp} (\chi^m) \otimes  {\sigma }^{\sharp}  (f) 
=   \chi^{\tilde{\tau}(m)}  \otimes   {\sigma }^{\sharp} (f). 
\end{equation*}
Hence  ${\sigma }^{\sharp} \left(\C[X]_m \right) \subset \C[X]_{\tilde{\tau}(m)} $.  Moreover, if $g \in \C[X]_{\tilde{\tau}(m)}$, then $g = {\sigma }^{\sharp}( {\sigma }^{\sharp}(g))$. Hence, 
${\sigma }^{\sharp} \left(\C[X]_m \right) = \C[X]_{\tilde{\tau}(m)}$.
\end{proof}

\subsection{The case of affine toric $\R$-varieties}  \label{SubsectionATV}

In this subsection, we consider the particular case of      affine toric     $\R$-varieties, i.e.   affine $\R$-varieties $X$ such that $X_{\C}$ is an affine toric $\C$-variety.

\begin{proposition}  \label{ActionToric}
Let $(\T, \tau)$ be a real torus, let     $M := \Hom_{gr}(\T,\GC)$ and $N$ be its dual lattice.  Let    $\delta$  be a pointed cone in $N_{\Q}$ and let  $X_{\delta}$  be the associated affine     toric $\C$-variety.     
The torus $(\T, \tau)$ acts on the affine toric  $\R$-variety $(X_{\delta}, \sigma)$, where $\sigma$ is an $\R$-structure on $X_{\delta}$,   if and only if  there exists  an $\R$-group structure        $\tau'$ on $\T$ equivalent to $\tau$ such that $\hat{\tau}'(\delta) = \delta$.
\end{proposition}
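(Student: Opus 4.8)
The plan is to work entirely with the coordinate ring $R := \C[X_\omega] = \C[\omega^\vee \cap M]$, on which the standard $\T$-action $\mu_{\mathrm{std}}$ has weight cone $\omega^\vee$ and comorphism $\chi^m \mapsto \chi^m \otimes \chi^m$. Recall that $X_\omega$ has dimension $\mathrm{rank}(N)=\dim(\T)$, that a $\T$-action on $X_\omega$ is the same datum as an $M$-grading of $R$, and that any effective one is obtained from $\mu_{\mathrm{std}}$ by precomposing with some $\varphi \in \Aut_{gr}(\T)$; here ``$(\T,\tau)$ acts on $(X_\omega,\sigma)$'' means that some such effective action, together with some $\R$-structure $\sigma$, fits into the commuting square of Lemma \ref{defaction}. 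The argument rests on translating that square, via comorphisms, into the single requirement that the relevant weight cone be stable under $\tilde\tau$, together with the duality $\tilde\tau'(\omega^\vee)=\omega^\vee \Leftrightarrow \hat\tau'(\omega)=\omega$, which holds because $\tilde\tau'$ and $\hat\tau'$ are adjoint involutions on $M$ and $N$.

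For the ``if'' direction, suppose $\tau' = \varphi\,\tau\,\varphi^{-1}$ with $\varphi\in\Aut_{gr}(\T)$ and $\hat\tau'(\omega)=\omega$, equivalently $\tilde\tau'(\omega^\vee)=\omega^\vee$, so that $\tilde\tau'$ preserves the monoid $\omega^\vee\cap M$. I would first define the candidate $\R$-structure by the diagonal formula
$$\sigma^\sharp\Bigl(\sum_m c_m\,\chi^m\Bigr) := \sum_m \overline{c_m}\,\chi^{\tilde\tau'(m)},$$
which is exactly the restriction to $R$ of the comorphism of $\tau'$; it is a well-defined (since $\tilde\tau'(\omega^\vee\cap M)=\omega^\vee\cap M$) antiregular involution of $R$, hence an $\R$-structure on $X_\omega$. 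A direct check on the generators $\chi^m$, using $\mu_{\mathrm{std}}^\sharp(\chi^m)=\chi^m\otimes\chi^m$, shows that both routes around the square of Lemma \ref{defaction} send $\chi^m$ to $\chi^{\tilde\tau'(m)}\otimes\chi^{\tilde\tau'(m)}$, so $(\T,\tau')$ acts on $(X_\omega,\sigma)$ through $\mu_{\mathrm{std}}$. Finally I would transport to $\tau$: put $\mu := \mu_{\mathrm{std}}\circ(\varphi\times\mathrm{id})$, again an effective $\T$-action, and observe that the desired square $\sigma\circ\mu = \mu\circ(\tau\times\sigma)$ unwinds, using $\sigma\circ\mu_{\mathrm{std}}=\mu_{\mathrm{std}}\circ(\tau'\times\sigma)$, into $\mu_{\mathrm{std}}\circ((\tau'\varphi)\times\sigma)=\mu_{\mathrm{std}}\circ((\varphi\tau)\times\sigma)$, which holds precisely because $\tau'\varphi=\varphi\tau$, i.e.\ because $\tau'=\varphi\tau\varphi^{-1}$. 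Thus $(\T,\tau)$ acts on $(X_\omega,\sigma)$ through $\mu$.

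For the ``only if'' direction, suppose $(\T,\tau)$ acts on $(X_\omega,\sigma)$ through some effective action $\mu$. By Lemma \ref{IsomGradedStructure} the weight cone $\omega_M$ of $\mu$ satisfies $\tilde\tau(\omega_M)=\omega_M$. Since $\mu$ is a full-rank effective action, $X_\omega \cong \Spec(\C[\omega_M\cap M])$, so $X_\omega$ is also the affine toric variety attached to $\omega_M^\vee$; invoking the classification of affine toric varieties (isomorphic affine toric varieties have lattice-equivalent cones) yields $\varphi\in\Aut_{gr}(\T)$, inducing the lattice automorphism $\tilde\varphi$ of $M$, with $\tilde\varphi(\omega^\vee)=\omega_M$. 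Setting $\tau':=\varphi^{-1}\tau\varphi$, which is equivalent to $\tau$, I then compute $\tilde\tau'(\omega^\vee)=\tilde\varphi^{-1}\tilde\tau\tilde\varphi(\omega^\vee)=\tilde\varphi^{-1}\tilde\tau(\omega_M)=\tilde\varphi^{-1}(\omega_M)=\omega^\vee$, whence $\hat\tau'(\omega)=\omega$ by the adjointness duality.

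The verification of the commuting square on the generators $\chi^m$ and the cone duality are routine. The one genuinely external input, and the step I expect to be the main obstacle, is the reduction used in the ``only if'' part: that every effective full-rank $\T$-action on $X_\omega$ has its weight cone in a single orbit under the lattice automorphisms of $M$, equivalently that the isomorphism class of an affine toric variety determines its defining cone up to a lattice automorphism. I would either cite this from the standard toric literature or reprove it by recovering the weight monoid intrinsically from the graded algebra $R$.
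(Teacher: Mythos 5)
Your proof is correct and takes essentially the same route as the paper: your ``if'' direction constructs exactly the paper's $\R$-structure $\sigma^{\sharp}\bigl(\sum_m a_m\chi^m\bigr)=\sum_m \overline{a}_m\chi^{\tilde{\tau}'(m)}$, and your ``only if'' direction combines Lemma \ref{IsomGradedStructure} with conjugation by some $\varphi\in\Aut_{gr}(\T)$, just as the paper does. The external input you flag (that a full-rank effective action has its weight grading standard up to an element of $\GL(M)$, equivalently that the cone of an affine toric variety is determined up to lattice automorphism) is precisely the step the paper asserts without comment when it writes $\chi^m\in\C[S_{\omega}]_{\tilde{\varphi}(m)}$, so your version is if anything more explicit, as are your verification of the commuting square and the transport of the action from $\tau'$ back to $\tau$.
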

 
\begin{proof} (Compare with \cite[Proposition 1.19]{Huruguen}). 
Assume that $\tau$ is equivalent to $\tau'$ and $\hat{\tau}'(\delta)=\delta$. Recall that we denote     $\C[M] := \{ \sum_{m \in M} a_m  \chi^m, a_m \in \C \}$ and    $\C[S_{\delta}] := \{ \sum_{m \in  \delta^{\vee} \cap M} a_m \chi^m, \\ 
a_m \in \C   \} $ the coordinate rings of   $\GC^n$ and $X_{\delta}$ respectively. Since $\tilde{\tau}'(\delta^{\vee})=\delta^{\vee}$, the algebra automorphism  
\begin{equation*}
\tau^{\sharp} : \C[M]  \to \C[M], \ \
\sum_{m \in M} a_m \chi^m    \mapsto \sum_{m \in M} \overline{a}_m \chi^{\tilde{\tau}'(m)} 
\end{equation*}
can be restricted to $\C[S_{\delta}] \subset \C[M]$.  It is the comorphism of an $\R$-structure $\sigma$ on $X_{\delta}$.  

Conversely, let $\sigma$ be an $\R$-structure on $X_{\delta}$ such that  $(\T, \tau)$ acts on   $(X_{\delta}, \sigma)$.  Let $m \in \delta^{\vee} \cap M$, then there exists $\tilde{\varphi} \in \GL(M)$ such that $\chi^m \in \C[S_{\delta}]_{\tilde{\varphi}(m)}$. Let $\varphi \in \Aut_{gr}(\T)$ be the corresponding automorphism.   By  Lemma \ref{IsomGradedStructure}, $\sigma^{\sharp}(\chi^m) \in \C[S_{\delta}]_{\tilde{\tau}(\tilde{\varphi}(m))}$, so  $\sigma^{\sharp}(\chi^m)= \chi^{(\tilde{\varphi}^{-1} \circ \tilde{\tau} \circ \tilde{\varphi})(m)}$. Let $\tau':= \varphi^{-1} \circ \tau \circ  \varphi$ be an $\R$-group structure on $\T$ equivalent to $\tau$, then $\sigma^{\sharp}(\chi^m) = \chi^{\tilde{\tau}'(m)}$. Hence, ${\tilde{\tau}'(m)} \in \delta^{\vee} \cap M$ and  $\hat{\tau}'(\delta) = \delta$. 
\end{proof}

\begin{remark} The weight cone of the $\T$-action on $X_{\delta}$ does not always coincide with $\delta^{\vee}$ (compare with Lemma \ref{IsomGradedStructure}).
\end{remark}

\begin{remark} The Weil restriction $\RC(\GC)$ acts on a 2-dimensional affine   toric  $\R$-variety  $(X_{\delta}, \sigma)$ if and only if there exists a basis $\{e_1, e_2\}$ of the lattice $N$ such that the cone $\delta$ is symmetric with respect to the line $\Q(e_1+e_2)$. Let $\sigma$ be the $\R$-structure on $\AC^2$ defined by  $\sigma(x,y)=(\overline{y}, \overline{x})$. The    toric $\R$-variety $(\AC^2, \sigma)$     is endowed with an $\RC(\GC)$-action    since $\hat{\tau}_2(\Q^2_{\geq 0} ) = \Q^2_{\geq 0}$.  
\end{remark}

\begin{example} \label{RW3t}
Consider the cone $\delta= \Q^3_{\geq 0 }$ in $N_{\Q} = \Q^3$ spanned by the canonical basis of $N$. Let $X_{\delta}:= \AC^3$ be the associated  toric $\C$-variety. The cone $\delta$ is stable under the lattice involution $\hat{\tau}':= \hat{\tau}_2 \times \hat{\tau}_0$ induced by the $\R$-group structure on $\GC^3$ defined by $\tau':= \tau_2 \times \tau_0$. Consider the   $\R$-structure $\sigma'$ on $\AC^3$  defined by $\sigma'(x,y,z)=(\overline{y}, \overline{x}, \overline{z})$.  The natural action of $\GC^3$ on $\AC^3$ is compatible with the $\R$-structures $\tau'$ and $\sigma'$, i.e. $R_{\C/\R}(\GC) \times \GR$ acts on $(\AC^3, \sigma')$. Note that the action of  $R_{\C/\R}(\GC)$ on $(\AC^3, \sigma')$ given in  Example \ref{RWA3} comes from  the action of $R_{\C/\R}(\GC) \times \GR$ on $(\AC^3, \sigma')$ (details in Example \ref{RWA3''}).   
\end{example}

\begin{counterexample} Consider the cone $\delta = \text{Cone} \left\{ 
\begin{bmatrix} 1 \\ 0   \end{bmatrix} ; \begin{bmatrix} 1 \\ 2  \end{bmatrix} \right\}$ in $N_{\Q} = \Q^2$.  There are no $\R$-group structure $\tau$ equivalent to $\tau_2$ such that $\hat{\tau}(\delta)=\delta$, so we cannot endow   $X_{\delta}$ with an $\R$-structure compatible with a $R_{\C/\R}(\GC)$-action.
\end{counterexample}

Let $(\T, \tau)$ be a real torus and let  $\sigma$ be an $\R$-structure on an $n$-dimensional toric $\C$-variety $X_{\delta}$ induced by an $\R$-group structure $\tau'$ on $\GC^n$. By a  $(\T, \tau)$-action   on  $(X_{\delta}, \sigma)$, we mean a $(\T, \tau)$-action  such that $(\T, \tau)$ is a real subtorus of  $(\GC^n, \tau')$. Let's now have a  look at   $(\T, \tau)$-actions   on $(X_{\delta}, \sigma)$.

\begin{corollary} \label{ToricRW}
Let     $M := \Hom_{gr}(\GC^n,\GC)$ and $N$ be its dual lattice.  Let  $\delta$ be a pointed cone in $N_{\Q}$ and let  $X_{\delta}$  be the associated affine     toric $\C$-variety.
The torus $(\GC^{2q}, \tau_2^{\times q})$ acts on the affine toric  $\R$-variety $(X_{\delta}, \sigma)$, where $\sigma$ is an $\R$-structure on $X_{\delta}$,   if and only if  there exists  an $\R$-group structure       $\tau'$ on $\GC^n$ equivalent to an $\R$-group structure of the form $\tau_2^{\times q} \times \tau''$ and  such that $\hat{\tau}'(\delta) =\delta$, where $\tau''$ is an $\R$-group structure on $\GC^{n-2q}$.     
\end{corollary}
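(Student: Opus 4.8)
The plan is to reduce both implications to two tools already established: Proposition \ref{ActionToric}, which controls when an $\R$-group structure on the full torus descends to an $\R$-structure on $X_{\omega}$, and Lemma \ref{SectionGalois}, which supplies the $\Gamma$-equivariant splitting responsible for the decomposition $\tau' \sim \tau_2^{\times q} \times \tau''$. Throughout I would keep in mind the standing convention that a $(\GC^{2q}, \tau_2^{\times q})$-action on $(X_{\omega}, \sigma)$ means a realization of $(\GC^{2q}, \tau_2^{\times q})$ as a real subtorus of $(\GC^n, \tau')$, where $\tau'$ is the $\R$-group structure on $\GC^n$ inducing $\sigma$.

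For the ``if'' direction, suppose we are given $\tau'$ on $\GC^n$ with $\hat{\tau}'(\omega) = \omega$ and $\tau'$ equivalent to $\tau_2^{\times q} \times \tau''$. Since $\hat{\tau}'(\omega) = \omega$ (equivalently $\tilde{\tau}'(\omega^\vee) = \omega^\vee$), Proposition \ref{ActionToric} produces an $\R$-structure $\sigma$ on $X_{\omega}$ for which the full torus $(\GC^n, \tau')$ acts on $(X_{\omega}, \sigma)$. It then remains to realize $(\GC^{2q}, \tau_2^{\times q})$ as a real subtorus: writing $\tau' = \varphi \circ (\tau_2^{\times q} \times \tau'') \circ \varphi^{-1}$ for some $\varphi \in \Aut_{gr}(\GC^n) \cong \GL_n(\Z)$, I would take the standard inclusion of $\GC^{2q}$ as the first $2q$ coordinates of $\GC^{2q} \times \GC^{n-2q} = \GC^n$, which is $\Gamma$-equivariant for $\tau_2^{\times q} \times \tau''$, and transport it by $\varphi$; a short check gives $\tau' \circ (\varphi \circ \iota) = (\varphi \circ \iota) \circ \tau_2^{\times q}$, so $(\GC^{2q}, \tau_2^{\times q})$ is a subtorus of $(\GC^n, \tau')$. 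Restricting the $\GC^n$-action yields the desired action on $(X_{\omega}, \sigma)$.

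For the ``only if'' direction, assume $(\GC^{2q}, \tau_2^{\times q})$ acts on $(X_{\omega}, \sigma)$. By the convention this gives an $\R$-group structure $\tau'$ on $\GC^n$ inducing $\sigma$, together with a $\Gamma$-equivariant inclusion $(\GC^{2q}, \tau_2^{\times q}) \hookrightarrow (\GC^n, \tau')$. Because $\sigma$ is induced by $\tau'$, the construction underlying Proposition \ref{ActionToric} forces $\tilde{\tau}'(\omega^\vee) = \omega^\vee$, i.e. $\hat{\tau}'(\omega) = \omega$, which is half of the conclusion. For the other half I would apply Lemma \ref{SectionGalois} to this inclusion: it provides a $\Gamma$-equivariant section of the restriction map on character lattices, hence a $\Gamma$-equivariant splitting $(\GC^n, \tau') \cong \RC(\GC)^q \times T'$ with $T'$ of dimension $n - 2q$. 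In terms of $\R$-group structures this says precisely that $\tau'$ is equivalent to $\tau_2^{\times q} \times \tau''$ with $\tau''$ an $\R$-group structure on $\GC^{n-2q}$, which would complete the argument.

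The only genuinely substantial step is the $\Gamma$-equivariant splitting in the ``only if'' direction, and it is exactly where the quasi-split hypothesis enters: the character lattice of $\RC(\GC)^q$ is free over $\Z[\Gamma]$, so the relevant short exact sequence of $\Z[\Gamma]$-modules splits equivariantly by Lemma \ref{SectionGalois}. This is the feature that fails for an $\s^1$-factor, where no such equivariant section exists, so one should not expect the analogous clean decomposition in that case.
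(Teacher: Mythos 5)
Your proposal is correct and takes essentially the same route as the paper, whose entire proof is the one-line citation of Proposition \ref{ActionToric} and Lemma \ref{SectionGalois}. You simply make explicit the details the paper leaves implicit: transporting the standard inclusion of $\GC^{2q}$ by the equivalence $\varphi$ in the ``if'' direction, and reading the $\Gamma$-equivariant section of Lemma \ref{SectionGalois} as the splitting $(\GC^n,\tau')\cong \RC(\GC)^q\times T'$, i.e.\ $\tau'\sim\tau_2^{\times q}\times\tau''$, in the ``only if'' direction.
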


\begin{proof}
It is a consequence of Lemma \ref{SectionGalois}  and Proposition \ref{ActionToric}.
\end{proof}

The Corollary \ref{ToricRW} is specific to   Weil restriction actions:

\begin{example} \label{exGMRa} 
 Let $N=\Z^2$,  let $\delta= \Q^2_{\geq 0}$ be a pointed cone in $N_{\Q}$, and let $\AC^2$ be the associated affine toric $\C$-variety. Since $\hat{\tau}_2(\delta) = \delta$, the $\R$-group structure $\tau_2$ on $\GC^2$ extends to an $\R$-structure $\sigma$ on $\AC^2$ defined by $\sigma(x,y) = (\overline{y}, \overline{x})$. Note that the real torus $(\GC, \tau_0)$ acts on $(\AC^2, \sigma)$ by $ t \cdot (x,y)   = (tx, ty)$, but $\tau_2$ is not equivalent to $\tau_0 \times \tau_1$  (see Example \ref{importantex}). 
\end{example}

\section{Altmann-Hausen presentation for normal affine $\C$-varieties}   \label{SectionCAHP}
 
In this  section, we introduce  the group of tailed polyhedra, which will serve  as the group of coefficients for the polyhedral divisors, and we recall  the main results  obtained by Altmann-Hausen in \cite{Alt}. We also   recall some basic facts   about convex geometry. Our main references for this are Altmann-Hausen article's \cite{Alt} and Fulton book's \cite{Fult}.

\subsection{Tailed polyhedra and polyhedral divisors} \label{SubsectionTPPD}

A subset $\Pi \subset N_{\Q}$ is called a \textit{polytope} if there exists a finite set  $S \subset N_{\Q}$ such that $\Pi$ is the convex hull of $S$,  and  
it  is called a    \textit{rational polytope} if $S$ can be taken inside the lattice $  N$. 
A proper face $\Pi'$ of $\Pi$ is 
the intersection of $\Pi$ with a supporting affine hyperplane.

A \textit{convex polyhedron} is the intersection of finitely many closed affine half spaces in $N_{\Q}$.  
For us, a \textit{polyhedron} in $N_{\Q}$ is always a convex polyhedron.
The \textit{relative interior} of a  polyhedron $\Delta$, denoted by  $\text{Relint}(\Delta )$, is obtained  by removing all  {proper}  {faces} from $\Delta$. 
Moreover, any polyhedron $\Delta$ in $ N_{\Q}$ admits a {Minkowski}  {sum} decomposition: 
\begin{equation*}
\Delta = \Pi + \omega_N,
\end{equation*}
where $\Pi \subset N_{\Q}$ is a polytope and $\omega_N \subset N_{\Q}$ is a cone. In this decomposition, the cone $\omega_N$ is unique and called the \textit{tail cone} of $\Delta$ (see \cite[\S 1]{Alt}).

\begin{example}  $\Delta = \Pi + \omega_N$  
\begin{multicols}{5}
\small{
\begin{flushleft}
\definecolor{ffqqqq}{rgb}{1.,0.,0.}
\begin{tikzpicture}[line cap=round,line join=round,>=triangle 45,x=1.0cm,y=1.0cm]
\begin{axis}[
x=1.0cm,y=1.0cm,
axis lines=middle,
ymajorgrids=false,
xmajorgrids=false,
xmin=-0.05,
xmax=2.25,
ymin=-0.20,
ymax=1.25,
xtick={-3.0,-2.0,...,3.0},
ytick={-3.0,-2.0,...,3.0},
]
\clip(-3.,-3.) rectangle (3.,3.);
\shade[top color = white, bottom color = red] (0.,0.5) -- (0.,2.) -- (3.,2.) --  (1.,0.)  -- cycle;
\draw [line width=2.pt,color=ffqqqq] (0.,0.5)-- (0.,2.);
\draw [line width=2.pt,color=ffqqqq] (0.,2.)-- (3.,2.);
\draw [line width=2.pt,color=ffqqqq] (3.,2.)-- (1.,0.) ;
\draw [line width=2.pt,color=ffqqqq] (1.,0.)-- (0.,0.5);
\node[label=right:{ \textbf{$\Delta$}   }] (n1) at (0.5,0.75) {};
\end{axis}
\end{tikzpicture}
\end{flushleft}

\begin{center}
$\begin{matrix}
\ \\
= \\
\ \\
\end{matrix}$
\end{center}

\begin{center}
\definecolor{ffqqqq}{rgb}{1.,0.,0.}
\begin{tikzpicture}[line cap=round,line join=round,>=triangle 45,x=1.0cm,y=1.0cm]
\begin{axis}[
x=1.0cm,y=1.0cm,
axis lines=middle,
ymajorgrids=false,
xmajorgrids=false,
xmin=-0.20,
xmax=1.5,
ymin=-0.20,
ymax=1.35,
xtick={-3.0,-2.0,...,3.0},
ytick={-3.0,-2.0,...,3.0},
]
\clip(-3.,-3.) rectangle (3.,3.);
\fill[line width=2.pt,color=ffqqqq,fill=ffqqqq,fill opacity=1.0] (0.,0.5) -- (1.,0.)  -- cycle;
\draw [line width=2.pt,color=ffqqqq] (0.,0.5)-- (1.,0.)  node [above] { \ \  \textcolor[rgb]{0,0,0}{\textbf{$\Pi$}} \ \    };
\end{axis}
\end{tikzpicture}
\end{center}

\begin{center}
$\begin{matrix}
\ \\
+ \\
\ \\
\end{matrix}$
\end{center}

\begin{center}
\definecolor{ffqqqq}{rgb}{1.,0.,0.}
\begin{tikzpicture}[line cap=round,line join=round,>=triangle 45,x=1.0cm,y=1.0cm]
\begin{axis}[
x=1.0cm,y=1.0cm,
axis lines=middle,
ymajorgrids=false,
xmajorgrids=false,
xmin=-0.20,
xmax=1.5,
ymin=-0.20,
ymax=1.35,
xtick={-3.0,-2.0,...,3.0},
ytick={-3.0,-2.0,...,3.0},
]
\clip(-3.,-3.) rectangle (3.,3.);
\shade[top color = white, bottom color = red]  (0.,0.) -- (0.,2.) -- (3.,3.) --  cycle;
\draw [line width=2.pt,color=ffqqqq] (0.,0)-- (0.,2.) ;
\draw [line width=2.pt,color=ffqqqq] (0.,2.)-- (3.,3.);
\draw [line width=2.pt,color=ffqqqq] (3.,3.)-- (0.,0.);
\node[label=right:{ \textbf{$\omega_N$} }] (n1) at (0.15,1.) {};
\end{axis}
\end{tikzpicture}
\end{center}}
\end{multicols}
\end{example}

\begin{definition}   
Let $\omega_N$ be a pointed cone in $N_{\Q}$.
By a $\omega_N$-\textit{polyhedron} in $N_{\Q}$, we mean a polyhedron   in $N_{\Q}$ having the cone $\omega_N$ as its tail cone. We denote the set of all $\omega_N$-polyhedra in $ N_{\Q}$ by $\text{Pol}_{\omega_N}^+(N_{\Q})$.
\end{definition}

The Minkowski sum of two  {$\omega_N$-polyhedra} in $ N_{\Q}$ is again a $\omega_N$-polyhedron in $ N_{\Q}$. Thus, endowed with Minkowski sum, $\text{Pol}_{\omega_N}^+(N_{\Q})$ is an abelian monoid, whose neutral element is $\omega_N$ \cite[\S 1]{Alt}.

We now introduce the language of polyhedral divisors and proper polyhedral divisors.  The idea is to replace   rational   coefficient  by tailed polyhedra    \cite[\S 2]{Alt}.

Let $Y$ be a normal $\C$-variety. The group of Weil divisors on $Y$ is denoted $\text{WDiv}(Y)$ and the group of Cartier divisors on $Y$ is denoted by $\text{CDiv}(Y)$. Since $Y$ is normal, we have an inclusion $\text{CDiv}(Y) \subset \text{WDiv}(Y)$. A Cartier (resp. Weil) $\Q$-divisor is an element of $\Q \otimes_{\Z} \text{CDiv}(Y)$ (resp $\Q \otimes_{\Z} \text{WDiv}(Y)$). The sheaf of sections $\mathcal{O}(D)$ of a Weil $\Q$-divisor $D$ on  $Y$ is defined by:
\begin{equation*}
\H^0(V, \mathcal{O}(D)) := \{ f \in \C(Y) \ | \ \Div_V(f \big{|}_{V}) + D \big{|}_{V}   \geq 0 \} \cup \{ 0 \},
\end{equation*}
where $V \subset Y$ is an open subset.
Now we turn to divisors with tailed polyhedra coefficients. Let $\omega_N$ be a pointed cone in $N_{\Q}$.
An $\omega_N$-\textit{polyhedral} \textit{divisor} on $Y$ is a formal sum:
\begin{equation*}
 \D = \sum_{Z} \Delta_Z \otimes Z \in \text{Pol}_{\omega_N}^+(N_{\Q}) \otimes_{\Z} \text{WDiv}(Y)
\end{equation*}
over all prime divisors $Z \subset Y$,  and $\Delta_Z=\omega_N$ for all but finitely prime divisors $Z$.

Let $\D = \sum_{Z} \Delta_Z \otimes Z$ be a $\omega_N$-polyhedral divisor on $Y$. For a prime divisor $Z$ on $Y$ we denote the support function of $\Delta_Z$ by 
\begin{equation*}
h_{Z} : \omega_N^{\vee} \to \Q, \ m \mapsto  \text{min}\{ \langle m,v \rangle \ | \ v \in \Delta_Z \}.
\end{equation*}
For every $m \in \omega_N^{\vee}$ we can evaluate $\D$ in $m$ by  letting $\D(m)$ be the Weil $\Q$-divisor on $Y$ defined by:
\begin{equation*}
\D(m) := \sum_{Z} h_Z(m) \otimes Z \in \Q \otimes_{\Z} \text{WDiv}(Y).
\end{equation*}

Before introducing   proper polyhedral divisors, we  recall the following definitions:

\begin{definition} A Cartier $\Q$-divisor $D$ on $Y$ is called \textit{semi-ample} if, for some $n \in \N^*$, the set of open subsets $Y_f :=Y \backslash \text{Supp}(\Div(f) + D)$, with  $f \in \H^0(Y, \mathcal{O}_Y(nD)$, cover $Y$. A Cartier $\Q$-divisor $D $ on $Y$ is called  \textit{big} if, for some $n \in \N^*$, there exists a section $f \in \H^0(Y, \mathcal{O}_Y(nD)$ with an affine non-vanishing locus $Y_f$. 
\end{definition}

\begin{definition} \label{ppDiv}
A \textit{proper} $\omega_N$-\textit{polyhedral} \textit{divisor} on $Y$, abbreviated an $\omega_N$-pp-divisor, is an $\omega_N$-polyhedral divisor $\D = \sum_{Z} \Delta_Z \otimes Z $ on $Y$ satisfying the following properties:
\begin{enumerate}[leftmargin=0.75cm, label=(\roman*)]
\item  for all $m \in \omega_N^{\vee} \cap M, \ \D(m)$ is a semi-ample Cartier $\Q$-divisor on $Y$; and
\item for all $m \in \text{Relint}(\omega_N^{\vee}) \cap M, \ \D(m)$ is big.
\end{enumerate}
\end{definition}

The sum of two $\omega_N$-pp-divisors with respect to a given  cone $\omega_N$ is again an $\omega_N$-pp-divisor. Thus, $\omega_N$-pp-divisors form a monoid denoted by $\PPDiv(Y, \omega_N)$.

\begin{example} \label{Eval} Let $N = \Z^2$, let $\omega_N = \Q_{\geq 0 }^2 $, and let $\Delta$  be the $\omega_N$-polyhedron defined below. The normal quasifan  associated to $\Delta$ consists of the two cones $\delta_1$ and $\delta_2$ refining the  cone $\omega_N^{\vee} = \Q_{\geq 0 }^2$ of the dual lattice $M = \Z^2$ (see \cite[\S 1.1.2]{LienT} for details). The support function $h_{\Delta}  : \omega_N^{\vee} \to \Q, \ m \mapsto  \text{min}\{ \langle m,v \rangle \ | \ v \in \Delta  \} $ is linear on each $\delta_i$, and we obtain:
\begin{multicols}{3}
\ \vspace{-1.15cm}
\begin{center}
\begin{equation*}
h_{\Delta}(m_1, m_2) = \left\{
														\begin{array}{ll}
														m_2  & \mbox{if } (m_1, m_2) \in \delta_1 \\
														m_1   & \mbox{if } (m_1, m_2) \in \delta_2\\
														\end{array}
												\right.	
\end{equation*}
\ \vspace{2cm}
\end{center}

\begin{flushright}
\definecolor{ffqqqq}{rgb}{1.,0.,0.}
\begin{tikzpicture}[line cap=round,line join=round,>=triangle 45,x=1.0cm,y=1.0cm]
\begin{axis}[
x=1.0cm,y=1.0cm,
axis lines=middle,
ymajorgrids=true,
xmajorgrids=true,
xmin=-0.2,
xmax=2.25,
ymin=-0.2,
ymax=2.25,
xtick={-3.0,-2.0,...,3.0},
ytick={-3.0,-2.0,...,3.0},]
\clip(-3.,-3.) rectangle (3.,3.);
\shade[top color = white, bottom color = red]  (0.,1) -- (0.,2.5) -- (2.5,2.5) -- (2.5,0.) -- (1.,0.)  -- cycle;
\draw [line width=2.pt,color=ffqqqq] (0.,1)-- (0.,2.5);
\draw [line width=2.pt,color=ffqqqq] (0.,2.5)-- (2.5,2.5);
\draw [line width=2.pt,color=ffqqqq] (2.5,2.5)-- (2.5,0.);
\draw [line width=2.pt,color=ffqqqq] (2.5,0.)-- (1.,0.);
\draw [line width=2.pt,color=ffqqqq] (1.,0.)-- (0.,1);
\node[label=right:{$ {\Delta } $}] (n1) at (0.75,1.) {};
\end{axis}
\end{tikzpicture}
\end{flushright}

\begin{flushright}
\begin{tikzpicture}[line cap=round,line join=round,>=triangle 45,x=1.0cm,y=1.0cm]
\begin{axis}[
x=1.0cm,y=1.0cm,
axis lines=middle,
ymajorgrids=true,
xmajorgrids=true,
xmin=-0.2,
xmax=2.25,
ymin=-0.2,
ymax=2.25,
xtick={-3.0,-2.0,...,3.0},
ytick={-3.0,-2.0,...,3.0},]
\clip(-3.,-3.) rectangle (3.,3.);
\draw [-,line width=1.25pt] (0.,0.) -- (0.,3.);
\draw [-,line width=1.25pt] (0.,0.) -- (3.,0.);
\draw [-,line width=1.25pt] (0.,0.) -- (3.,3.);
\node[label=right:{${\delta_1} $}] (n1) at (0.75,0.35) {};
\node[label=right:{${\delta_2} $}] (n1) at (0.,1.5)  {};
\end{axis}
\end{tikzpicture}
\end{flushright}
\end{multicols}
\hspace{-0.85cm}Consider a divisor $\D := \Delta \otimes D $ on a normal  variety $Y$, where $D$ is a prime divisor. Then, 
\begin{equation*}
\D(m_1, m_2)  = \left\{
																\begin{array}{ll}
																m_2  \otimes D & \mbox{if } (m_1, m_2) \in \delta_1 \\
																m_1  \otimes D  & \mbox{if } (m_1, m_2) \in \delta_2\\
																\end{array}
																\right. \\	
\end{equation*}
\end{example}

\begin{example} \label{RWA4'} Let $N = \Z^2$, let $\omega_N = \Q_{\geq 0 }^2 $, let $\Delta_1=\Delta_2=\omega_N$ and let $\Delta_3$ and  $\Delta_4$  be the $\omega_N$-polyhedra defined in the following illustrations. The normal quasifan  associated to   $\Delta_3$ (resp. $\Delta_4$) consists of two cones  refining the  cone $\omega_N^{\vee} = \Q_{\geq 0 }^2$ of the dual lattice $M = \Z^2$. The support function of the polyhedron $\Delta_i$ is denoted $h_{i}  : \omega_N^{\vee} \to \Q, \ m \mapsto  \text{min}\{ \langle m  |  v \rangle \ | \ v \in \Delta_i  \} $. Note that $h_1=h_2=0$.

\begin{multicols}{4}
\definecolor{ffqqqq}{rgb}{1.,0.,0.}
\begin{flushleft}
\begin{tikzpicture}[line cap=round,line join=round,>=triangle 45,x=1.0cm,y=1.0cm]
\begin{axis}[
x=1.0cm,y=1.0cm,
axis lines=middle,
ymajorgrids=true,
xmajorgrids=true,
xmin=-0.2,
xmax=2.75,
ymin=-0.2,
ymax=2.75,
xtick={-3.0,-2.0,...,3.0},
ytick={-3.0,-2.0,...,3.0},]
\clip(-3.,-3.) rectangle (3.,3.);
\shade[top color = white, bottom color = red](0.,1.) -- (0.,3.) -- (3.,3.) -- (3.,0.) -- (2.,0.)  -- cycle;
\draw [line width=2.pt,color=ffqqqq] (0.,1.)-- (0.,3.);
\draw [line width=2.pt,color=ffqqqq] (0.,3.)-- (3.,3.);
\draw [line width=2.pt,color=ffqqqq] (3.,3.)-- (3.,0.);
\draw [line width=2.pt,color=ffqqqq] (3.,0.)-- (2.,0.);
\draw [line width=2.pt,color=ffqqqq] (2.,0.)-- (0.,1.);
\node[label=right:{$ {\Delta_3 } $}] (n1) at (0.75,1.) {};
\end{axis}
\end{tikzpicture}
\end{flushleft}

\begin{flushleft}
\begin{tikzpicture}[line cap=round,line join=round,>=triangle 45,x=1.0cm,y=1.0cm]
\begin{axis}[
x=1.0cm,y=1.0cm,
axis lines=middle,
ymajorgrids=true,
xmajorgrids=true,
xmin=-0.2,
xmax=2.75,
ymin=-0.2,
ymax=2.75,
xtick={-3.0,-2.0,...,3.0},
ytick={-3.0,-2.0,...,3.0},]
\clip(-3.,-3.) rectangle (3.,3.);
\draw [-,line width=1.25pt] (0.,0.) -- (0.,3.);
\draw [-,line width=1.25pt] (0.,0.) -- (3.,0.);
\draw [-,line width=1.25pt] (0.,0.) -- (1.5,3.);
\node[label=right:{\small{${h_3(m)=m_2}$}}] (n1) at (0.5,0.8) {};
\node[label=right:{\small{${h_3(m) = 2m_1} $}}] (n1) at (0,2.5)  {};
\end{axis}
\end{tikzpicture}
\end{flushleft}

\begin{flushleft}
\definecolor{ffqqqq}{rgb}{1.,0.,0.}
\begin{tikzpicture}[line cap=round,line join=round,>=triangle 45,x=1.0cm,y=1.0cm]
\begin{axis}[
x=1.0cm,y=1.0cm,
axis lines=middle,
ymajorgrids=true,
xmajorgrids=true,
xmin=-0.2,
xmax=2.75,
ymin=-0.2,
ymax=2.75,
xtick={-3.0,-2.0,...,3.0},
ytick={-3.0,-2.0,...,3.0},]
\clip(-3.,-3.) rectangle (3.,3.);
\shade[top color = white, bottom color = red] (0.,2.) -- (0.,3.) -- (3.,3.) -- (3.,0.) -- (1.,0.)  -- cycle;
\draw [line width=2.pt,color=ffqqqq] (0.,2.)-- (0.,3.);
\draw [line width=2.pt,color=ffqqqq] (0.,3.)-- (3.,3.);
\draw [line width=2.pt,color=ffqqqq] (3.,3.)-- (3.,0.);
\draw [line width=2.pt,color=ffqqqq] (3.,0.)-- (1.,0.);
\draw [line width=2.pt,color=ffqqqq] (1.,0.)-- (0.,2.);
\node[label=right:{$ {\Delta_4 } $}] (n1) at (0.75,1.) {};
\end{axis}
\end{tikzpicture}
\end{flushleft}

\begin{flushleft}
\begin{tikzpicture}[line cap=round,line join=round,>=triangle 45,x=1.0cm,y=1.0cm]
\begin{axis}[
x=1.0cm,y=1.0cm,
axis lines=middle,
ymajorgrids=true,
xmajorgrids=true,
xmin=-0.2,
xmax=2.75,
ymin=-0.2,
ymax=2.75,
xtick={-3.0,-2.0,...,3.0},
ytick={-3.0,-2.0,...,3.0},]
\clip(-3.,-3.) rectangle (3.,3.);
\draw [-,line width=1.25pt] (0.,0.) -- (0.,3.);
\draw [-,line width=1.25pt] (0.,0.) -- (3.,0.);
\draw [-,line width=1.25pt] (0.,0.) -- (3.,1.5);
\node[label=right:{\small{${h_4(m)=2m_2}$}}] (n1) at (0.46,0.2) {};
\node[label=right:{\small{${h_4(m) = m_1} $}}] (n1) at (0.,1.5)  {};
\end{axis}
\end{tikzpicture}
\end{flushleft} 
\end{multicols}
\hspace{-0.85cm}
Consider the divisor $\D := \Delta_1 \otimes D_1 + \Delta_2 \otimes D_2 +\Delta_3 \otimes D_3 +  \Delta_4 \otimes D_4$ on a normal  variety $Y$, where the $D_i$ are prime divisors. We have $\D  =  \Delta_3 \otimes D_3 +  \Delta_4 \otimes D_4$.  Considering the fan refining these two normal fan, we obtain:
\begin{multicols}{2}
\begin{center}
{\begin{equation*}
\D (m_1, m_2)  = \left\{
																\begin{array}{ll}
																m_2 \otimes D_3 + 2m_2 \otimes D_4 & \mbox{if } (m_1, m_2) \in \delta_1 \\
																m_2  \otimes D_3 + m_1 \otimes D_4 & \mbox{if } (m_1, m_2) \in \delta_2 \\
																2m_1 \otimes D_3 + m_1 \otimes D_4 & \mbox{if } (m_1, m_2) \in \delta_3\\
																\end{array}
																\right. 	
\end{equation*}
\ \\}
\end{center}

\begin{center}
\ \\ 
\begin{tikzpicture}[line cap=round,line join=round,>=triangle 45,x=1.0cm,y=1.0cm]
\begin{axis}[
x=1.0cm,y=1.0cm,
axis lines=middle,
ymajorgrids=true,
xmajorgrids=true,
xmin=-0.2,
xmax=2.75,
ymin=-0.2,
ymax=2.75,
xtick={-3.0,-2.0,...,3.0},
ytick={-3.0,-2.0,...,3.0},]
\clip(-3.,-3.) rectangle (3.,3.);
\draw [-,line width=1.25pt] (0.,0.) -- (0.,3.);
\draw [-,line width=1.25pt] (0.,0.) -- (3.,0.);
\draw [-,line width=1.25pt] (0.,0.) -- (1.5,3.);
\draw [-,line width=1.25pt] (0.,0.) -- (3.,1.5);
\node[label=right:{$\delta_1$}] (n1) at (1.2,0.25) {};
\node[label=right:{$\delta_2$}] (n1) at (1.2,1.5)  {};
\node[label=right:{$\delta_3$}] (n1) at (0.,2.35)  {};
\end{axis}
\end{tikzpicture}
\end{center}
\end{multicols}
\end{example}

\subsection{Altmann-Hausen presentation} \label{SubsectionCAH}

Let us present the main results of \cite{Alt} about the geometrico-combinatorial presentation of normal affine $\C$-varieties  endowed with a torus action. 

\begin{definition} \label{semiproj} 
A  $\C$-variety $Y$ is said to be \textit{semi-projective} if its $\C$-algebra of global functions $ \H^0(Y, \mathcal{O})$ is finitely generated and $Y$ is projective over $Y_0=\Spec(\H^0(Y, \mathcal{O}))$.
\end{definition}

\begin{remark}
Note that affine varieties and projective varieties are semi-projective. A semi-projective variety is quasi-projective. Indeed,  $Y \to Y_0$ is a projective morphism, moreover $Y_0$ is an affine variety (so quasi-projective).   Then the morphism $Y \to \Spec(\C)$ is quasi-projective. 
\end{remark}

Let $X$  be a $\C$-variety   endowed with an    action of the torus $\T=\Spec(\C[M])$ of  \textit{weight} \textit{cone} $\omega_M \subset M_{\Q}$. We   write  $\C[X] = \bigoplus_{m \in \omega_M \cap M} \C[X]_m$. For all $m \in \omega_M  \cap M  $, we denote:   
\begin{equation*}
\C(X)_m := \left\{ \dfrac{f}{g} \ \Big\lvert  \ \exists k \in M, \ f \in \C[X]_{m+k}, \ \ g \in \C[X]_k \right\} \subset \C(X).
\end{equation*}

Let $Y$ be a normal semi-projective variety, let $\omega_N$ be a pointed cone in $ N_{\Q}$ and let $\D = \sum_{Z} \Delta_Z \otimes Z$ be an $\omega_N$-pp-divisor on $Y$.  By  \cite[proposition 2.11]{Alt},  for all $m, \ m' \ \in M \cap \omega_N^{\vee}$, we have
$\D(m+m') \geq \D(m) + \D(m')$.  
 So,   for all $m, \ m' \ \in \omega_N^{\vee} \cap M$, we have a map:  
\begin{equation*}
\H^0 \left( Y, \mathcal{O}_Y( \D(m)) \right) \otimes \H^0 \left( Y, \mathcal{O}_Y( \D(m')) \right) \to \H^0 \left( Y, \mathcal{O}_Y( \D(m+m')) \right).
\end{equation*}
This ensures that the $\H^0 \left( Y, \mathcal{O}_Y \right)$-sub-modules $\H^0 \left( Y, \mathcal{O}_Y( \D(m)) \right)$ of  $\C(Y)$ can be put together into an $M$-graded $\C$-algebra:
\begin{equation*}
A[Y,\D] := \bigoplus_{m \in \omega_N^{\vee} \cap M} \H^0 \left( Y, \mathcal{O}_Y( \D(m)) \right) \mathfrak{X}_m \subset \C(Y) [M],
\end{equation*}
where $\mathfrak{X}_m $ is an indeterminate of weight $m$. We denote by $X{[Y, \D]} := \Spec(A[Y, \D])$ the associated   $ \T$-scheme. The general idea of the construction of Altmann-Hausen   is to identify $\C(X)_0$ with $\C(Y)$ for some semi-projective variety $Y$, and use an appropriate pp-divisor on $Y$ to construct the grading of $\C[X]$ via an identification between $ \C[X]_m $ and $ \H^0 \left( Y, \mathcal{O}_Y( \D(m)) \right) \mathfrak{X}_m$.

\begin{theorem} \cite[Theorems 3.1 and 3.4]{Alt}. \label{thmAlt}  
Fix a torus $\T$. Let $M$ its character lattice. 
\begin{enumerate}[leftmargin=0.75cm, label=(\roman*)]
\item Let $Y$ be a normal semi-projective variety, let $\omega_N$ be a pointed cone in $ N_{\Q}$, and let $\D$ be a $\omega_N$-pp-divisor on $Y$. 
The affine scheme $X{[Y, \D]} $  is a normal variety, of dimension $\text{dim}(Y) + \text{dim}(\T)$, endowed with a   $\T$-action of weight cone $\omega_N^{\vee}$.
\item Conversely, let $X$ be an affine normal variety endowed with a  $\T$-action, and  let $\omega_N$ be the cone in $N_{\Q}$ dual to the weight cone. There exists a normal semi-projective variety $Y$ and a $\omega_N$-pp-divisor $\D$ on $Y$ such that the graded $\C$-algebras $\C[X]$ and $A[Y, \D]$ are isomorphic. 
\end{enumerate}
\end{theorem}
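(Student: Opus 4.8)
The plan is to treat the two directions of the statement separately, following the structure of the $A[Y,\D]$ construction.

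For part (i), I would first verify that $A[Y,\D]$ is a well-defined $M$-graded domain carrying the asserted $\T$-action. The inequality $\D(m+m') \geq \D(m) + \D(m')$ recorded above from \cite[Proposition 2.11]{Alt} guarantees that if $f \in \H^0(Y,\mathcal{O}_Y(\D(m)))$ and $g \in \H^0(Y,\mathcal{O}_Y(\D(m')))$, then $\Div_Y(fg) + \D(m) + \D(m') \geq 0$, so $fg$ lies in $\H^0(Y,\mathcal{O}_Y(\D(m)+\D(m'))) \subseteq \H^0(Y,\mathcal{O}_Y(\D(m+m')))$; thus the graded pieces multiply correctly inside the group algebra $\C(Y)[M]$, which is a domain, whence so is $A[Y,\D]$. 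The $M$-grading then produces a $\T$-action on $\Spec(A[Y,\D])$, and its weight monoid is $\{m \in \omega_N^\vee \cap M : \H^0(Y,\mathcal{O}_Y(\D(m))) \neq 0\}$. I would argue that semi-ampleness of $\D(m)$ makes $\D(nm)$ have nonzero sections for $n$ large, so these weights are cofinal and the cone they span is exactly $\omega_N^\vee$.

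Next come the three substantive properties. For the dimension I would compute the transcendence degree of $\Frac(A[Y,\D])$: bigness of $\D(m)$ for $m \in \text{Relint}(\omega_N^\vee)$ forces $\C(Y)$ to embed into the degree-zero part, and $n = \dim(\T)$ independent characters contribute $n$ further algebraically independent generators, giving $\dim(Y) + \dim(\T)$. Normality I would obtain by writing $A[Y,\D]$ as an intersection of valuation rings: each $\H^0(Y,\mathcal{O}_Y(\D(m)))$ is cut out by the reflexive (Weil-divisor) conditions $\Div_Y(f) + \D(m) \geq 0$, so $A[Y,\D]$ is integrally closed inside $\C(Y)[M]$, and one then checks it is integrally closed in its own fraction field. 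The main obstacle in this direction is finite generation of $A[Y,\D]$; here I would exploit that $Y$ is semi-projective, so $Y_0 = \Spec(\H^0(Y,\mathcal{O}_Y))$ is affine and $Y \to Y_0$ is projective, and combine this with semi-ampleness (which makes each $\D(m)$ eventually base-point-free) to reduce to the finite generation of section rings of semi-ample divisors on a variety projective over an affine base, via a Rees-algebra and $\Proj$ argument.

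For part (ii), the strategy is the toric downgrading. Choosing finitely many homogeneous generators of $\C[X]$ yields a $\T$-equivariant closed immersion $X \hookrightarrow \AC^N$, where $\T$ embeds into $\GC^N$ through the weights of the generators and $\GC^N$ acts diagonally. I would then construct $Y$ as the normalization of a Chow (or GIT) quotient of the closure of $X$ by $\T$ inside the toric variety $\AC^N$, so that $\C(Y)$ is identified with the degree-zero subfield $\C(X)_0$ and $Y$ is normal and semi-projective. The polyhedral coefficients are read off from valuations: for each prime divisor $Z \subset Y$ the polyhedron $\Delta_Z \in \text{Pol}_{\omega_N}^+(N_\Q)$ is defined so that its support function sends $m$ to the minimal order of vanishing along $Z$ of the semi-invariants of weight $m$, producing divisors $\D(m)$ with $\C[X]_m \cong \H^0(Y,\mathcal{O}_Y(\D(m)))\mathfrak{X}_m$; assembling these gives the graded isomorphism $\C[X] \cong A[Y,\D]$. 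The hardest point here is to produce $Y$ with the correct semi-projective structure and to check that the extracted polyhedral data genuinely satisfies the Cartier, semi-ample and big axioms of a proper polyhedral divisor, rather than merely defining a polyhedral divisor; this is precisely where the quotient construction and the behaviour of semi-ampleness under the quotient map must be invoked, and it encodes the hypothesis that $\C[X]$ is a finitely generated normal domain.
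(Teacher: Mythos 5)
Your proposal cannot be compared against a proof in this paper, because the paper gives none: Theorem \ref{thmAlt} is quoted as a black box from \cite{Alt} (Theorems 3.1 and 3.4). The meaningful comparison is therefore with Altmann--Hausen's original arguments and with this paper's proof of the real analogue, Theorem \ref{ResultT2}. For part (i), your sketch reproduces the Altmann--Hausen architecture faithfully: superadditivity $\D(m+m')\geq\D(m)+\D(m')$ makes $A[Y,\D]$ a graded subalgebra of $\C(Y)[M]$, bigness controls the fraction field and hence the dimension, the valuative description of $\H^0(Y,\mathcal{O}_Y(\D(m)))$ gives normality, and semi-projectivity plus semi-ampleness carry the finite-generation argument. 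For part (ii), however, you are not following the published proof of \cite[Theorem 3.4]{Alt}, which constructs $Y$ intrinsically as the normalization of a limit of GIT quotients over the GIT fan, with no embedding chosen; instead you embed $X$ equivariantly into $\AC^N$ and downgrade the big-torus action, which is the recipe of \cite[\S 11]{Alt} supplemented by the pull-back results that appear only in the arXiv version \cite{Alt03}. This is a legitimate alternative, and it is in fact exactly the route this paper takes in the real setting: Proposition \ref{GaloisToricDowngrading} produces the equivariant embedding, and Steps 0--3 of the proof of Theorem \ref{ResultT2} build the quotient toric variety, the normalized closure $\tilde{Y}_X$, and the pulled-back pp-divisor $\D_X$ precisely as you describe. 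The intrinsic GIT construction buys canonicity; the downgrading buys concreteness and, crucially for this paper, compatibility with Galois descent.

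Two points in your part (ii) would need repair to stand alone. First, defining $\Delta_Z$ by ``the minimal order of vanishing along $Z$ of the semi-invariants of weight $m$'' is not well posed as written: a semi-invariant $f\in\C[X]_m$ is a function on $X$, not on $Y$, so $\mathrm{ord}_Z(f)$ has no meaning. One must first fix a monoid morphism $u:\omega_M\cap M\to\C(X)^*$ with $u(m)\in\C(X)_m$ and measure $\mathrm{ord}_Z\bigl(f/u(m)\bigr)$; the paper does this by taking $u(m)=\chi^{s^*(m)}$ for a lattice section $s^*$ (Remark \ref{FracsectionR} and Step 0 of the proof of Theorem \ref{ResultT2}), and the resulting $\D$ is then canonical only up to a principal polyhedral divisor --- this dependence on the section is exactly why the cocycle $h$ appears in the paper's real statements. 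Second, the two steps you yourself flag as hardest --- finite generation of $A[Y,\D]$ in (i), and the verification that the extracted data on $\tilde{Y}_X$ is genuinely Cartier, semi-ample and big (i.e.\ that pulling a pp-divisor back along $\tilde{Y}_X\to Y$ yields a pp-divisor) in (ii) --- are the actual mathematical core, and are what the paper imports wholesale from \cite[Theorems 3.1 and 3.4]{Alt} and \cite[Propositions 8.1 and 8.5]{Alt03}. Deferring them is acceptable when restating a cited theorem, as the paper does, but a self-contained proof would have to supply them.
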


\begin{example} \label{RWA3'} Consider the Example \ref{RWA3}. The affine variety $\AC^3$ endowed with the action of $\GC^2$ given by $(s,t)\cdot (x,y,z)= (sx, ty, stz)$ is described by  
a semi-projective variety $Y:= \mathbb{P}_{\C}^1 = \AC^1 \cup \{ \infty \}$, and
a pp-divisor on $Y$ defined by $\D := \Delta \otimes \{ \infty \}$, 
 where $\Delta$ is the    polyhedral defined below. Using Example \ref{Eval}, we have:  
\begin{multicols}{2} 

\begin{center}
\ \vspace{-0.5cm}
\begin{equation*}
{\D}(m_1, m_2)  = \left\{
																\begin{array}{ll}
																m_2 \otimes \{ \infty \} & \mbox{if } (m_1, m_2) \in \delta_1 \\
																m_1 \otimes \{ \infty \} & \mbox{if } (m_1, m_2) \in \delta_2\\
																\end{array}
																\right. 	
\end{equation*}
\ \vspace{0.5cm}
\end{center}

\begin{center}
\definecolor{ffqqqq}{rgb}{1.,0.,0.}
\begin{tikzpicture}[line cap=round,line join=round,>=triangle 45,x=1.0cm,y=1.0cm]
\begin{axis}[
x=1.0cm,y=1.0cm,
axis lines=middle,
ymajorgrids=true,
xmajorgrids=true,
xmin=-0.2,
xmax=2.25,
ymin=-0.2,
ymax=2.25,
xtick={-3.0,-2.0,...,3.0},
ytick={-3.0,-2.0,...,3.0},]
\clip(-3.,-3.) rectangle (3.,3.);
\shade[top color = white, bottom color = red]  (0.,1) -- (0.,2.5) -- (2.5,2.5) -- (2.5,0.) -- (1.,0.)  -- cycle;
\draw [line width=2.pt,color=ffqqqq] (0.,1)-- (0.,2.5);
\draw [line width=2.pt,color=ffqqqq] (0.,2.5)-- (2.5,2.5);
\draw [line width=2.pt,color=ffqqqq] (2.5,2.5)-- (2.5,0.);
\draw [line width=2.pt,color=ffqqqq] (2.5,0.)-- (1.,0.);
\draw [line width=2.pt,color=ffqqqq] (1.,0.)-- (0.,1);
\node[label=right:{$ {\Delta } $}] (n1) at (0.75,1.) {};
\end{axis}
\end{tikzpicture}
\end{center}
\end{multicols}
\end{example}

\section{Altmann-Hausen presentation for normal affine $\R$-varieties} \label{SectionRAHP}

\subsection{Equivariant toric downgrading} \label{SubsectionETD}

Given an action of a complex torus $\T$ on a normal affine $\C$-variety $X$, Altmann and Hausen indicate in  \cite[\S 11]{Alt} a recipe  {on}   {how} to determine a semi-projective variety  $Y_X$ and a pp-divisor $\D_X$ mentioned in Theorem \ref{thmAlt}. The idea is to embed $\T$-equivariantly $X$ into a toric variety $\AC^n$    such that $X$   intersects  {the}  dense open  {orbit} of $\AC^n$ for the natural $\GC^n$-action. They construct a normal semi-projective variety $Y$ and a pp-divisor $\D$ describing the $\T$-action   on $\AC^n$. From these data, they obtain $Y_X$ and $\D_X$ describing the $\T$-action  on $X$. 

In this section, we describe a $\T$-equivariant  embedding $X \hookrightarrow \AC^n$ which is also $\Gamma$-equivariant (Proposition \ref{GaloisToricDowngrading}), and we use this embedding  to  {extend} the Altmann-Hausen presentation to the case of real torus actions on  affine $\R$-varieties (Theorems \ref{ResultT} and \ref{ResultT2}).

\begin{proposition}  \label{GaloisToricDowngrading}   
Let $X$ be an affine $\C$-variety    endowed with an  action of $\T$,  let $M:= \Hom_{gr}(\T, \GC)$, and let $d$ be the rank of $M$.  Let $\sigma$ be an $\R$-structure on $X$, and let $\tau$ be an $\R$-group structure on $\T$. If the real torus $(\T, \tau)$ acts on $(X, \sigma)$, then there exist  $n \in \N, n \geq d$ such that:
\begin{enumerate}[leftmargin=0.75cm, label=(\roman*)]
\item There is an  $\R$-group  structure $ \tau' $ on $\GC^n $   that extends to an  $\R$-structure $ \sigma' $ on $\AC^n $; 
\item $(\T, \tau)$ is a closed  {subgroup} of $(\GC^n,  {\tau}')$; and
\item $(X, \sigma)$ is a closed subvariety of   $(\AC^n,  {\sigma}')$   and   $(X, \sigma) \hookrightarrow (\AC^n,  {\sigma}')$ is  $(\T, \tau)$-equivariant. Moreover, $X$ intersects  the dense open orbit of   $\AC^n$  for the natural $\GC^n$-action, and the weight cone of $\AC^n$ is the weight cone of $X$.
\end{enumerate}
\end{proposition}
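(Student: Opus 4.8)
The plan is to upgrade the toric downgrading of Altmann--Hausen (\cite[\S 11]{Alt}) so that the equivariant embedding $X \hookrightarrow \AC^n$ is simultaneously $\Gamma$-equivariant. In the classical construction one picks finitely many $\T$-homogeneous generators $g_1, \dots, g_n$ of $\C[X]$, with $g_i$ of weight $m_i \in \omega \cap M$, and uses $y_i \mapsto g_i$ to realize $X$ as a closed $\T$-subvariety of $\AC^n = \Spec \C[y_1, \dots, y_n]$, where $\GC^n$ acts with weights $m_1, \dots, m_n$ and $\T \hookrightarrow \GC^n$ is dual to the homomorphism $M' = \Z^n \to M$, $e_i \mapsto m_i$ (surjective because the action is effective, so the weights generate $M$). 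The one new ingredient I would add is to choose these generators compatibly with the given real structure, and the key tool for this is Lemma \ref{IsomGradedStructure}, which tells us that the antiregular involution $\sigma^\sharp$ of $\C[X]$ sends $\C[X]_m$ to $\C[X]_{\tilde{\tau}(m)}$.

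Concretely, I would first take any finite set of homogeneous generators and enlarge it to a $\sigma^\sharp$-stable one by adjoining the conjugates $\sigma^\sharp(g_i)$; by Lemma \ref{IsomGradedStructure} these are again homogeneous (of weight $\tilde{\tau}(m_i)$), so the enlarged set $g_1, \dots, g_n$ still consists of $\T$-homogeneous generators whose weights generate $M$, and $\sigma^\sharp$ now permutes them, $\sigma^\sharp(g_i) = g_{\pi(i)}$, for an involution $\pi$ of the index set (after rescaling the $\sigma^\sharp$-fixed generators to be real). I would then define $\tau'$ on $\GC^n$ by letting its lattice involution $\tilde{\tau}'$ be the basis permutation $e_i \mapsto e_{\pi(i)}$ of $M' = \Z^n$. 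Since $m_{\pi(i)} = \tilde{\tau}(m_i)$, the surjection $M' \to M$ intertwines $\tilde{\tau}'$ and $\tilde{\tau}$, which gives (ii): $(\T, \tau) \hookrightarrow (\GC^n, \tau')$ is a closed subgroup of real tori. As $\tilde{\tau}'$ is a plain permutation of the basis (no sign changes), it preserves the monoid $\N^n$, so $\tau'$ is a product of copies of $\tau_0$ and $\tau_2$ and extends to the $\R$-structure $\sigma'$ on $\AC^n$ with antiregular comorphism determined by $y_i \mapsto y_{\pi(i)}$; this is (i).

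For (iii), the surjection $\C[y_1, \dots, y_n] \to \C[X]$, $y_i \mapsto g_i$, is closed and, by the weight identity $m_{\pi(i)} = \tilde{\tau}(m_i)$, it is $\T$-equivariant; it is moreover $\R$-equivariant, because $\sigma^\sharp(g_i) = g_{\pi(i)}$ is exactly the image of $\sigma'^\sharp(y_i)$, so the embedding is an $\R$-morphism $(X, \sigma) \hookrightarrow (\AC^n, \sigma')$. That $X$ meets the dense $\GC^n$-orbit is automatic: $X$ is integral and each $g_i \neq 0$, so $g_1 \cdots g_n$ does not vanish identically and the locus where all coordinates are nonzero is a nonempty open subset of $X$. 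Finally, the weight cone of the $\T$-action on $\AC^n$ is $\mathrm{Cone}(m_1, \dots, m_n)$, which equals the weight cone $\omega$ of $X$ since the $m_i$ generate the weight monoid.

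The only genuine obstacle is the compatibility arranged in the second step; everything else is the classical downgrading read off $\Gamma$-equivariantly. It is worth stressing why this works even when $\tau$ has $\s^1$-factors, for which $\tilde{\tau} = -\mathrm{id}$ and $\tau_1$ does not extend to affine space: a homogeneous generator of nonzero weight $m$ and its conjugate of weight $-m = \tilde{\tau}(m)$ get swapped by $\pi$, so the corresponding rank-two piece of $\tau'$ is $\tau_2$ rather than $\tau_1$. That is, the construction automatically embeds an $\s^1$-factor into a $\RC(\GC)$-factor, exactly as in Example \ref{importantex}, and it is precisely this that makes $\tau'$ extendable to $\AC^n$.
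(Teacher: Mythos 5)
Your proposal is correct and follows essentially the same route as the paper's proof: take homogeneous generators, enlarge them to a $\sigma^{\sharp}$-stable set using Lemma \ref{IsomGradedStructure}, let the resulting involutive permutation of the generators define $\tau'$ on $\GC^n$ and $\sigma'$ on $\AC^n$, and verify that the surjection $\C[x_1,\dots,x_n]\to\C[X]$ is both $(\T,\tau)$- and $\Gamma$-equivariant, with the dense-orbit and weight-cone claims following from $x_i\notin\mathfrak{a}$. Your closing remark that the permutation has no sign changes --- so $\tau'$ is automatically a product of $\tau_0$'s and $\tau_2$'s, which is exactly why it extends to $\AC^n$ even when $\tau$ has $\mathbb{S}^1$-factors --- is left implicit in the paper but is a correct and clarifying addition.
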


\begin{proof}
 \textit{(i)} The algebra $\C[X]$ is finitely generated, so we can write  $\C[X] = \C[\tilde{g}_1,  \dots , \tilde{g}_k]$ with $\tilde{g}_i \in \C[X] \backslash \{ 0 \}$. Since  $\C[X] = \bigoplus_{m \in M} \C[X]_m$, there exists homogeneous elements $\tilde{g}_{i,j}$ such that $\tilde{g}_i = \tilde{g}_{i, 1} + \dots + \tilde{g}_{i, k_i}$,  hence $\C[X] = \C[\tilde{g}_{i,j}]$. Note that $\C[X]= \C[\tilde{g}_{i,j}, {\sigma}^{\sharp}(\tilde{g}_{i,j}) ]$.  Moreover, by Lemma \ref{IsomGradedStructure}, an homogeneous element is send to an homogeneous element by ${\sigma}^{\sharp}  $. Hence we can assume that there exists $n \in \N$ such that $\C[X] = \C[ {g}_1,  \dots ,  {g}_n]$,  where the $g_i$ are homogeneous of degree $m_i \in M$ and such that the set $\{ g_i \ | \ 1 \leq i \leq n\}$ is stable under the involution   ${\sigma}^{\sharp}$. 
Let $\tau' $ and $ {\sigma'} $ be the maps induced by the  antilinear maps     
${\tau'}^{\sharp} (x_i) = x_j$, 
	  and $ {\sigma'}^{\sharp}  (x_i) = x_j$, where ${\sigma}^{\sharp}(g_i) = g_j$.  This induces  an $\R$-group structure on $\GC^n = \Spec( \C[x_1^{\pm1}, \dots, x_n^{\pm1}])$ and a   $\R$-structure  on $\AC^n = \Spec( \C[x_1, \dots, x_n])$.

\smallskip

 \textit{(ii)} The $\C$-algebra  morphism
$ \psi :  \C[x_1^{\pm 1},  \dots , x_n^{\pm 1}]    \to  \C[M],  \ 
 x_i   \mapsto \chi^{m_i}$ 
is surjective since the $\T$-action    on $X$ is effective.  Since $(\T, \tau)$ acts on $(X, \sigma)$, 
$\psi$ is $\Gamma$-equivariant. So,  the   $\R$-algebra morphism   $\psi^{\Gamma} : \C[x_1^{\pm 1},  \dots , x_n^{\pm 1}]^{\Gamma}    \to  \C[M]^{\Gamma}$ is well defined and surjective.  Hence, $(\T, \tau)$ is a closed subgroup of $(\GC^n,  {\tau'})$. 

\smallskip

 \textit{(iii)} The $\C$-algebra morphism   
$\varphi : \C[x_1,  \dots , x_n]    \to \C[X], \
 x_i   \mapsto g_i
$
is surjective and induces a $\C$-algebra isomorphism  $\C[g_1,  \dots , g_n] \cong  {\C[x_1,  \dots , x_n]}/{\mathfrak{a}}$, with $ \mathfrak{a} = \text{Ker}(\varphi)$.   {Moreover}, the morphism $\varphi$ is $\Gamma$-equivariant. So,  the   $\R$-algebra morphism  $\varphi^{\Gamma} : \C[x_1,  \dots , x_n]^{\Gamma}    \to \C[X]^{\Gamma}$ is well defined and surjective. Hence,   $(X, \sigma)$ is a closed subvariety of   $(\AC^n,  {\sigma'})$.

 Note that $\varphi$ is $\T$-equivariant, so the closed immersion    $X \hookrightarrow \AC^n$ is $\T$-equivariant. 
Moreover, the comorphism of   the  $\T$-action   on $\AC^n$ is given by:
\begin{equation*}
\tilde{\mu}^{\sharp}  :\C[x_1, \dots, x_n]      \to \C[M] \otimes  \C[x_1, \dots, x_n],   \ \ \ 
 x_i  \mapsto \chi^{m_1} \otimes x_i
\end{equation*}
Then,  the  following diagram  commutes:
\begin{center}
\begin{tikzpicture}
\matrix (m) [matrix of math nodes,row sep=1.25em,column sep=4em,minimum width=2em]
 {
              & \C[\AC^n] \vphantom{\otimes \C[\AC^n]} &                   &  \C[M] \otimes \C[\AC^n] \\
 \C[\AC^n] \vphantom{\otimes \C[\AC^n]}  &              & \C[M] \otimes \C[\AC^n] &      \\
             &              &                   &        \\
				     &              &                   &        \\
	           &       \C[X]  \vphantom{  \C[M] \otimes}  &                   & \C[M] \otimes \C[X] \\
      \C[X]  \vphantom{  \C[M] \otimes}     &              & \C[M] \otimes \C[X] &     \\};
 \path[-stealth]
    (m-1-2) edge node  [above] {   {$\tilde{\mu}^{\sharp}$} } (m-1-4)
    (m-2-1) edge node  [above] { \ \ \ \ \ \ \ \ \ \ \ \ \ \ \  {$\tilde{\mu}^{\sharp} $ } } (m-2-3)
	  (m-5-2) edge [dashed] node  [below] {  {$\mu^{\sharp} \ \ \ \ \ \ \ \ \ \ \ \ \ \ \ \  $} } (m-5-4)
    (m-6-1) edge node  [below] {  {$\mu^{\sharp} $} } (m-6-3)
		(m-1-4) edge node  [left] { {${\tau}^{\sharp} \times {\sigma'}^{\sharp} $}  \ \ \ \  } (m-2-3)
    (m-5-4) edge node  [right] { \ \ \ \  {${\tau}^{\sharp} \times  {\sigma}^{\sharp} $}} (m-6-3)
    (m-5-2) edge [dashed] node  [above] { {${\sigma}^{\sharp} \ \ \ \ \ $}}  (m-6-1)
		(m-1-2) edge node  [above] { {${\sigma'}^{\sharp} \ \ \ \ \ \ \ $}}  (m-2-1)
    (m-2-1) edge node  [left] { {$\varphi$}} (m-6-1)
    (m-1-2) edge [dashed] node  [left] {    {$\varphi$} } (m-5-2)
	  (m-1-4) edge   node  [right] {  $id \times \varphi$ }(m-5-4)
		(m-2-3) edge node  [right] {   ${id \times \varphi}_{\vphantom{N_N} }$ }   (m-6-3);
\end{tikzpicture}
\end{center}
Hence, the morphism $\varphi$ is $(\T, \tau)$-equivariant, so $(X, \sigma)$ is a closed subvariety of $(\AC^n,  {\sigma'})$, and $(X, \sigma) \hookrightarrow (\AC^n,  {\sigma'})$ is $(\T, \tau)$-equivariant.  

{Finally},  {note}  {that}  {for}  {all} $i \in \{1 , \dots, n \}$, $x_i \notin \mathfrak{a}$, hence $X$  {intersects}  {the}  {dense} open {orbit} {of} $\GC^n$. It follows that  the weight cone of $\AC^n$ is the weight cone of $X$. 
\end{proof}

\begin{example} \label{RWA3''}  We pursue Example  \ref{RWA3'}. The action of $(\GC^2, \tau_2)$ on $(\AC^3, \sigma')$ comes from the $\Gamma$-equivariant inclusion of $(\GC^2, \tau_2)$ in $(\GC^3, \tau')$ given by $(s,t) \mapsto (s,t,st)$, where $\tau'$ is the $\R$-group structure defined by $\tau' = \tau_2 \times \tau_0$ (see Example  \ref{RW3t}). We denote by $M$ and $M'$ the character lattices of $\GC^2$ and $\GC^3$ respectively. Then, we obtain the diagrams of Remark \ref{FracsectionR} with:
\begin{multicols}{5}
\footnotesize{\begin{center}
$F := \begin{bmatrix}
1 & 0 \\
0 & 1 \\
1 & 1 \\
\end{bmatrix}$
\end{center}

\begin{center}
{ \ \\ 
$P := \begin{bmatrix}
-1 & -1 & 1 \\
\end{bmatrix}$}
\end{center}

\begin{center}
{ \ \\
\vspace{-0.22cm}
$\hat{\tau}_2 := \begin{bmatrix}
0 & 1 \\
1 & 0 \\
\end{bmatrix}$}
\ \\
\end{center}

\begin{center}
$\hat{\tau}' := \begin{bmatrix}
0 & 1 & 0 \\
1 & 0 & 0 \\
0 & 0 & 1 \\
\end{bmatrix}$
\end{center}

\begin{center}
{ \ \\
$\hat{\tau}_Y := \begin{bmatrix}
1 \\
\end{bmatrix}$}
\end{center}}
\end{multicols}
\end{example}

\subsection{Real torus actions on  normal affine $\R$-varieties} \label{SubsectionRAH}

We present the main theorical results of this article concerning the  presentation of  affine $\R$-varieties endowed with  real torus actions:

\begin{theorem}  \label{ResultT}   
Let $(\T, \tau)$ be a real torus, let $M:= \Hom_{gr}(\T, \GC)$,   and 
  let $ (Y, \sigma_Y) $ be a   normal semi-projective $\R$-variety. 
 Let $\omega_{N}$ be a pointed {cone} in $N_{\Q}$, and let $\D$ be an $\omega_{N}$-pp-divisor on $Y$.   
Assume that there exists a  {monoid}   {morphism} $h :   \omega_{N}^{\vee} \cap M \to \C(Y)^*$ such that 
\begin{equation}  \let\veqno\eqno \label{eqdiv}
\forall m \in \omega_{N}^{\vee} \cap M, \  \sigma_Y^*(\D(m)) = \D(\tilde{\tau}(m)) + \Div_Y(h(\tilde{\tau}(m)))  \text{ \ \  and \ \ } h(m) \sigma_Y^{\sharp}(h(\tilde{\tau}(m)))=1,
\end{equation}
then there exists an $\R$-structure $\sigma_{X[Y, \D]}$ on the normal affine variety $X{[Y, \D]} $ such that  $(\T, \tau)$ acts   on $({X[Y, \D]}, \sigma_{X[Y, \D]})$.
\end{theorem}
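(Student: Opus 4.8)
The plan is to produce the $\R$-structure $\sigma_{X[Y, \D]}$ by exhibiting its comorphism directly, namely an antilinear involutive ring automorphism $\Sigma$ of the $M$-graded algebra $A[Y,\D] = \bigoplus_{m \in \omega_N^\vee \cap M} \H^0(Y, \mathcal{O}_Y(\D(m)))\,\mathfrak{X}_m$, and then to read off compatibility with the $\T$-action from the criterion of Lemma \ref{defaction}. The natural guess, dictated by the shape of the hypotheses, is to twist the antilinear map $\sigma_Y^\sharp$ on $\C(Y)$ by the function $h$; concretely I would set, on each homogeneous piece,
$$\Sigma(f\,\mathfrak{X}_m) := \sigma_Y^\sharp(f)\,h(\tilde{\tau}(m))\,\mathfrak{X}_{\tilde{\tau}(m)}, \qquad f \in \H^0(Y, \mathcal{O}_Y(\D(m))),$$
and extend $\Sigma$ antilinearly to all of $A[Y,\D]$.

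First I would check that $\Sigma$ is well defined, i.e.\ that $\sigma_Y^\sharp(f)\,h(\tilde{\tau}(m))$ lies in $\H^0(Y, \mathcal{O}_Y(\D(\tilde{\tau}(m))))$. Using the identity $\Div_Y(\sigma_Y^\sharp(f)) = \sigma_Y^*(\Div_Y(f))$ together with the first relation of \eqref{eqdiv}, one computes
$$\Div_Y\!\big(\sigma_Y^\sharp(f)\,h(\tilde{\tau}(m))\big) + \D(\tilde{\tau}(m)) = \sigma_Y^*\big(\Div_Y(f) + \D(m)\big),$$
which is effective since $f$ is a section of $\mathcal{O}_Y(\D(m))$ and $\sigma_Y^*$ preserves effectivity. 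Multiplicativity of $\Sigma$ then reduces to three facts that hold by construction: $\sigma_Y^\sharp$ is a ring homomorphism of $\C(Y)$, the lattice involution $\tilde{\tau}$ is additive, and $h$ is a monoid morphism; antilinearity is immediate.

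The key point, and the place where the second relation of \eqref{eqdiv} enters, is involutivity. Applying $\Sigma$ twice to $f\,\mathfrak{X}_m$ and using $\tilde{\tau}^2 = \mathrm{id}$ and $(\sigma_Y^\sharp)^2 = \mathrm{id}$ gives
$$\Sigma^2(f\,\mathfrak{X}_m) = f\,h(m)\,\sigma_Y^\sharp(h(\tilde{\tau}(m)))\,\mathfrak{X}_m,$$
which equals $f\,\mathfrak{X}_m$ exactly because $h(m)\,\sigma_Y^\sharp(h(\tilde{\tau}(m))) = 1$. Thus $\Sigma$ is an antilinear ring involution of $A[Y,\D]$, hence the comorphism of an $\R$-structure $\sigma_{X[Y,\D]}$ on $X[Y,\D]$.

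Finally I would verify equivariance. Since $\Sigma$ sends the weight-$m$ graded piece $A[Y,\D]_m$ to the weight-$\tilde{\tau}(m)$ piece, the identity $\sigma_{X[Y,\D]}^\sharp(A[Y,\D]_m) = A[Y,\D]_{\tilde{\tau}(m)}$ combined with $\mu^\sharp(f) = \chi^m \otimes f$ for homogeneous $f$ and with $\tau^\sharp(\chi^m) = \chi^{\tilde{\tau}(m)}$ yields $\mu^\sharp \circ \sigma_{X[Y,\D]}^\sharp = (\tau^\sharp \otimes \sigma_{X[Y,\D]}^\sharp) \circ \mu^\sharp$; this is precisely the commutativity required in Lemma \ref{defaction}, so $(\T,\tau)$ acts on $(X[Y,\D], \sigma_{X[Y,\D]})$. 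I expect the only genuinely delicate step to be the well-definedness computation, that is the divisor identity and the effectivity of $\sigma_Y^*$, since once that is in place both multiplicativity and the involution identity are forced by the two relations in \eqref{eqdiv}, the second being tailored exactly so that $\Sigma^2 = \mathrm{id}$.
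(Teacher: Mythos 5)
Your proof is correct and takes essentially the same route as the paper: the paper's proof defines exactly the same maps $\alpha_m : f\,\mathfrak{X}_m \mapsto \sigma_Y^{\sharp}(f)\,h(\tilde{\tau}(m))\,\mathfrak{X}_{\tilde{\tau}(m)}$, assembles them into an involution of $A[Y,\D]$ giving the $\R$-structure, and verifies the same commutative diagram against Lemma \ref{defaction}. The only difference is that you make explicit the well-definedness computation $\Div_Y\bigl(\sigma_Y^{\sharp}(f)h(\tilde{\tau}(m))\bigr) + \D(\tilde{\tau}(m)) = \sigma_Y^{*}\bigl(\Div_Y(f)+\D(m)\bigr)$, the multiplicativity, and the involutivity forced by $h(m)\sigma_Y^{\sharp}(h(\tilde{\tau}(m)))=1$, all of which the paper compresses into the assertion that condition (\ref{eqdiv}) makes the $\alpha_m$ collect into an involution.
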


\begin{remark}  The datum $(Y, \D)$ is used to construct the affine $\T$-variety $X[Y, \D]$. The monoid morphism $h$ is the additional datum that encodes the real structure $\sigma_{X[Y, \D]}$ on $X[Y, \D]$ such that $(X[Y, \D], \sigma_{X[Y, \D]})$ is a $(T, \tau)$-variety. 
\end{remark}

\begin{remark} \label{FunctionField}
Since $Y$ is an integral scheme, for any affine open subset $U \subset Y$, the ring  $\mathcal{O}_Y(U)$ is   integral   and $\C(Y)=\Frac(\mathcal{O}_Y(U))$. Hence, $\sigma_Y$ induces an $\R$-field automorphism denoted $\sigma_Y^{\sharp} : \C(Y) \to \C(Y)$.  The field of invariant rational functions is denoted by   $\C(Y)^{\Gamma}:= \{ f \in \C(Y) \ | \ \sigma_Y^{\sharp}(f)=f \}$.  A classical result, due to Artin, states  that if $G$ is a finite group of automorphisms  of a field $\Bbbk$, then $G=\Gal(\Bbbk/\Bbbk^G)$.   So,  the extension  $\C(Y) / \C(Y)^{\Gamma}$ is Galois, with Galois group $\Gamma$.  
\end{remark}

\begin{proof}
By   Theorem \ref{thmAlt} (1),  ${X[Y, \D]}:=\Spec(A[Y, \D])$ is a normal affine  $\C$-variety   endowed with a $\T$-action, of   weight cone   $\omega_{N}^{\vee}$. This action is obtained from the following comorphism:
\begin{equation*}
\mu^{\sharp} : A[Y, \D]  \to \C[M ] \otimes  A[Y, \D], \  
f \mathfrak{X}_m   \mapsto   \chi^{m} \otimes f \mathfrak{X}_m.
\end{equation*} 
We  now construct an $\R$-structure on ${X[Y, \D]}$ such that $(\T, \tau)$ acts on $({X[Y, \D]}, \sigma_{X[Y, \D]})$. Condition  (\ref{eqdiv}) implies that,  for all $m \in \omega_{N}^{\vee} \cap M$,  
\begin{equation*}
\alpha_m : \H^0( Y, \mathcal{O}_Y(\D({m}))) \mathfrak{X}_{ m}   \to  \H^0( Y, \mathcal{O}_Y(\D(\tilde{\tau}({m})))) \mathfrak{X}_{\tilde{\tau}({m})}, \ f \mathfrak{X}_m   \mapsto \sigma_Y^{\sharp}(f) h(\tilde{\tau}({m}))\mathfrak{X}_{\tilde{\tau}({m})}
\end{equation*}
are   isomorphisms of $A[Y, \D]_0$-modules and   
these isomorphisms collect into an involution $\oplus_{m \in \omega_{N}^{\vee} \cap M}  \alpha_m$ on the direct sum $A[Y, \D]$. The latter corresponds to a  $\R$-structure $\sigma_{X[Y, \D]}$ on ${X[Y, \D]}$. \\
Finally, $(\T, \tau)$ acts on $({X[Y, \D]}, \sigma_{X[Y, \D]})$ since the following diagram commutes:
\begin{center}
\begin{tikzpicture}
  \matrix (m) [matrix of math nodes,row sep=1.5em,column sep=7em,minimum width=2em]
  {
	  A[Y, \D]  &  \C[M] \otimes  A[Y, \D]        \\
	  A[Y, \D]  &  \C[M] \otimes  A[Y, \D]        \\};
 \path[-stealth]
   	(m-1-1) edge   node [above] { $\mu^{\sharp}$}  (m-1-2)
		(m-2-1) edge   node [below] { $\mu^{\sharp}$}  (m-2-2)
		(m-1-1) edge node [left] { $  {{\sigma^{\sharp}}_{X[Y, \D]} }$ }   (m-2-1)
		(m-1-2) edge node [right] { $  {\tau }^{\sharp} \otimes {{\sigma_{X[Y, \D]}} }^{\sharp} $ }   (m-2-2)
 ;
\end{tikzpicture}
\end{center}
\end{proof}

\begin{theorem} \label{ResultT2}   
Let $(\T, \tau)$ be a real torus and let $M:= \Hom_{gr}(\T, \GC)$. Let $(X, \sigma_X)$ be a normal   affine $\R$-variety endowed with a   $(\T  , \tau)$-action.     Let $\omega_{N}$  be the cone in $N_{\Q}$ dual to the weight cone $ \omega_{M}$.    There exists a normal semi-projective $\R$-variety   $(Y, \sigma_Y)$,    an $\omega_{N}$-pp-divisor  $\D$   on $Y$, and  a  {monoid}  {morphism} $h :   \omega_{M}  \cap M \to \C(Y)^*$ such that  
$$\forall  m \in \omega_{M} \cap M, \ \sigma_Y^*(\D(m)) = \D(\tilde{\tau}(m)) + \Div_Y(h(\tilde{\tau}(m))) \text{ \ \  and \ \ } h(m) \sigma_Y^{\sharp}(h(\tilde{\tau}(m)))=1,$$
 and such that the affine varieties $(X, \sigma_X)$ and $({X[Y, \D]}, \sigma_{X[Y, \D]})$ are $(\T, \tau)$-equivariantly isomorphic.  
\end{theorem}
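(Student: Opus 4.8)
The plan is to carry out the Altmann--Hausen recipe of \cite[\S 11]{Alt}, recalled in \S\ref{SubsectionETD}, in a $\Gamma$-equivariant fashion, and then to manufacture the monoid morphism $h$ by transporting the $\R$-structure $\sigma_X$ through the resulting isomorphism. First I would invoke Proposition \ref{GaloisToricDowngrading} to obtain an integer $n \geq \dim(\T)$, an $\R$-group structure $\tau'$ on $\GC^n$ extending to an $\R$-structure $\sigma'$ on $\AC^n$, and a $(\T,\tau)$-equivariant closed immersion $(X,\sigma_X)\hookrightarrow(\AC^n,\sigma')$ that is simultaneously $\Gamma$-equivariant, with $X$ meeting the dense $\GC^n$-orbit and with the same weight cone as $\AC^n$. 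By Remark \ref{FracsectionR} the inclusion $(\T,\tau)\hookrightarrow(\GC^n,\tau')$ determines the quotient torus $(\T_Y,\tau_Y)$ and the two exact sequences of $\Z[\Gamma]$-modules linking $N,N',N_Y$ and $M_Y,M',M$.

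Next I would downgrade the natural $\GC^n$-action on $\AC^n$ to the $\T$-action as in \S\ref{SubsectionETD}. This produces a semi-projective toric variety $\bar Y$ for the quotient torus $\T_Y$ and a pp-divisor describing the downgraded action; since the cone $\Q^n_{\geq 0}\subset N'_{\Q}$ is stable under the coordinate permutation $\hat{\tau}'$, the induced $\R$-group structure $\tau_Y$ endows $\bar Y$ with an $\R$-structure. Restricting along the $\Gamma$-stable closed subvariety $X\hookrightarrow\AC^n$ then yields a normal semi-projective $\R$-variety $(Y,\sigma_Y)$ and an $\omega_{N}$-pp-divisor $\D$ on $Y$. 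By Theorem \ref{thmAlt}(ii) there is an isomorphism of $M$-graded $\C$-algebras $\phi:\C[X]\xrightarrow{\sim}A[Y,\D]$, and the whole construction being carried out over $\Gamma$ guarantees that $\phi$ identifies $\C(X)_0$ with $\C(Y)$ in a way that intertwines $\sigma_X^{\sharp}$ with $\sigma_Y^{\sharp}$.

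To produce $h$ and verify the two identities, I would transport $\sigma_X^{\sharp}$ via $\phi$ to a $\C$-antilinear multiplicative involution $\rho$ of $A[Y,\D]$, extended to the $M$-graded overring $\C(Y)[M]\supset A[Y,\D]$ in which each $\mathfrak{X}_m$ is invertible. By Lemma \ref{IsomGradedStructure}, $\tilde{\tau}(\omega_{M})=\omega_{M}$ and $\rho$ maps $\C(Y)\mathfrak{X}_m$ onto $\C(Y)\mathfrak{X}_{\tilde{\tau}(m)}$, so for each $m\in\omega_{M}\cap M$ there is a unique $h(\tilde{\tau}(m))\in\C(Y)^*$ with $\rho(\mathfrak{X}_m)=h(\tilde{\tau}(m))\,\mathfrak{X}_{\tilde{\tau}(m)}$; since $\mathfrak{X}_m\mathfrak{X}_{m'}=\mathfrak{X}_{m+m'}$, $\rho$ is multiplicative, and $\tilde{\tau}$ is additive, $h$ is a monoid morphism on $\omega_{M}\cap M$. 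Using $\rho|_{\C(Y)}=\sigma_Y^{\sharp}$ one finds $\rho(f\mathfrak{X}_m)=\sigma_Y^{\sharp}(f)\,h(\tilde{\tau}(m))\,\mathfrak{X}_{\tilde{\tau}(m)}$; requiring $\rho$ to carry $\H^0(Y,\mathcal{O}_Y(\D(m)))\mathfrak{X}_m$ isomorphically onto $\H^0(Y,\mathcal{O}_Y(\D(\tilde{\tau}(m))))\mathfrak{X}_{\tilde{\tau}(m)}$ forces, via $\Div_Y(\sigma_Y^{\sharp}(f))=\sigma_Y^*(\Div_Y(f))$, the identity $\sigma_Y^*(\D(m))=\D(\tilde{\tau}(m))+\Div_Y(h(\tilde{\tau}(m)))$, while $\rho^2=\mathrm{id}$ together with $\tilde{\tau}^2=\mathrm{id}$ gives $h(m)\,\sigma_Y^{\sharp}(h(\tilde{\tau}(m)))=1$. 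Comparing $\rho$ with the involution $\bigoplus_m\alpha_m$ of Theorem \ref{ResultT} shows they coincide, so $\phi$ is the desired $\Gamma$-equivariant isomorphism.

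The main obstacle I expect is the equivariant downgrading itself: ensuring that the semi-projective variety $Y$ of \cite[\S 11]{Alt} genuinely carries an $\R$-structure $\sigma_Y$ compatible with $\sigma_X$, and that the identification $\C(X)_0\cong\C(Y)$ is $\Gamma$-equivariant rather than merely an isomorphism of $\C$-algebras. The difficulty is precisely the phenomenon recorded in Remark \ref{FracsectionR}, namely that the section $\T_Y\to\GC^n$ need not be $\Gamma$-equivariant, so $\GC^n$ does not split $\Gamma$-equivariantly as $\T\times\T_Y$ (contrast Lemma \ref{SectionGalois}, where it does for Weil-restriction factors). This lack of an equivariant splitting is exactly what prevents taking $h=1$ in general and what the monoid morphism $h$ is designed to record; once the equivariant downgrading is secured, the verification of the two displayed identities is the essentially formal computation sketched above.
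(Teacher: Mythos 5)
Your proposal is correct and follows essentially the same route as the paper: invoke Proposition \ref{GaloisToricDowngrading}, perform the Altmann--Hausen toric downgrading $\Gamma$-equivariantly (the fan of $Y$ being $\Gamma$-stable because $P$ is $\Gamma$-equivariant), establish that $\C(X)_0\cong\C(Y)$ is $\Gamma$-equivariant, and then transport $\sigma_X^{\sharp}$ to an antilinear involution of $A[Y,\D]$ whose graded components produce $h$ and force the two displayed identities --- exactly the paper's Steps 0--4. Your definition of $h$ as the coefficient in $\rho(\mathfrak{X}_m)=h(\tilde{\tau}(m))\mathfrak{X}_{\tilde{\tau}(m)}$ coincides with the paper's explicit formula $h'(m)=\sigma^{\sharp}\bigl(u(\tilde{\tau}(m))\bigr)/u(m)$, since the paper's isomorphism $\Phi$ sends $\mathfrak{X}_m$ to $u(m)=\chi^{s^*(m)}$.
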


\begin{proof} $\bullet$ \textit{\textbf{Step 0}:   Preliminaries.}

Using Proposition  \ref{GaloisToricDowngrading}, there exists $n \in \N$ such that $(\T, \tau)$ is a closed subgroup of $(\GC^n, \tau')$ and $(X, \sigma_X)$ is a closed $(\T, \tau)$-equivariant subvariety of $(\AC^n, \sigma)$. So, let $\mathfrak{a}$ be the ideal of $\C[\AC^n] = \C[x_1, \dots, x_n]$ such that $\C[X]$ is $(\Gamma \times \T)$-equivariantly isomorphic to $\C[\AC^n]/\mathfrak{a}$. We write $\C[X] =  \C[\AC^n]/\mathfrak{a}$. Let $M':=\Hom_{gr}(\GC^n,\GC)$ and let $M_Y$ be the sublattice of $M'$ constructed in Remark \ref{FracsectionR}.  We have the   commutative diagrams of Remark \ref{FracsectionR}. 
Recall that there always exists a section $s^* : M \to M'$, but not necessarily $\Gamma$-equivariant, and a cosection $t^* : M' \to M_Y$. These homomorphisms satisfy $F^* \circ s^* = Id_{M}$, $t^* \circ P^* = Id_{M_Y}$ and $P^* \circ t^* = Id_{M'} - s^* \circ F^*$.

Since $\Frac(\C[M']) = \C(\AC^n)$, the section $s^*$ induces a  morphism
$$u : \omega_{M} \cap M \to   \C(\AC^n)^*, m \mapsto \chi^{s^*(m)}$$
such that for all ${m} \in \omega_{M} \cap M$, $u({m}) \in \C(\AC^n)_{m}$.   

 Note that, for all $  m \in \omega_M  \cap M $, $\C(\AC^n)_m =  \C(\AC^n)_0 u(m)$.
Indeed, since $\C[\AC^n]_m \subset \C(\AC^n)_m$, we can write  $\C[\AC^n]_m = \widetilde{\C[\AC^n]}_m u(m)$, 
with $\widetilde{\C[\AC^n]}_m \subset \C(\AC^n)_0$.
Then , we can write:
$$\C[\AC^n] = \bigoplus_{{m} \in \omega_{M} \cap M} \widetilde{\C[\AC^n]}_{m} u({m}) \subset \C(\AC^n)_0[M].$$ 
Since {for} {all} $i$, $x_i \notin \mathfrak{a}$, for all ${m} \in \omega_{M} \cap M$ {we} {have}   $u_X({m}) \in \C(X)_{m}$, where $u_X(m)$ is obtained from the surjective morphism $\C[\AC^n] \twoheadrightarrow \C[X]$. It follows a morphism $u_X : \omega_{M} \cap M \to \C(X)^*$. Hence we can write
$$\C[X] = \bigoplus_{{m} \in \omega_{M} \cap M } \C[X]_{{m}} = \bigoplus_{{m} \in  \omega_{M} \cap M } \widetilde{\C[X]}_{m}u_X({m}) \subset \C(X)_0[M],$$
where, for all $m \in \omega_M \cap M$, $\C[X]_m = \widetilde{\C[X]}_m u_X(m)$ and $\widetilde{\C[X]}_m \subset \C(X)_0$.

\smallskip

$\bullet$ \textit{\textbf{Step 1}: Altmann-Hausen quotient and divisors. } 
 
Let $\{e_1, \dots, e_n \}$ be the standard basis of $N'$. The cone   in $  N'_{\Q}$ of the toric variety $\AC^n$ is $\Q_{\geq 0}^n$, and    $F^*(\Q^n_{\geq 0}) = \omega_{M}$.  
 Since the fan in $N'_{\Q}$ generated by $\{e_1, \dots, e_n \}$  is $\Gamma$-stable (for $\hat{\tau}'$) and $P$  is $\Gamma$-equivariant,  the fan  $\Lambda_Y$ in $(N_Y)_{\Q}$ generated by $\{P(e_1), \dots, P(e_n) \}$  is $\Gamma$-stable  (for $\hat{\tau}_Y$).

Let $Y$ be the toric variety obtained from the fan $\Lambda_Y$; it is a  semi-projective variety  (see \cite[Proposition 7.2.9]{Cox}). 
Since  $\Lambda_Y$  is $\Gamma$-stable, the $\R$-group structure $\tau_Y$ on $\T_Y := \Spec(\C[M_Y])$  extends to an $\R$-structure $\sigma_Y$ on $Y$ by   \cite[Proposition 1.19]{Huruguen}.

Let $Y_X$ be the closure of the image of $X \cap \GC^n$ in $Y$ by the  surjective group homomorphism $\pi : \GC^n \twoheadrightarrow \T_Y$ composed with the inclusion $\T_Y \hookrightarrow Y$. Since these morphisms are $\Gamma$-equivariant, the $\R$-structure on $Y$  restricts to an $\R$-structure $\sigma_{Y_X}$ on  $Y_X$.    The normalization $\tilde{Y}_X$ of $Y_X$, with morphism $\nu : \tilde{Y}_X \to Y_X$,   is a  semi-projective variety.  Using universal property of normalization and the fact that $\nu$ is an isomorphism on a dense  open subset of $Y_X$, there exists an $\R$-structure $\sigma_{  \tilde{Y}_X }$ on $\tilde{Y}_X$ which   makes the following diagram commute: 
\vspace{-0.25cm}
	\begin{center}
	\begin{tikzpicture}
		\matrix (m) [matrix of math nodes,row sep=1.25em,column sep=5em,minimum width=2em]
  {
	   \tilde{Y}_X &   \tilde{Y}_X        \\
	     {Y}_X &    {Y}_X     \\};
 \path[-stealth]
   	(m-1-1) edge   node [above] {$\sigma_{  \tilde{Y}_X } $ }  (m-1-2)
		(m-2-1) edge   node [above] { $\sigma_{   {Y}_X } $}  (m-2-2)
		(m-1-1) edge   node [left] {$\nu$}   (m-2-1)
		(m-1-2) edge   node [right] {$\nu$}  (m-2-2);
	\end{tikzpicture}
\end{center}
\vspace{-0.25cm}
For each ray   of the fan $\Lambda_Y$,  we denote by $v_i$ its first lattice vector.  
 To a ray spanned by $v_i $   corresponds  a toric divisor $D_{v_i}$ on $Y$. %Since $\Lambda_Y$ is $\Gamma$-stable, the image of a toric divisor by $\sigma_Y$ is a toric divisor. 

 The divisor $\D = \sum_{v_i} \Delta_{v_i} \otimes D_{v_i}$, where $\Delta_{v_i} := s(\Q^n_{\geq 0} \cap P^{-1}(v_i))$, is an $\omega_{N}$-pp-divisor on $Y$. Let ${\D}_X$ be the divisor obtained by pulling back   $\D$ to $\tilde{Y}_X$. It  is an $\omega_{N}$-pp-divisor on $\tilde{Y}_X$ (see \cite[Proposition 8.1]{Alt03}  which is not in the published version).

\smallskip

$\bullet$  \textit{\textbf{Step 2}: Isomorphisms  $\C(Y)     \cong \C(\AC^n)_0   \text{ \ and \ } \C(Y_X)  \cong  \C(X)_0$.} 
 
Observe that $\pi^{\sharp} : \C[M_Y] \to \C[M']_0$ is an isomorphism.   Hence,   $\pi^{\sharp}$ induces an isomorphism:  
$$    \Frac(\C[M_Y])   \to \Frac(\C[M']_0 ),   \  
\dfrac{f}{g} = \dfrac{\sum a_i \chi^{m_i} }{ \sum b_j \chi^{m_j} }   \mapsto \dfrac{\pi^{\sharp}(f)}{\pi^{\sharp}(g)} = \dfrac{\sum a_i \chi^{P^*(m_i)} }{ \sum b_j \chi^{P^*(m_j)} }.$$
For all $m \in M$, note that  $(F^*)^{-1}(m) = s^*(m) + \text{Ker}(F^*)$.  Hence,  $\Frac(\C[M'])_0 = \Frac(\C[M']_0)$. Since $\T_Y$ is a dense   open  subset of $Y$, we have  $\C(Y)=\C(\T_Y)$. Since $\GC^n$ is  a   dense  open subset of $\AC^n$, we have $\Frac(\C[M'])= \C(\AC^n)$.  Therefore, $\Frac(\C[M'])_0 = \C(\AC^n)_0$. Finally, we obtain  a $\Gamma$-equivariant isomorphism: 
\begin{equation*}
\varphi :  \C(Y)    \to \C(\AC^n)_0. 
\end{equation*} 
Moreover, the inclusion $X \hookrightarrow \AC^n$ is $\T$-equivariant, the variety  $\T_Y \cap Y_X$ is affine and the following diagram   commutes:
\vspace{-0.25cm}
\begin{center}
\begin{tikzpicture}
  \matrix (m) [matrix of math nodes,row sep=1.25em,column sep=5em,minimum width=2em]
  {
	   \GC^n  &   \T_Y^{\vphantom{n}}          \\
	     \GC^n \cap X &    \T_Y \cap Y_X = \pi( \GC^n \cap X)  \\};
 \path[-stealth]
   	(m-1-1) edge  [->>] node [above] { $\pi $}   (m-1-2)
		(m-2-1) edge  [->>]  (m-2-2)
		(m-2-1) edge  [right hook-latex]    (m-1-1)
		(m-2-2) edge  [right hook-latex]   (m-1-2);
	\end{tikzpicture}
\end{center}
\vspace{-0.25cm}
Therefore the following diagram commutes:
\vspace{-0.25cm}
\begin{center}
\begin{tikzpicture}
\matrix (m) [matrix of math nodes,row sep=1.25em,column sep=6em,minimum width=2em]
 {
    \C[{\T_Y\cap Y_X}]^{\vphantom{n}}  &      \C[{\GC^n\cap X} ]_0     \\
			\C[{\T_Y   }]     &   \C[\GC^n ]_0    \\};
\path[-stealth]
  (m-1-1) edge [right hook->>]      (m-1-2)
	(m-2-1) edge [right hook->>] node [above] {$\pi^{\sharp}$}  (m-2-2)
	(m-2-1) edge  [->>]   (m-1-1)
	(m-2-2) edge  [->>]   (m-1-2);
\end{tikzpicture}
\end{center}
\vspace{-0.25cm}
It follows an isomorphism $\C(\T_Y \cap Y_X) \to   \Frac(\C[\GC^n \cap X]_0)$. Since,  
$$\Frac(\C[\GC^n \cap X]_0) = \C(\GC^n \cap X)_0 , \   \C(Y_X) = \C(\T_Y \cap Y_X), \text{  and } \C(X) = \C(\GC^n \cap X),$$  
we obtain a $\Gamma$-equivariant isomorphism $   \C(Y_X)    \cong \C(X)_0 $. 
Note that we have a $\Gamma$-equivariant isomorphism $\C(Y_X) \cong \C(\tilde{Y}_X)$, therefore, the isomorphism:  
 $$ {\varphi}_X :  \C(\tilde{Y}_X)    \to \C(X)_0$$
is $\Gamma$-equivariant.

\smallskip

$\bullet$  \textit{\textbf{Step 3}: Isomorphisms  $A[ {Y} ,  {\D} ] \cong \C[\AC^n]$ and $A[\tilde{Y}_X, {\D}_X] \cong \C[X]$.}

  Let $m \in  \omega_{M} \cap M$. Consider  the polyhedron $\Delta(m)     := (F^*)^{-1}(m) \cap \Q^n_{\geq 0} \subset M'_{\Q}$  and the polyhedron  $\Delta_Y(m)  := t^*(\Delta(m) ) \subset (M_Y)_{\Q}$.  
Note that  
$$\widetilde{\C[\AC^n]}_{m} := \bigoplus_{m'  \in  \Q^n_{\geq 0} \cap(F^*)^{-1}(m)   \cap M'} \C \chi^{m'} = \bigoplus_{m_Y \in  \Delta_Y(m) \cap M_Y} \C \chi^{P^*(m_Y) } = \varphi \left(  \bigoplus_{m_Y \in  \Delta_Y(m) \cap M_Y} \C \chi^{ m_Y } \right).$$
It follows from  Lemma   \cite[\S 3.4]{Fult} and   the proof of \cite[Proposition 8.5]{Alt03}  that:
\begin{equation*}
\H^0(Y, \mathcal{O}_Y(\D(m))) = \bigoplus_{m_Y \in  \Delta_Y( m  ) \cap M_Y} \C \chi^{ m_Y }.
\end{equation*}
Therefore, we obtain   a {graded} {algebra} {isomorphism}: 
\begin{align*}
{\Phi} : A[ {Y} ,  {\D} ]   \to \C[\AC^n],  \ \ 
f  \mathfrak{X}_m   \mapsto{\varphi}(f ) u({m}).
\end{align*}
Moreover, there is a natural surjective graded algebra morphism:
$$\Psi : A[ {Y} ,  {\D} ] \twoheadrightarrow A[\tilde{Y}_X, {\D}_X].$$
Let ${\Phi}_X : A[\tilde{Y}_X, {\D}_X]   \to \C(X)_0[M] $ be  the morphism defined by $
f  \mathfrak{X}_{m}   \mapsto  {\varphi}_X(f ) u_X({m})$. 
Since the following diagram commutes:
\vspace{-0.25cm}
\begin{center}
\begin{tikzpicture}
  \matrix (m) [matrix of math nodes,row sep=1.5em,column sep=6em,minimum width=2em]
  {
        A[Y,  \D]       & \C[\AC^n]    \\
	A[\tilde{Y}_X, \D_X]  &   \C[X]      \\};
 \path[-stealth]
   	(m-1-1) edge [right hook->>] node [above] {$\Phi$}  (m-1-2)
		(m-2-1) edge [->>] node [above] {$\Phi_X$}          (m-2-2)
		(m-1-1) edge [->>] node [left]  {$\Psi$}            (m-2-1)
		(m-1-2) edge [->>]        (m-2-2);
	\end{tikzpicture}
\end{center}
\vspace{-0.25cm}
 we obtain  a {graded} {algebra} isomorphism: 
\begin{align*}
{\Phi}_X : A[\tilde{Y}_X, {\D}_X]    \to \C[X],  \ \ 
f  \mathfrak{X}_{m}    \mapsto  {\varphi}_X(f ) u_X({m}).
\end{align*}

\smallskip

$\bullet$  \textit{\textbf{Step 4}: Equality   $\sigma_{\tilde{Y}_X}^* \left({\D}_X({m}) \right) = {\D}_X(\tilde{\tau}({m})) + \Div(h(\tilde{\tau}({m})))$,       for all ${m} \in \omega_{M}  \cap M $. } 
 
Let ${m} \in \omega_{M}  \cap M$. Let $h': \omega_M \cap M \to  \C(\AC^n)_0$ be the monoid morphism defined by $h'(m) := \dfrac{\sigma^{\sharp}(u(\tilde{\tau}(m))}{u(m)}$.  By Lemma  \ref{IsomGradedStructure}, $\sigma^{\sharp}(\C[\AC^n]_{\tilde{\tau}({m}) }) = \C[\AC^n]_{m}$. It follows $h'(m) \in   \widetilde{\C[\AC^n]}_{m}$. Moreover, note that $h'(m) \sigma^{\sharp} (h'(\tilde{\tau}({m})))=1$. Consider the monoid morphism $h := \tilde{\varphi}^{-1} \circ h' : \omega_{M}  \cap M \to \C(Y)^*$.   We construct  a  $\R$-structure  on $X[Y, \D]$ using the following commutative diagram: 
\vspace{-0.5cm}
\begin{multicols}{2}
\small{ \begin{flushleft}
\ \\
\begin{tikzpicture}
  \matrix (m) [matrix of math nodes,row sep=1.75em,column sep=2em,minimum width=1.25em]
  {
	   \C[\AC^n] &   \C[\AC^n]          \\
	     A[Y, \D] &   A[Y, \D]    \\};
 \path[-stealth]
   	(m-1-1) edge   node [above] { $\sigma^{\sharp}$}   (m-1-2)
		(m-2-1) edge     (m-2-2)
		(m-2-1) edge    node [ left] { $ {\Phi} $ }   (m-1-1)
		(m-1-2) edge    node [right] { $ {\Phi}^{-1} $ }  (m-2-2);
	\end{tikzpicture}
\end{flushleft}

\begin{flushleft}
\begin{tikzpicture}
 \matrix (m) [matrix of math nodes,row sep=1.25em,column sep=1em,minimum width=1.25em]
 {
\varphi(f)u(m) & {\sigma}^{\sharp} \Big(\varphi(f)\Big)h'(\tilde{\tau}(m)) u(\tilde{\tau}(m))   \\
 f \mathfrak{X}_{m} & \varphi^{-1} \Big(\sigma^{\sharp} \Big(\varphi(f)\Big) h'(\tilde{\tau}(m)) \Big) \mathfrak{X}_{\tilde{\tau}(m)}  \\ };
\path[-stealth]
  (m-1-1) edge [|->] node [above] { \vphantom{$\sigma^{\sharp}$} } (m-1-2)
	(m-2-1) edge [|->] (m-2-2)
	(m-2-1) edge [|->] (m-1-1)
	(m-1-2) edge [|->] (m-2-2);
\end{tikzpicture}
\end{flushleft}}
\end{multicols}
\vspace{-0.25cm}
Since $ {\varphi}$ is $\Gamma$-equivariant, we have $ \varphi^{-1} \Big(\sigma^{\sharp} \Big(\varphi(f)\Big) h'(\tilde{\tau}(m)) \Big)    =       \sigma_{Y}^{\sharp}(f)   h(\tilde{\tau}(m))    $.  Hence,  the morphism 
$$A[Y, \D]  \to   A[Y, \D],  \ 
f \mathfrak{X}_{m}  \mapsto {\sigma_{Y}}^{\sharp}(f) h(\tilde{\tau}({m}))\mathfrak{X}_{\tilde{\tau}({m})}$$
induces an   $\R$-structure $\sigma_{X[Y,\D]}$ on  $X[Y,\D]$. 
From this we deduce  that: 
\begin{equation} \label{eq1} \let\veqno\eqno
\sigma_{Y}^*(\D({m})) = \D(\tilde{\tau}({m})) + \Div_Y(h(\tilde{\tau}({m}))). 
\end{equation}
 Moreover, $ h(m) \sigma_{Y}^{\sharp} (h(\tilde{\tau}({m})))=1$.
By the same reasoning, we construct an   $\R$-structure $\sigma_{X[\tilde{Y}_X,\D_X]}$ on  $X[\tilde{Y}_X,\D_X]$ from the following morphism:
$$A[\tilde{Y}_X,\D_X]  \to   A[\tilde{Y}_X,\D_X],  \ 
f \mathfrak{X}_{m}  \mapsto {\sigma_{\tilde{Y}_X}}^{\sharp}(f) h_X(\tilde{\tau}({m}))\mathfrak{X}_{\tilde{\tau}({m})},$$
where $h_X$ is obtained from $h$ by the projection $\C[\AC^n] \twoheadrightarrow \C[X]$.
Thus we obtain:
$$\sigma_{\tilde{Y}_X}^* \left(\D_X({m}) \right) = \D_X(\tilde{\tau}({m})) + \Div_{\tilde{Y}_X}(h_X(\tilde{\tau}({m}))) $$ and $h_X(m) \sigma_{Y}^{\sharp} (h_X(\tilde{\tau}({m})))=1$. 
\end{proof}

\begin{remark} \label{equivcocyvle} The construction of the real   variety $(X[Y, \D], \sigma_{X[Y, \D]})$ does not depend on the choice of the cosection. Indeed, for $j \in \{ 1, 2 \}$, let $s_j : N' \to N$ be two cosections, let $\D_j := \sum_i     \Delta_{v_i}^j \otimes D_{v_i} $ be the two associated pp-divisors, and let $h_j : \omega_M \cap M \to \C(Y)^*$ be the monoid morphisms constructed in Step 4. Note that for all $m \in M$, $(s_1^*-s_2^*)(m) \in \text{Ker}(F^*)=\text{Im}(P^*)$, thus there exists a lattice homomorphism $s_0 : N_Y \to N$ such that $s_1-s_2= s_0 \circ P$. Let $g : M \to \C(Y)^*$ be the morphism defined by $g(m):= \chi^{s_0^*(m)}$. Let $m \in \omega_M \cap M$. Since $\Delta_{v_i}^1 =  s_0   ( v_i ) + \Delta_{v_i}^2$, we have :
\begin{align*}
\D_1(m) =    \sum_i \langle s_0^*(m) |  v_i   \rangle \otimes  D_{v_i}    +   \D_2(m)  = \text{div}_Y(g(m)) + \D_2(m).
\end{align*}
Therefore the $M$-graded algebras $A[Y, \D_1]$ and $A[Y, \D_2]$ are isomorphic via:
$$A[Y, \D_1] \to A[Y, \D_2], f \mathfrak{X}_m \mapsto  f g(m) \mathfrak{X}_m.$$
Moreover, since
$$\dfrac{\sigma_Y^{\sharp}(g(\tilde{\tau}(m)))}{g(m)} =  \dfrac{\sigma_Y^{\sharp} \left(\chi^{s_0^*(\tilde{\tau}(m))} \right)}{\chi^{s_0^*( m )}} = \tilde{\varphi}_X^{-1} \left( \dfrac{\sigma_Y^{\sharp} \left(\chi^{P^* \circ s_0^*(\tilde{\tau}(m))} \right)}{\chi^{P^* \circ  s_0^*( m )}} \right) = \tilde{\varphi}_X^{-1} \left( \dfrac{\sigma_Y^{\sharp} \left( u_1 (\tilde{\tau}(m)) \right) u_2(m) }{ u_1(m) \sigma_Y^{\sharp} \left( u_2 (\tilde{\tau}(m)) \right)  } \right)  = \dfrac{h_1(m)}{h_2(m)},$$ 
the following diagram commutes:
\vspace{-0.25cm}
\begin{multicols}{2}
\small{ \begin{flushleft}
\begin{tikzpicture}
  \matrix (m) [matrix of math nodes,row sep=1.75em,column sep=4em,minimum width=1.25em]
  {
	   A[Y, \D_1] &   A[Y, \D_1]    \\
	   A[Y, \D_2] &   A[Y, \D_2]    \\};
 \path[-stealth]
   	(m-1-1) edge   node [above] { $\sigma_{X[Y,\D_1]}^{\sharp}$}   (m-1-2)
		(m-2-1) edge   node [above] { $\sigma_{X[Y,\D_2]}^{\sharp}$}  (m-2-2)
		(m-1-1) edge   node [ left] { $ \cong $ }   (m-2-1)
		(m-1-2) edge   node [right] { $ \cong $ }  (m-2-2);
	\end{tikzpicture}
\end{flushleft}

\begin{flushleft}
\begin{tikzpicture}
 \matrix (m) [matrix of math nodes,row sep=1.25em,column sep=1em,minimum width=1.25em]
 {
 f \mathfrak{X}_m & {\sigma}_Y^{\sharp}(f)  h_1(\tilde{\tau}(m)) \mathfrak{X}_{\tilde{\tau}(m)}   \\
 f g(m) \mathfrak{X}_{m} & {\sigma}_Y^{\sharp}(f g(m))  h_2(\tilde{\tau}(m)) \mathfrak{X}_{\tilde{\tau}(m)}   \\ };
\path[-stealth]
  (m-1-1) edge [|->] node [above] { \vphantom{$\sigma^{\sharp}$} } (m-1-2)
	(m-2-1) edge [|->] (m-2-2)
	(m-1-1) edge [|->] (m-2-1)
	(m-1-2) edge [|->] (m-2-2);
\end{tikzpicture}
\end{flushleft}}
\end{multicols}
\vspace{-0.25cm}
Hence, the varieties $({X[Y, \D_1]}, \sigma_{X[Y, \D_1]})$ and $({X[Y, \D_2]}, \sigma_{X[Y, \D_2]})$ are $(\T, \tau)$-equivariantly isomorphic.
\end{remark}

\subsection{Galois cohomology and real torus actions} \label{SubsectionH}

We recall some cohomological results    in view of  simplifying  the Altmann-Hausen presentation in the case where the real  acting torus is quasi-split.

Let $G$ be an abstract group equipped with a $\Gamma$-action denoted by $\star$.  A  \textit{cocycle} $a : \Gamma \to  {G}$     is  a map such that  $a_{id}=1$   and $a_{\gamma}   (\gamma \star a_{\gamma}) = 1$.    Two cocycles $a$ and $b$ are \textit{equivalent} if there exists $g \in G$ such that $b_{\gamma} = g^{-1}  a_{\gamma} ( \gamma \star g)$. The set of cocycles modulo this equivalence relation is the \textit{first pointed} set of Galois cohomology   $\H^1(\Gamma,  G)$.  If $G$ is an abelian $\Gamma$-group, then $\H^1(\Gamma,  G)$ is a group (see \cite{Serre}).

In the following result (Corollary \ref{ResultH} \textit{(ii)}),  we see that the Altmann-Hausen presentation simplifies if a certain cohomology set is trivial.  This simplification consists of choosing a pp-divisor on $Y$ such that $h=1$.

\begin{corollary}  \label{ResultH} Fix a real torus $(\T, \tau)$. Let $M:=\Hom_{gr}(\T, \GC)$. 
\begin{enumerate}[leftmargin=0.55cm, label=(\roman*)]
\item    Let $ (Y, \sigma_Y) $ be a normal semi-projective variety. 
 Let $\omega_{N}$ be a pointed {cone} in $N_{\Q}$ and $\D$ be an $\omega_{N}$-pp divisor on $Y$.    
If 
$$\forall m \in \omega_{N}^{\vee} \cap M, \  \sigma_Y^*(\D(m)) = \D(\tilde{\tau}(m)),$$
then there exists an $\R$-structure $\sigma_{X[Y, \D]}$ on the affine variety $X[Y, \D]$ such that  $(\T, \tau)$ acts  on $(X[Y, \D], \sigma_{X[Y, \D]})$.  
\item  Let $(X, \sigma_X)$ be an affine variety endowed with an action of $(\T  , \tau)$ of weight cone $\omega_{M} \subset M_{\Q}$, and let $\omega_{N}$ be the cone in ${N}$ dual to $ \omega_{M}$.    Let $(Y, \sigma_Y)$ be the $\R$-variety of Theorem \ref{ResultT2} and let $G:= \Hom_{gr}(M, \C(Y)^*)$ endowed with the $\Gamma$-action  $\gamma \star f := \sigma_Y^{\sharp} \circ f \circ \tilde{\tau}$.
If $\H^1( \Gamma, G) = \{ 1 \}$, then there exists  an $\omega_{N}$-pp divisor  $\D$  on $Y$ 
 such that  
$$\forall  m \in \omega_{M} \cap M, \ \sigma_Y^*(\D(m)) = \D(\tilde{\tau}(m)),$$
 and such that the varieties $(X, \sigma_X)$ and $({X[Y, \D]}, \sigma_{X[Y, \D]})$ are $(\T, \tau)$-equivariantly isomorphic.
\end{enumerate}
\end{corollary}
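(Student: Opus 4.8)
The plan is to first invoke Theorem \ref{ResultT2} to obtain a presentation $(Y,\D,h)$ of $(X,\sigma_X)$, and then to \emph{kill} the twisting datum $h$ by replacing $\D$ with a linearly equivalent pp-divisor, the replacement being made possible exactly by the vanishing of $\H^1(\Gamma,G)$. So I would begin by applying Theorem \ref{ResultT2}: it produces a normal semi-projective $\R$-variety $(Y,\sigma_Y)$, an $\omega_{N}$-pp-divisor $\D$, and a monoid morphism $h:\omega_{M}\cap M\to\C(Y)^*$ with
\begin{equation*}
\sigma_Y^*(\D(m))=\D(\tilde{\tau}(m))+\Div_Y(h(\tilde{\tau}(m)))\quad\text{and}\quad h(m)\,\sigma_Y^{\sharp}(h(\tilde{\tau}(m)))=1
\end{equation*}
for all $m\in\omega_{M}\cap M$. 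Since the weight cone $\omega_{M}$ is full-dimensional, $\omega_{M}\cap M$ generates $M$ as a group, so $h$ extends uniquely to an element of $G$, and the key observation is that the second identity above is precisely the cocycle condition $h\cdot(\gamma\star h)=1$ for the $\Gamma$-action $\gamma\star f=\sigma_Y^{\sharp}\circ f\circ\tilde{\tau}$. Thus setting $a_{id}=1$, $a_{\gamma}=h$ defines a cocycle representing a class in $\H^1(\Gamma,G)$.

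Next I would exploit the hypothesis $\H^1(\Gamma,G)=\{1\}$, which forces this cocycle to be a coboundary: there exists $g\in G$ with $h=g^{-1}(\gamma\star g)$, that is
\begin{equation*}
h(m)=\frac{\sigma_Y^{\sharp}(g(\tilde{\tau}(m)))}{g(m)}\qquad\text{for all }m\in M.
\end{equation*}
I would then define $\D':=\D-\Div(g)$, the pp-divisor with $\D'(m)=\D(m)-\Div_Y(g(m))$, obtained by subtracting the principal polyhedral divisor attached to the homomorphism $g$. Using $\sigma_Y^*(\Div_Y(g(m)))=\Div_Y(\sigma_Y^{\sharp}(g(m)))$, that $\tilde{\tau}$ is an involution, and the coboundary relation, a direct computation gives $\sigma_Y^*(\D'(m))=\D'(\tilde{\tau}(m))$ for all $m$, so the twisting function attached to $\D'$ is trivial.

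It then remains to transport the presentation along the standard map $A[Y,\D]\to A[Y,\D']$, $f\mathfrak{X}_m\mapsto f\,g(m)\,\mathfrak{X}_m$, which is an isomorphism of $M$-graded algebras. By the Remark following Theorem \ref{ResultT2} (applied with $h_1=h$ and $h_2=1$), this same isomorphism is $\Gamma$-equivariant precisely because $\sigma_Y^{\sharp}(g(\tilde{\tau}(m)))/g(m)=h(m)$, which is exactly our coboundary identity. Composing it with the $\Gamma$-equivariant isomorphism $\C[X]\cong A[Y,\D]$ of Theorem \ref{ResultT2} yields a $\Gamma$-equivariant isomorphism $\C[X]\cong A[Y,\D']$, while part \textit{(i)} (i.e.\ Theorem \ref{ResultT} in the case $h=1$) guarantees that $(\GC^n,\tau)$ acts on $(X[Y,\D'],\sigma_{X[Y,\D']})$, finishing the argument.

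The conceptual heart of the proof is the identification of the compatibility condition $h(m)\,\sigma_Y^{\sharp}(h(\tilde{\tau}(m)))=1$ with the Galois cocycle condition, which is what lets $\H^1(\Gamma,G)$ intervene. The step I expect to be the main technical obstacle is the construction of $\D'$: one must verify that subtracting the principal polyhedral divisor $\Div(g)$ — where $g$ is now a \emph{general} element of $G$, rather than the character $\chi^{s_0^*(\cdot)}$ appearing in the Remark — still yields a genuine $\omega_{N}$-pp-divisor (Cartierness, semi-ampleness and bigness being invariant under linear equivalence), and that the conjugations and signs are tracked carefully enough that the residual twisting function is exactly $1$ rather than a nonzero $\C^*$-valued constant (such a constant, having trivial divisor, can be absorbed by rescaling $g$).
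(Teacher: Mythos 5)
Your proposal is correct and follows essentially the same route as the paper's own proof: both interpret the compatibility condition $h(m)\,\sigma_Y^{\sharp}(h(\tilde{\tau}(m)))=1$ from Theorem \ref{ResultT2} as the cocycle condition, use $\H^1(\Gamma,G)=\{1\}$ to write $h(m)=g^{-1}(m)\,\sigma_Y^{\sharp}(g(\tilde{\tau}(m)))$ for some $g\in G$, and replace $\D$ by the pp-divisor $\D'(m):=\D(m)-\Div_Y(g(m))$, which then satisfies $\sigma_Y^*(\D'(m))=\D'(\tilde{\tau}(m))$. Your treatment is in fact slightly more careful than the paper's on two points it leaves implicit — the extension of $h$ from $\omega_M\cap M$ to all of $M$, and the $\Gamma$-equivariance of the graded isomorphism $A[Y,\D]\cong A[Y,\D']$ via the Remark following Theorem \ref{ResultT2}.
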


\begin{proof}
\textit{(i)}  We specify the proof of Theorem \ref{ResultT} with $h=1 \in \C(Y)^*$. \\ 
\textit{(ii)} 
By Theorem \ref{ResultT2}  there exists  a normal semi-projective variety   $Y$,    an $\omega_{N}$-pp-divisor  $\D$   on $Y$,  an $\R$-structure $\sigma_Y$ on $Y$ and a  {monoid}  {morphism} $h :   \omega_{M}  \cap M \to \C(Y)^*$ such that  $\sigma_Y^*(\D(m)) = \D(\tilde{\tau}(m) ) + \Div_Y(h(\tilde{\tau}(m) )) $    and    $h(m) \sigma_Y^{\sharp}(h(\tilde{\tau}(m))=1$ for all $m \in \omega_{M} \cap M$, and such that  the varieties $(X, \sigma_X)$ and $({X[Y, \D']}, \sigma_{X[Y, \D']})$ are $(\T, \tau)$-equivariantly isomorphic.
 Consider   $a : \Gamma \to    G$ defined by   $a_{id} : m \mapsto 1 \in \C(Y)^*$ and $a_{\gamma} : m \mapsto  h(m)  $. By construction, $a$   is a  cocycle.  Since $\H^1(\Gamma, G ) = \{ 1 \}$,  the  cocycle $b : \Gamma \to     G$ defined by $b_{id} = b_{\gamma} : m \mapsto 1 \in \C(Y)^*$  is equivalent to $a$, so there exists $g \in G$ such that $h(m) = g^{-1}(m) \sigma_Y^{\sharp}(g(\tilde{\tau}(m)))$. Let   $m \in \omega_{M} \cap M$, then:
$$\sigma_Y^*(\D(m)) = \D( \tilde{\tau}(m) ) + \Div_Y(h(\tilde{\tau}(m)) ) = \D( \tilde{\tau}(m) ) + \sigma_Y^*\Div_Y(g(m))  -  \Div_Y(g(\tilde{\tau}(m))) $$
$$\iff \sigma_Y^*(\D(m) -  \Div_Y(g(m)))= \D( \tilde{\tau}(m) )   -  \Div_Y(g(\tilde{\tau}(m))).$$
So, if $\D'$ is the  pp-divisor defined by $ \D'(m):= \D(m)-\text{div}_{Y}(g(m))$, then $\sigma_Y^*(\D'(m))  = \D'( \tilde{\tau}(m) )$ and the $M$-graded  algebras $A[Y, \D]$ and $A[Y, \D']$ are  isomorphic with respect to $\sigma_{X[Y,\D]}^{\sharp}$ and $\sigma_{X[Y,\D']}^{\sharp}$ (see the diagram of Remark \ref{equivcocyvle} with $\D_1 = \D$, $h_1=h$, $\D_2=\D'$ and $h_2=1$). Hence the varieties $(X, \sigma_X)$ and $({X[Y, \D']}, \sigma_{X[Y, \D']})$ are $(\T, \tau)$-equivariantly isomorphic.
\end{proof}

\begin{remark} 
The converse of Corollary \ref{ResultH} \textit{(ii)} is false; see Example    \ref{exc}.
\end{remark}

We    provide some situations with trivial Galois cohomology set making it possible to apply Corollary \ref{ResultH}  to simplify the Altmann-Hausen presentation. Let   $ \L  /  \K $ be a quadratic extension and denote by $\{ \varphi_{id}, \varphi_{\gamma} \}$ the Galois group $\Gal(\L/\K)$.

\begin{lemma} \label{H1}
Let $M  $ be a rank $n$ lattice     and  let $G_n:= \Hom_{gr}(M, \L^*)$. If $\Gal(\L/\K)$ acts on $G_n$ by  $\gamma \star f := \varphi_{\gamma} \circ f$,  
then $\H^1( \Gal(\L/\K), G_n) = \{ 1 \}$.
\end{lemma}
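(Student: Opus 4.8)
The plan is to reduce the statement to Hilbert's Theorem 90 by identifying $G_n$ with a product of copies of $\L^*$ as a $\Gal(\L/\K)$-module. First I would fix a $\Z$-basis $\{e_1, \dots, e_n\}$ of the rank $n$ lattice $M$. Evaluation on this basis gives a group isomorphism
\[
G_n = \Hom_{gr}(M, \L^*) \longrightarrow (\L^*)^n, \qquad f \mapsto (f(e_1), \dots, f(e_n)),
\]
with inverse sending a tuple $(a_1, \dots, a_n)$ to the unique homomorphism $M \to \L^*$ determined by $e_i \mapsto a_i$. Under the action $\gamma \star f = \varphi_\gamma \circ f$ one computes $(\gamma \star f)(e_i) = \varphi_\gamma(f(e_i))$, so this isomorphism intertwines the given $\Gal(\L/\K)$-action on $G_n$ with the componentwise action on $(\L^*)^n$, each factor carrying the natural Galois action on $\L^*$. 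Hence $G_n \cong (\L^*)^n$ as $\Gal(\L/\K)$-modules.

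Next I would invoke that first Galois cohomology commutes with finite direct products: for $\Gal(\L/\K)$-modules $A$ and $B$ a cocycle valued in $A \times B$ is exactly a pair consisting of a cocycle into $A$ and a cocycle into $B$, and the coboundary relation splits componentwise, so $\H^1(\Gal(\L/\K), A \times B) \cong \H^1(\Gal(\L/\K), A) \times \H^1(\Gal(\L/\K), B)$. Iterating over the $n$ factors gives
\[
\H^1(\Gal(\L/\K), G_n) \cong \H^1(\Gal(\L/\K), \L^*)^n.
\]

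Finally, Hilbert's Theorem 90 asserts $\H^1(\Gal(\L/\K), \L^*) = \{1\}$ for the quadratic (hence cyclic, of order two) extension $\L/\K$. Substituting into the previous display yields $\H^1(\Gal(\L/\K), G_n) = \{1\}$, which is the assertion. I do not expect a genuine obstacle here: the only points needing (routine) verification are that the evaluation isomorphism is honestly $\Gal(\L/\K)$-equivariant and that the product decomposition of $\H^1$ is compatible with the pointed-set and group structures introduced in \S\ref{SubsectionH}; once these are in place the conclusion is immediate from Hilbert 90.
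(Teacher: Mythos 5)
Your proof is correct, and it takes a genuinely different (if closely related) route from the paper's. You identify $G_n \cong (\L^*)^n$ as $\Gal(\L/\K)$-modules via evaluation on a $\Z$-basis of $M$ and then use the fact that $\H^1$ of a finite product of $\Gamma$-groups is the product of the $\H^1$'s, applying Hilbert's Theorem 90 factor by factor. The paper instead argues by induction on the rank: it places $G_{n+1}$ in a short exact sequence $1 \to G_n \to G_{n+1} \to G_1 \to 1$ of $\Gal(\L/\K)$-groups and invokes the associated exact sequence in Galois cohomology, with Hilbert 90 furnishing the base case and the triviality of $\H^1(\Gal(\L/\K), G_1)$. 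Both arguments rest on the same underlying point: since the action is pure post-composition $\gamma \star f = \varphi_\gamma \circ f$, with no precomposition by a lattice involution, the Galois action does not mix the coordinates of $M$, so any splitting of $M$ is automatically $\Gamma$-equivariant. Your version is more elementary (no long exact sequence needed) and makes that point transparent; in particular it clarifies why the same shortcut fails for Lemma \ref{H1WR}, where the action permutes the two factors of $M_0 \oplus M_0$, so $G$ is an induced module rather than a product of copies of $\L^*$ with the standard action, and an explicit cocycle computation is required. The paper's exact-sequence formulation, on the other hand, is precisely the template it reuses in Lemma \ref{H1GRWR}, where the sub and quotient pieces carry different $\Gamma$-actions and a direct product-of-$\H^1$'s argument would need the equivariant splitting spelled out anyway.
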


\begin{proof} 
We prove this result by induction on $n$. Let $n=1$, then $G_1 \cong \L^*$. The Galois group $\Gal(\L/\K)$ acts on $G_1 \cong \L^*$ and this action comes from the $\Gamma$-action on $\L$ defined by $\gamma \cdot z = \varphi_{\gamma}(z)$. By Hilbert's theorem 90, $\H^1(\Gal(\L/\K), G_1) = \{ 1 \}$. Let $n \geq 1$ and assume that the Lemma \ref{H1} is true for this fixed $n$.       We have a $\Gal(\L/\K)$-equivariant short exact sequence of $\Gal(\L/\K)$-groups: 
\begin{center}
\begin{tikzpicture}
\matrix (m) [matrix of math nodes,row sep=0.7em,column sep=5em,minimum width=2em]  {
	1_{\vphantom{n}}   & G_n   & G_{n+1} & G_1 & 1_{\vphantom{n}}    \\};
 \path[-stealth]
  	(m-1-1) edge    (m-1-2)
		(m-1-2) edge  node [above] { $\Psi$ }  (m-1-3)
		(m-1-3) edge  node [above] { $\Psi'$ }   (m-1-4)
		(m-1-4) edge    (m-1-5);
	\end{tikzpicture}
\end{center}
with 
$\Psi(f) :  \Z^n \oplus \Z \to \L^*, (k_1, \dots, k_n, k) \mapsto f(k_1, \dots, k_n)$ and $\Psi'(g) :    \Z \to \L^*, k \mapsto g(0, \dots, 0, k)$, where $f \in G_n$ and $g \in G_{n+1}$. This induces an exact sequence in Galois cohomology: 
 
\hspace{-1.2cm} 
\small 
\begin{tikzpicture}
\matrix (m) [matrix of math nodes,row sep=0.7em,column sep=0.75em,minimum width=2em]  {
1 & G_n^{\Gal(\L/\K)}   & G_{n+1}^{\Gal(\L/\K)} & G_1^{\Gal(\L/\K)} &  \H^1( \Gal(\L/\K), G_n) & \H^1( \Gal(\L/\K), G_{n+1}) & \H^1( \Gal(\L/\K), G_1)   \\};
 \path[-stealth]
  	(m-1-1) edge    (m-1-2)
		(m-1-2) edge    (m-1-3)
		(m-1-3) edge    (m-1-4)
		(m-1-4) edge    (m-1-5)
		(m-1-5) edge    (m-1-6)
		(m-1-6) edge    (m-1-7);
	\end{tikzpicture} 
\normalsize
By induction, $\H^1( \Gal(\L/\K), G_{n})  = \{ 1 \} $ and $\H^1( \Gal(\L/\K), G_{1})  = \{ 1 \} $. Therefore, 
$$\H^1( \Gal(\L/\K), G_{n+1})  = \{ 1 \}.$$
\end{proof}

\begin{lemma} \label{H1WR}
Let $M:=M_0 \oplus M_0$, where $M_0 \cong \Z^n$,  and let $\rho := \Gamma \to \GL(M)$ be the representation which exchanges the two factors.  Let $G= \Hom_{gr}(M, \L^* )$ be endowed with the $\Gamma$-action defined by $\gamma \star f := \varphi_{\gamma} \circ f \circ \rho(\gamma^{-1})$,
Then $\H^1( \Gamma, {G}) = \{ 1 \}$.	
\end{lemma}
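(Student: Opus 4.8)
The plan is to exploit the fact that, because $\rho$ exchanges the two copies of $M_0$, the Galois twist and the factor-swap combine in such a way that every cocycle is forced into a single coboundary; I will carry this out by an explicit cocycle computation in coordinates, which is shorter than invoking Shapiro's lemma directly (though the vanishing ultimately reflects that $G$ is coinduced from the trivial subgroup).

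First I would set up coordinates on $G$. Writing an element $f \in G = \Hom_{gr}(M_0 \oplus M_0, \L^*)$ as a pair $(f_1, f_2)$, where $f_i$ is the restriction of $f$ to the $i$-th copy of $M_0$, the group law on $G$ is componentwise multiplication. Unwinding the definition $\gamma \star f = \varphi_\gamma \circ f \circ \rho(\gamma^{-1})$, using $\gamma^{-1}=\gamma$ and that $\rho(\gamma)$ swaps the two factors, gives
\[
\gamma \star (f_1, f_2) = (\varphi_\gamma \circ f_2,\ \varphi_\gamma \circ f_1).
\]

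Next I would describe all cocycles. A cocycle $a : \Gamma \to G$ is determined by $a_\gamma =: (p,q)$ subject to $a_{id}=(1,1)$ and $a_\gamma\,(\gamma \star a_\gamma) = (1,1)$. Substituting the formula above, the cocycle condition becomes the pair of equations $p\cdot\varphi_\gamma(q)=1$ and $q\cdot\varphi_\gamma(p)=1$; since $\varphi_\gamma^2=\mathrm{id}$, the second equation is a formal consequence of the first. Hence the cocycles are exactly the pairs $a_\gamma = (\varphi_\gamma(q)^{-1},\, q)$ with $q \in \Hom_{gr}(M_0,\L^*)$ arbitrary.

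Finally I would trivialize each such cocycle. Following the equivalence convention $b_\gamma = g^{-1} a_\gamma (\gamma\star g)$, showing $a$ is equivalent to the trivial cocycle amounts to finding $g$ with $a_\gamma = g\,(\gamma\star g)^{-1}$. Taking $g := (1, q)$ and computing $g\,(\gamma\star g)^{-1} = \big(g_1(\varphi_\gamma g_2)^{-1},\, g_2(\varphi_\gamma g_1)^{-1}\big)$ with $g_1=1$, $g_2=q$ yields exactly $(\varphi_\gamma(q)^{-1}, q) = a_\gamma$, so $a$ is a coboundary and therefore $\H^1(\Gamma, G) = \{1\}$. The only genuine subtlety is keeping track of where $\varphi_\gamma$ lands after the swap; once the action is expressed in the coordinates $(f_1,f_2)$ the trivialization is essentially forced, which is the computational shadow of $G$ being coinduced from the trivial group (so one could alternatively finish by Shapiro's lemma, reducing to the trivial statement $\H^1(\{1\}, \Hom_{gr}(M_0,\L^*)) = \{1\}$).
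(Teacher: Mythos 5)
Your proposal is correct and takes essentially the same route as the paper: the paper also argues by an explicit cocycle-to-coboundary computation, where its consequence of the cocycle relation, $\varphi_{\gamma}(h(-m',0))=h(0,m')$, is exactly your equation $p=(\varphi_{\gamma}\circ q)^{-1}$ written without coordinates, and its trivializing homomorphism $f(m,m')=h(-m,0)$ corresponds (up to inverses and a $\Gamma$-invariant factor) to your $g=(1,q)$. Your closing observation that the vanishing reflects $G$ being coinduced (Shapiro's lemma) is a pleasant conceptual gloss that the paper does not make, but it is not needed for the argument.
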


\begin{proof} 
A cocycle is  uniquely determined by a homomorphism $h=a_{\gamma} : M \to \L^*$ which satisfies $h(m, m') \varphi_{\gamma}(h(m', m)) =1$ (the constant homomorphism) for every $(m, m') \in M=M_0 \oplus M_0$. In particular, we have $\varphi_{\gamma}(h(-m',0))h(0,-m')=1$, hence $\varphi_{\gamma}(h(-m',0))=h(0,m')$.  Now let $f: M \to \L^*$ be the homomorphism defined by $f(m,m')=h(-m,0)$. Then, we have:
$$f^{-1}(m,m') (\gamma \star f )(m,m')  = h(m,0) \varphi_{\gamma}(h(-m', 0))  =    h(m,0)  h(0, m')   = h(m, m').$$ 
Hence $a$ is equivalent to the cocycle $\Gamma \to G, \gamma \mapsto 1$, and so $\H^1( \Gamma, G) = \{ 1 \}$.
\end{proof}

\begin{lemma} \label{H1GRWR}
Let $M:=M_1 \oplus M_0 \oplus M_0$, where $M_1 \cong \Z^p$ and $M_0 \cong \Z^q$,  and let $\rho := \rho_1 \times \rho_0 : \Gamma \to \GL(M)$ be the representation such that $\rho_1$ is the identity on $M_1$ and $\rho_0$   exchanges the two factors on $M_0 \oplus M_0$.  Let $G= \Hom_{gr}(M, \L^* )$ be endowed with the $\Gamma$-action defined by $\gamma \star f := \varphi_{\gamma} \circ f \circ \rho(\gamma^{-1})$.
Then $\H^1( \Gamma, {G}) = \{ 1 \}$.	
\end{lemma}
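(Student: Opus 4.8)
The plan is to reduce Lemma \ref{H1GRWR} to the two preceding lemmas by splitting $G$ into a product of $\Gamma$-stable factors, one governed by Lemma \ref{H1} and the other by Lemma \ref{H1WR}. First I would use that a homomorphism out of a direct sum is the same datum as a tuple of homomorphisms out of the summands, so that restriction to $M_1$ and to $M_0 \oplus M_0$ gives an isomorphism of abelian groups
$$G = \Hom_{gr}(M, \L^*) \cong \Hom_{gr}(M_1, \L^*) \times \Hom_{gr}(M_0 \oplus M_0, \L^*) =: G_1 \times G_2.$$
The point to verify is that this decomposition is $\Gamma$-equivariant. Since $\rho = \rho_1 \times \rho_0$ preserves the summands $M_1$ and $M_0 \oplus M_0$ separately, the action $\gamma \star f = \varphi_\gamma \circ f \circ \rho(\gamma^{-1})$ respects the product; the induced action on $G_1$ is $\gamma \star f = \varphi_\gamma \circ f$ (because $\rho_1 = \mathrm{id}$), while on $G_2$ it is $\gamma \star f = \varphi_\gamma \circ f \circ \rho_0(\gamma^{-1})$. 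These are precisely the $\Gamma$-actions appearing in Lemma \ref{H1} (with lattice $M_1 \cong \Z^p$) and in Lemma \ref{H1WR} (with $M_0 \cong \Z^q$).

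Next, since $\L^*$ is abelian, $G$ and its two factors are abelian $\Gamma$-groups, so first Galois cohomology converts the product into a product: $\H^1(\Gamma, G_1 \times G_2) \cong \H^1(\Gamma, G_1) \times \H^1(\Gamma, G_2)$. Applying Lemma \ref{H1} to the first factor and Lemma \ref{H1WR} to the second yields $\H^1(\Gamma, G_1) = \{1\}$ and $\H^1(\Gamma, G_2) = \{1\}$, whence $\H^1(\Gamma, G) = \{1\}$.

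I do not expect a serious obstacle here: the whole content sits in the two previous lemmas, and the only things to check are the $\Gamma$-equivariance of the summand decomposition and the compatibility of $\H^1$ with finite products of abelian $\Gamma$-modules. If one prefers to avoid the product formula, an equivalent and equally short route is to argue directly with cocycles: a cocycle is determined by $h = a_\gamma \in G$, its restrictions $h|_{M_1}$ and $h|_{M_0 \oplus M_0}$ are cocycles for the two sub-actions, each is a coboundary by the respective lemma, and recombining the two trivializing elements of $G_1$ and $G_2$ into a single element of $G$ trivializes $a$.
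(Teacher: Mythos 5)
Your proof is correct and follows essentially the same route as the paper: both split $M$ into $M_1 \oplus (M_0 \oplus M_0)$ and reduce the claim to Lemma \ref{H1} (for the $G_1$-factor) and Lemma \ref{H1WR} (for the other factor). The only difference is packaging: the paper feeds the short exact sequence $1 \to G_1 \to G \to G_0 \to 1$ into the exact sequence of Galois cohomology, whereas you observe that this sequence splits $\Gamma$-equivariantly and invoke the compatibility of $\H^1$ with finite products, which is an equally valid (and marginally more direct) way to perform the same reduction.
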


\begin{proof} 
Denote   $G_1:= \Hom_{gr}(M_1, \L^*)$ and    $G_0:= \Hom_{gr}(M_0 \oplus M_0, \L^*)$.    
We have  a $\Gamma$-equivariant short exact sequence of groups:  
\begin{center}
\begin{tikzpicture}
\matrix (m) [matrix of math nodes,row sep=0.7em,column sep=4em,minimum width=2em]  {
	1_{\vphantom{n}}   & G_1   & G_{\vphantom{n}} & G_{0} & 1_{\vphantom{n}}    \\};
 \path[-stealth]
  	(m-1-1) edge   (m-1-2)
		(m-1-2) edge   (m-1-3)
		(m-1-3) edge   (m-1-4)
		(m-1-4) edge   (m-1-5);
	\end{tikzpicture}
\end{center}
We obtain an exact sequence  in Galois cohomology: 
\begin{center}
\begin{tikzpicture}
\matrix (m) [matrix of math nodes,row sep=0.7em,column sep=2em,minimum width=2em]  {
1 & G_1^{\Gamma}   & G_{\vphantom{n}}^{\Gamma} & G_{0}^{\Gamma} &  \H^1( \Gamma, G_1) & \H^1( \Gamma, G) & \H^1( \Gamma, G_{0})   \\};
 \path[-stealth]
  	(m-1-1) edge    (m-1-2)
		(m-1-2) edge    (m-1-3)
		(m-1-3) edge    (m-1-4)
		(m-1-4) edge    (m-1-5)
		(m-1-5) edge    (m-1-6)
		(m-1-6) edge    (m-1-7);
	\end{tikzpicture}
\end{center}
By Lemma \ref{H1} and Lemma \ref{H1WR}, we have $\H^1( \Gamma, G_{1})  = \{ 1 \}$ and  $\H^1( \Gamma, G_{q})  = \{ 1 \}$. Hence 
$$\H^1( \Gamma, G)  = \{ 1 \}.$$ 
\end{proof}

A consequence of the Lemma \ref{H1GRWR} is the following proposition:

\begin{proposition}  \label{ResultQS} Fix a real torus $(\GC^{n}, \tau)$ where $\tau = \tau_0^p \times \tau_2^q$ and $n = p + 2q$. Let $M:=\Hom_{gr}(\GC^n, \GC)$. 
\begin{enumerate}[leftmargin=0.55cm, label=(\roman*)]
\item    Let $ (Y, \sigma_Y) $ be a normal semi-projective variety. 
 Let $\omega_{N}$ be a pointed {cone} in $N_{\Q}$ and $\D$ be an $\omega_{N}$-pp divisor on $Y$.    
If 
\begin{equation} \label{eqdivquasisplit} 
\let\veqno\eqno
\forall m \in \omega_{N}^{\vee} \cap M, \  \sigma_Y^*(\D(m)) = \D(\tilde{\tau}(m)),
\end{equation}
then there exists an $\R$-structure $\sigma_{X[Y, \D]}$ on the affine variety $X[Y, \D]$ such that  $(\GC^n, \tau)$ acts  on $(X[Y, \D], \sigma_{X[Y, \D]})$.  
\item  Let $(X, \sigma_X)$ be an affine variety endowed with an action of $(\GC^{n}  , \tau)$ of weight cone $\omega_{M} \subset M_{\Q}$, and let $\omega_{N}$ be the cone in ${N}$ dual to $ \omega_{M}$.    There exists a semi-projective variety $(Y, \sigma_Y)$ and an $\omega_{N}$-pp divisor  $\D$  on $Y$ 
 such that  
$$\forall  m \in \omega_{M} \cap M, \ \sigma_Y^*(\D(m)) = \D(\tilde{\tau}(m)),$$
and such that the varieties $(X, \sigma_X)$ and $({X[Y, \D']}, \sigma_{X[Y, \D']})$ are $(\GC^{n}, \tau)$-equivariantly isomorphic.
\end{enumerate}
\end{proposition}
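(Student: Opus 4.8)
The plan is to deduce both parts from the general results already in place, with the cohomological vanishing of Lemma \ref{H1GRWR} as the single substantive input: it is exactly what permits us to take $h=1$ in the quasi-split case.

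For part (i), I would simply invoke Theorem \ref{ResultT} with the constant monoid morphism $h\equiv 1\in\C(Y)^*$. The second condition in (\ref{eqdiv}), namely $h(m)\,\sigma_Y^{\sharp}(h(\tilde{\tau}(m)))=1$, holds automatically, and the first condition collapses to $\sigma_Y^*(\D(m))=\D(\tilde{\tau}(m))$, which is precisely the hypothesis (\ref{eqdivquasisplit}). Theorem \ref{ResultT} then delivers the $\R$-structure $\sigma_{X[Y,\D]}$ together with the $(\GC^n,\tau)$-action, so this part is immediate (it is the content of Corollary \ref{ResultH}(i)).

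For part (ii) the strategy is to produce the data $(Y,\sigma_Y,\D,h)$ from Theorem \ref{ResultT2} and then absorb $h$ into $\D$ by a coboundary. First I would apply Theorem \ref{ResultT2} to obtain a normal semi-projective $\R$-variety $(Y,\sigma_Y)$, an $\omega_N$-pp-divisor $\D$, and a monoid morphism $h:\omega_M\cap M\to\C(Y)^*$ with $\sigma_Y^*(\D(m))=\D(\tilde{\tau}(m))+\Div_Y(h(\tilde{\tau}(m)))$ and $h(m)\sigma_Y^{\sharp}(h(\tilde{\tau}(m)))=1$. The decisive step is to check that $G:=\Hom_{gr}(M,\C(Y)^*)$, endowed with the $\Gamma$-action $\gamma\star f:=\sigma_Y^{\sharp}\circ f\circ\tilde{\tau}$, satisfies $\H^1(\Gamma,G)=\{1\}$. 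This is where the quasi-split hypothesis is used: since $\tau=\tau_0^{\times p}\times\tau_2^{\times q}$, the induced involution $\tilde{\tau}=\tilde{\tau}_0^{\times p}\times\tilde{\tau}_2^{\times q}$ splits $M$ as $M_1\oplus(M_0\oplus M_0)$, where $M_1\cong\Z^p$ carries the trivial action (each $\tilde{\tau}_0=\mathrm{id}$) and $M_0\oplus M_0\cong\Z^{2q}$ carries the coordinate-swap action (each $\tilde{\tau}_2:(k,l)\mapsto(l,k)$). By Remark \ref{FunctionField} the extension $\C(Y)/\C(Y)^{\Gamma}$ is Galois with group $\Gamma$, so taking $\L=\C(Y)$ places us exactly in the setting of Lemma \ref{H1GRWR} (with $\varphi_\gamma=\sigma_Y^{\sharp}$ and $\rho(\gamma)=\tilde{\tau}$, noting $\gamma^{-1}=\gamma$ in $\Gamma$), which yields $\H^1(\Gamma,G)=\{1\}$.

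Finally, with this vanishing in hand I would run the argument of Corollary \ref{ResultH}(ii) essentially verbatim: the pair $(a_{id},a_\gamma)=(1,h)$ is a cocycle in $\H^1(\Gamma,G)$, hence a coboundary, so there is $g\in G$ with $h(m)=g^{-1}(m)\,\sigma_Y^{\sharp}(g(\tilde{\tau}(m)))$. Replacing $\D$ by the pp-divisor $\D'$ given by $\D'(m):=\D(m)-\Div_Y(g(m))$ then gives $\sigma_Y^*(\D'(m))=\D'(\tilde{\tau}(m))$ for all $m\in\omega_M\cap M$, while $A[Y,\D']$ stays $\Gamma$-equivariantly isomorphic to $\C[X]$ (the $g$-twist is the isomorphism of the Remark following Theorem \ref{ResultT2}). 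The only genuine work is matching the $\Gamma$-module structure of $M$ to Lemma \ref{H1GRWR}; the rest is a direct citation of the preceding theorem and corollary.
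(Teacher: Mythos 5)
Your proposal is correct and follows exactly the paper's intended route: the paper presents Proposition \ref{ResultQS} as ``a consequence of Lemma \ref{H1GRWR},'' meaning precisely the combination you spell out — part (i) is Corollary \ref{ResultH}(i) (Theorem \ref{ResultT} with $h\equiv 1$), and part (ii) is Corollary \ref{ResultH}(ii) whose cohomological hypothesis is verified by applying Lemma \ref{H1GRWR} with $\L=\C(Y)$, $\K=\C(Y)^{\Gamma}$ (via Remark \ref{FunctionField}) and the quasi-split splitting of $M$ into a trivial factor and swapped factors. Your write-up in fact makes explicit the identification of the $\Gamma$-module structure on $M$ that the paper leaves implicit, so nothing is missing.
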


For a  complexity one quasi-split $(\T, \tau)$-action  on an affine $\R$-variety $(X, \sigma)$ (i.e~$\text{Dim}(\T)=\text{Dim}(X)-1$), we recover the real version of Langlois' result in \cite{Lang} about quasi-split torus actions on varieties of complexity one  over an arbitrary field.

\subsection{Some one-to-one correspondence} \label{SubsectionFH}

The two main results (Theorem \ref{ResultT} and \ref{ResultT2}) establish correspondences between real affine  varieties endowed with a real torus action and triples $(Y, \D, h)$. In this section,  we focus on this correspondence.  In general, there is no one-to-one correspondence between $(\T, \tau)$-varieties and triple $(Y, \D, h)$. However, Altmann and Hausen define the notion of minimal pp-divisor in \cite[Section 8]{Alt} that leads us to the following result:

\begin{theorem} \label{ThmFunct} Let $(\T, \tau)$ be a real torus and let $M:= \Hom_{gr}(\T, \GC)$. Let $\omega_N \subset N_{\Q}$ (resp. $\omega_N'\subset N_{\Q}$) be  a pointed cone, let $(Y, \sigma_Y)$ (resp. $(Y', \sigma_Y')$) be a normal semi projective variety,    $\D \in \PPDiv(Y, \omega_N)$ (resp.  $\D' \in \PPDiv(Y', \omega_N')$) be a minimal pp-divisor   and   $h : \omega_N^{\vee} \cap M \to \C(Y)$  be a monoid morphism such that:
$$\forall  m \in \omega_{N}^{\vee} \cap M, \ \sigma_Y^*(\D(m)) = \D(\tilde{\tau}(m)) + \Div_Y(h(\tilde{\tau}(m))) \text{ and } h(m) \sigma_Y^{\sharp}(h(\tilde{\tau}(m)))=1,$$
(resp. $h': {\omega_N}'^{\vee} \cap M \to \C(Y')$). The affine $\R$-varieties $(X[Y, \D], \sigma_{X[Y, \D]})$ and $(X[Y', \D'], \sigma_{X[Y', \D']})$ are $(\T, \tau)$-isomorphic if and only if the following holds:
\begin{enumerate}[leftmargin=0.75cm, label=(\roman*)]
\item there exists an isomorphism $\psi : (Y, \sigma_Y) \to (Y', \sigma_Y')$;
\item there exists a lattice automorphism $L : N \to N $  such that $L \circ \hat{\tau} = \hat{\tau} \circ L$;
\item there exists a monoid morphism $g : M \to \C(Y)$;
\item for all $m \in \omega_M \cap M$, $\psi^*(\D'(m))= \D(L^*(m)) + \Div_Y(g(m))$;
\item for all $m \in \omega_M \cap M$, $\dfrac{{\sigma_Y}^{\sharp}(g(\tilde{\tau}(m)))}{g(m)}=\dfrac{\psi^{\sharp}(h(m))}{h'(L^*(m))}$ (i.e the cocycles defined by $h' \circ L^*$ and $\psi^{\sharp} \circ h$ are equivalent).
\end{enumerate}
\end{theorem}

\begin{proof}
By \cite[Theorem 8.8]{Alt}, the affine $\C$-varieties $X[Y, \D]$ and $X[Y', \D']$ are $\T$-isomorphic if and only if the following holds:
\begin{enumerate}[leftmargin=0.75cm, label=(\roman*)]
\item there exists an isomorphism $\psi : Y' \to Y$;
\item there exists a lattice automorphism $L : N \to N $;
\item there exists a monoid morphism $g : M \to \C(Y')$;
\item for all $m \in \omega_M \cap M$, $\psi^*(\D'(m))= \D(L^*(m)) + \Div_Y(g(m))$; 
\end{enumerate}
The    morphisms $\psi$ and $L$ induces a $\T$-equivariant isomorphism of graded algebras:
$$ \Psi : A[Y, \D] \to A[Y', D'], f \mathfrak{X}_{m}  \mapsto \psi^{\sharp}(f) g(m)\mathfrak{X}_{L^*(m)}.$$ 
Therefore the  diagram
\begin{center}
\begin{tikzpicture}
  \matrix (m) [matrix of math nodes,row sep=1em,column sep=4em,minimum width=1.25em]
  {
	   A[Y, \D] &   A[Y', \D']  \\
	   A[Y, \D] &   A[Y', \D']  \\};
 \path[-stealth]
   	(m-1-1) edge   node [above] { $\Psi$}   (m-1-2)
		(m-1-1) edge   node [ left] { $ \sigma_Y^{\sharp} $ }  (m-2-1)
		(m-1-2) edge    node [ left] { ${\sigma_Y^{\sharp}}' $ }   (m-2-2)
		(m-2-1) edge    node [above] { $ \Psi $ }  (m-2-2);
	\end{tikzpicture}
\end{center}
commutes if and only for all $m \in \omega_M \cap M$, $\dfrac{{\sigma_Y}^{\sharp}(g(\tilde{\tau}(m)))}{g(m)}=\dfrac{\psi^{\sharp}(h(m))}{h'(L^*(m))}$. 
\end{proof}

\section{Examples} \label{SectionEx}

\subsection{Split real torus actions on  normal affine $\R$-varieties} \label{SubsectionGR}  

We describe actions of the real split torus $\GR^n$ on   affine $\R$-varieties. By definition, a $\GR^n$-action on an $\R$-variety $(X, \sigma_X)$ is an action of the real torus $(\GC^n, \tau_0^{\times n})$ on $(X, \sigma_X)$.  Fix a real torus $\GR^n = (\GC^n, \tau_0^{\times n})$ and let $M:=\Hom_{gr}(\GC^n, \GC)$.  The condition (\ref{eqdivquasisplit}) of Proposition  \ref{ResultQS} becomes:
$$\forall  m \in \omega_{M} \cap M, \ {\sigma_Y}^*(\D(m)) = \D(m).$$

\begin{example} \label{exGMR} We pursue Example \ref{exGMRa}.  In the case of a $\GR$-action, the sequence obtained from the inclusion $(X, \sigma) \hookrightarrow (\AC^n, \sigma')$ of Proposition \ref{GaloisToricDowngrading}  does not always have a $\Gamma$-equivariant section.  Indeed, consider the affine variety $(\AC^2, \sigma)$, where $\sigma(x,y) = (\overline{y}, \overline{x})$. Note that the torus $(\GC, \tau_0)$ acts on $(\AC^2, \sigma)$ by $ t \cdot (x,y)   = (tx, ty)$. Then, we have the following split short exact sequence
\begin{center}
\begin{tikzpicture}
\matrix (m) [matrix of math nodes,row sep=0.7em,column sep=5em,minimum width=2em]  {
	1^{\vphantom{n}}_{\vphantom{n}}   & \GC   & \GC^2 & \GC  & 1    \\};
 \path[-stealth]
  	(m-1-1) edge    (m-1-2)
		(m-1-2) edge    (m-1-3)
		(m-1-3) edge    (m-1-4)
		(m-1-4) edge    (m-1-5);
	\end{tikzpicture}
\end{center}
with $\GC \to \GC^2, t \mapsto (t,t)$ and $\GC^2 \to \GC, (s,t) \mapsto s/t$. 
We obtain the diagrams of Remark \ref{FracsectionR} with $M'=\Z^2$, $M=\Z$, $M_Y=\Z$, and with the following lattice homomorphisms:
\begin{multicols}{5}
 \begin{center}
{\ \\
\vspace{-0.25cm} 
$F^* := \begin{bmatrix}
1 & 1   \\
\end{bmatrix}$;
\ \\}
\end{center} 

\begin{center}
{
$P^* := \begin{bmatrix}
 1   \\
-1   \\
\end{bmatrix}$};
\end{center}

\begin{center}
{\vspace{-0.22cm}
$\tilde{\tau}' := \begin{bmatrix}
0 & 1 \\
1 & 0 \\
\end{bmatrix}$};
\ \\
\end{center}

\begin{center}
{ \ \\
\vspace{-0.25cm}  
$\tilde{\tau}_Y := \begin{bmatrix}
-1   \\
\end{bmatrix}$; 
\ \\}
\end{center}

\begin{center}
{ \ \\
\vspace{-0.25cm} 
$\tilde{\tau} = \begin{bmatrix}
1   \\
\end{bmatrix}$.
\ \\}
\end{center}
\end{multicols}
We can  show that there  is no   $\Gamma$-equivariant section $s^*: M  \to \Z^2$. Indeed, note that if such a section exists, we obtain $\GR \times \mathbb{S}^1 \cong \RC(\GC) $, which is false (see Proposition \ref{RealGroupStructureTorus}). Let   
$$s^* := \begin{bmatrix}
0   \\
1   \\
\end{bmatrix}$$
 be a section. Then, an Altmann-Hausen  presentation   of the $\GR$-action on $(\AC^2, \sigma)$ is:
\begin{multicols}{2}
\begin{itemize}
\item $\mathbb{P}^1 = {\AC^1}_v \cup \{ \infty \} $;   
\item $\sigma_Y : v \mapsto \overline{v}^{\ -1} $;  
\end{itemize}

\begin{itemize}
\item $\D  :=  [ 1 , + \infty [  \otimes \{  \infty \}$;  
\item $h := w = x/y$.
\end{itemize}
\end{multicols}

Now we give a $(\Gamma \times \GC)$-equivariant inclusion $(X, \sigma) \hookrightarrow (\AC^n, \sigma')$ which admits a $\Gamma$-equivariant section. First, note that $\AC^2 \cong \Spec(\C[x,y,z]/(x+y-z)) \subset \AC^3$, where the closed embedding is given by:
\begin{equation*}
\AC^2 \to \AC^3, \ (x,y) \mapsto (x,y,x+y).
\end{equation*}
Consider the action of $\GC$ on $\AC^3$ defined by $t \cdot (x,y,z) = (tx, ty, tz)$. This action is obtained from the inclusion:
\begin{equation*}
\GC \to \GC^3, \ t \mapsto (t,t,t).
\end{equation*}
Consider the $\R$-group structure on $\GC^3$ defined by $\sigma'(t_1,t_2,t_3) = (\overline{t_2}, \overline{t_1}, \overline{t_3})$.  The closed immersion $\AC^2 \cong \Spec(\C[x,y,z]/(x+y-z)) \subset \AC^3$ is $(\Gamma \times \GC)$-equivariant. 
We obtain the diagrams of Remark \ref{FracsectionR} with $M'=\Z^3$, $M=Z$, $M_Y=\Z^2$, and with the following lattice homomorphisms:
\begin{multicols}{5}
\footnotesize{
 \begin{center}
{ \ \\ 
\vspace{-0.1cm}
$F^*:= \begin{bmatrix}
1 & 1 & 1 \\
\end{bmatrix}$;
\ \\ }
\end{center} 

\begin{center}
$P^* := \begin{bmatrix}
 1  &  0  \\
 0  &  1  \\
-1  & -1  \\
\end{bmatrix}$;
\end{center}

\begin{center}
\vspace{-0.22cm}
$\tilde{\tau}' := \begin{bmatrix}
0 & 1 & 0 \\
1 & 0 & 0 \\
0 & 0 & 1 \\
\end{bmatrix}$;
\end{center}

\begin{center}
{\ \\
\vspace{-0.25cm}
$\tilde{\tau}_Y := \begin{bmatrix}
0 & 1   \\
1 & 0   \\
\end{bmatrix}$;
\ \\}
\end{center}

\begin{center}
$s^* := \begin{bmatrix}
0 \\
0 \\
1 \\
\end{bmatrix}$.
\end{center}}
\end{multicols} 
The section $s^*$ is $\Gamma$-equivariant. An Altmann-Hausen  presentation of the $\GR$-action on $(\AC^3, \sigma')$  is given by: 
\begin{itemize}[label=$\bullet$]
\item ${Y } := \mathbb{P}^2  =  U_1 \cup U_2 \cup U_3$, where 
$$U_1= \Spec(\C[v_1,w_1]), \  U_2= \Spec(\C[v_2,w_2]),  \text{ and }  U_3=  \Spec(\C[v_3,w_3]),$$ with gluing morphism obtained from $(v_1=x/z,w_1=y/z)$,  $(v_2=y/x, w_2=z/x)$ and $(v_3=z/y, w_3=x/y)$;  
\item $\sigma_{Y}$ is the $\R$-structure exchanging $x$ and  $y$ and fixing $z$;   
\item $\D:=  [ 1 , + \infty [  \otimes D$,  where $D \big{|}_{U_1} = 0$, $D \big{|}_{U_2} = \{ w_2 = 0\}     $ and $D \big{|}_{U_3} = \{ v_3 = 0\}     $;   and  
\item $h  := 1$.
\end{itemize}
We deduce from this an Altmann-Hausen  presentation of the $\GR$-action on $(\Spec(\C[x,y,z]/(x+y-z)), \sigma')$:
\begin{itemize}
\item ${Y_X} :=  {U_1}_X \cup {U_2}_X \cup {U_3}_X \cong \mathbb{P}^1$, where
$${U_1}_X= \Spec(\C[v_1,w_1]/(v_1+w_1-1)),  \   {U_2}_X= \Spec(\C[v_2,w_2]) /(v_2 - w_2  +1 ),  $$ 
 $$\text{ and } {U_3}_X=  \Spec(\C[v_3,w_3] / (v_3  - w_3 -1  ));$$  
\item $\sigma_{Y_X} = \sigma_{Y} \big{|}_X$;   
\item $\D_X  :=  [1,+ \infty [ \otimes D$,  where $D \big{|}_{U_1} = 0$, $D \big{|}_{U_2} = \{ (1,0) \}     $ and $D \big{|}_{U_3} = \{ (0,1) \}     $; and  
\item $h_X :=1$.
\end{itemize}
\end{example}

\subsection{Weil restriction   actions on  normal affine $\R$-varieties} \label{SubsectionWR}

By definition, a $\RC(\GC)$-action on a real algebraic variety $(X, \sigma_X)$ is an action of the real torus $(\GC, \tau_2)$ on $(X, \sigma_X)$.  Fix a real torus $(\GC^2, \tau_2)$. Let   $M:=\Hom_{gr}(\GC^{2}, \GC)$ and $N$ its dual lattice. 
The condition (\ref{eqdivquasisplit}) of Proposition  \ref{ResultQS} becomes:
$$\forall  m \in \omega_{M} \cap M, \ {\sigma_Y}^*(\D(m)) = \D(\tilde{\tau}_2(m)).$$

\begin{example}   The sequences of Example \ref{RWA3''} admit a $\Gamma$-equivariant section $s : N' \to N$ defined by 
\begin{equation*}
 s:= \begin{bmatrix}
1 & 0 & 0 \\
0 & 1 & 0 \\
\end{bmatrix}
\end{equation*}
 An Altmann-Hausen  presentation of the $(\GC^2, \tau_2)$-action on $(\AC^3, \sigma')$ is given by 
\begin{equation*}
((Y, \sigma_Y), \D, h),
\end{equation*}
where (see Example  \ref{RWA3''}):
\begin{itemize}
\item $Y := \mathbb{P}^1 = {\AC^1} \cup \{ \infty \} $;   
\item $\sigma_Y$ is the complex conjugation on the coordinates;
\item $\D  :=  \Delta \otimes \{ \infty \}$; and   
\item $h := 1$.
\end{itemize}
\end{example}

\begin{example}  \label{RWA4''}
We give an Altmann-Hausen presentation of the $\GC^2$-action on $\AC^4$ introduced in Example \ref{RWA4}.   Using toric downgrading results of  \cite[\S 11]{Alt}, we  obtain  the presentation of the $\GC^2$-action on $X$.
Consider the immersion  $\T:= \GC^2 \hookrightarrow \GC^4$, $(s,t) \mapsto (s,t,st^2, s^2t)$. We denote  $\T_Y:= \GC^4/\T$,  $M:=\Hom_{gr}(\T, \GC)$, $M':=\Hom_{gr}(\GC^4, \GC)$ and   $M_Y:=\Hom_{gr}(\T_Y, \GC)$. Then, we have the split short exact sequences of Remark \ref{FracsectionR} with:
\begin{multicols}{4}
 \begin{center}
{ \ \\
$F^* := \begin{bmatrix}
1 & 0 & 1 & 2   \\
0 & 1 & 2 & 1   \\
\end{bmatrix}$;
\ \\}
\end{center} 

\begin{center}
{$P^* := \begin{bmatrix}
 -1  & -2  \\
-2 & -1   \\
1 & 0   \\
0 & 1   \\
\end{bmatrix}$};
\end{center}

\begin{center}
{$\tilde{\tau}' := \begin{bmatrix}
0 & 1 & 0 & 0 \\
1 & 0 & 0 & 0\\
0 & 0 & 0 & 1 \\
0 & 0 & 1 & 0 \\
\end{bmatrix}$;
\ \\}
\end{center}

\begin{center}
{ \ \\ 
$\tilde{\tau}_Y = \tilde{\tau}  := \begin{bmatrix}
0 & 1 \\
1 & 0  \\
\end{bmatrix}$.
\ \\}
\end{center}
\end{multicols}
The section $s:N' \to N$ defined by
{$s := \begin{bmatrix}
1 & 0 & 0 & 0 \\
0 & 1 & 0 & 0\\
\end{bmatrix}$}
is $\Gamma$-equivariant. Let $Y$ be the toric variety defined by the following fan  obtained from $P$ (see \cite[\S 11]{Alt} for details):
\begin{center}
\begin{tikzpicture}[line cap=round,line join=round,>=triangle 45,x=1.0cm,y=1.0cm]
\begin{axis}[
x=1.0cm,y=1.0cm,
axis lines=middle,
ymajorgrids=true,
xmajorgrids=true,
xmin=-2.5,
xmax=1.5,
ymin=-2.25,
ymax=1.5,
xtick={-3.0,-2.0,...,3.0},
ytick={-3.0,-2.0,...,3.0},]
\clip(-3.,-3.) rectangle (3.,3.);
\draw [->,line width=1.pt] (0.,0.) -- (1.,0.);
\draw [->,line width=1.pt] (0.,0.) -- (0.,1.);
\draw [->,line width=1.pt] (0.,0.) -- (-2.,-1.);
\draw [->,line width=1.pt] (0.,0.) -- (-1.,-2.);
\begin{scriptsize}
\draw[color=black] (1.2,0.15) node {$v_1$};
\draw[color=black] (0.2,1.2) node {$v_2$};
\draw[color=black] (-2.25,-1) node {$v_3$};
\draw[color=black] (-1.5,-2) node {$v_4$};
\end{scriptsize}
\end{axis}
\end{tikzpicture}
\end{center}
Since this fan is  stable under  the lattice involution $\hat{\tau}_Y$, the $\R$-group  structure $\tau_Y$ extends to an $\R$-structure $\sigma_Y$ on $Y$. 
Let $\D$ be the divisor defined in Example \ref{RWA4'}, where $D_1, \dots, D_4$ are the toric divisors obtained from the rays $v_1, \dots, v_4$ respectively.

An Altmann-Hausen  presentation of the $(\GC^2, \tau_2)$-action on $(\AC^4, \sigma')$ is given by $((Y, \sigma_Y), \D, h=1)$.
An  Altmann-Hausen  presentation of the $(\GC^2, \tau_2)$-action on $(X, \sigma)$ is given by $((Y_X, \sigma_{Y_X}), \D_X, h_X)$, where:
\begin{itemize}
\item $Y_X $ is the  closure of  the image of $\GC^4 \cap X  $ in $Y$; 
\item $\D_X  :=  \Delta_3 \otimes D_3 \cap Y_{X} +  \Delta_4 \otimes D_4 \cap Y_{X} $; 
\item  $\sigma_{Y_X} = \sigma_{Y} \big{|}_{Y_X}$; and 
\item $h_X := 1$.
\end{itemize}
\end{example}

\subsection{Circles actions  on normal affine $\R$-varieties} \label{SubsectionC}

By definition, a $\s^1$-action on a algebraic $\R$-variety $(X, \sigma_X)$ is an action of the  torus $(\GC, \tau_1)$ on $(X, \sigma_X)$. Note that $\GC$ acts on $X$ and  the algebra $\C[X]$ is  graded by $M\cong \Z$. By  \cite[Lemma 1.7]{Lien},    $\C[X]_m \neq 0$ for all $m \in M$, so the weight cone of this action is $\omega_{M}:= M_{\Q}$.

In this case, the   pair $(\D, h)$ on the  quotient $(Y, \sigma_Y)$ mentioned in Theorem \ref{ResultT} \textit{(ii)} consists of a proper polyhedral divisor $\D = \sum [ a_i, b_i ] \otimes D_i$ and a $\Gamma$-invariant rational function $h$ on $Y$   such that $\sigma_Y^*( \D(m)) = \D(-m) + \Div_Y(h^{-m})$ for all $m \in M$ (we recover \cite[Theorem 2.7]{Lien}).

In the case of $\s^1$-actions, we do not have $\H^1(\Gamma, \C(Y)^*)=\{1 \}$. Indeed, in contrast to the split case, we cannot apply Hilbert's theorem 90   because the action defined on $\C(Y)^*$ does not extend to an action on the field $\C(Y)$ (see the proof of Lemma \ref{H1}).

Let $Y= \Spec(\C)$   endowed with the complex conjugation, and let $G:= \Hom_{gr}(\Z, \C(Y)^*) \cong \Hom_{gr}(\Z, \C^*) \cong \C^*$. The $\Gamma$-action on $G \cong \C^*$ is given by $\gamma \star z := \bar{z}^{ \ -1}$. A cocycle is thus a complex number $z \in \C^*$ such that $z \bar{z}^{ \ -1}= 1$, that is a real number. This cocycle is equivalent to $1 \in \C^*$ if there exists $w \in G \cong \C^*$ such that $z = w^{-1} \bar{w}^{ \ -1} = \rvert w \lvert^{-2} > 0$.   Then, $\H^1(\Gamma, \C^* ) \cong \{ \pm 1\}$.

\begin{example} \label{exc} (See \cite[Proposition 3.1]{Lien}). 
There are only  two $\R$-forms of $\GC$ compatible with an $\s^1$-action:   the real circle $X_{1} = \s^1$ and $X_{-1} = \Spec(\R[x,y]/(x^2+y^2+1))$.  An $\R$-structure  associated to $X_{1}$ is $\sigma_{1} := \tau_{1} : \GC \to \GC, z \mapsto  \overline{z}^{ \ -1 }$ and an $\R$-structure  associated to $X_{-1}$ is $\sigma_{-1} : \GC \to \GC, z \mapsto -\overline{z}^{ \ -1 }$. Consider the action by translation $\mu : \GC \times \GC \to \GC, (t,x) \mapsto tx$. The varieties $X_{1}$ and $X_{-1}$ are   endowed with an $\s^1$-action since the  following diagram  commutes for $i \in \{-1, 1 \}$:
\begin{center}
\begin{tikzpicture}
\matrix (m) [matrix of math nodes,row sep=0.75em,column sep=6em,minimum width=2em]
 {
          \GC \times \GC & \GC  \\
          \GC \times \GC & \GC   \\};
 \path[-stealth]
    (m-1-1) edge node  [above]{ $\mu $} (m-1-2)
            edge node  [left]{${\tau_1}  \times {\sigma_{i} }  $} (m-2-1)
    (m-1-2) edge node  [right]{${\sigma_{i} } $}  (m-2-2)
    (m-2-1) edge node  [below]{$\mu$} (m-2-2);
\end{tikzpicture}
\end{center}
The $\R$-variety $(Y=\Spec(\C), \sigma_Y)$, where $\sigma_Y$  is the complex conjugation, is a real Altmann-Hausen quotient of both $X_{1}$ and $X_{-1}$. 

A  pair $(\D_{1},h_{1})$ on $Y$ consists of the trivial divisor and the real number $h_{1}=1 \in \C(Y)^*=\C^*$.  Note however that $\H^1(\Gamma, \C^* ) \neq \{   1\}$.

A  pair $(\D_{-1},h_{-1})$ on $Y$ consists of the trivial divisor and the real number $h_{-1}=-1 \in \C(Y)^*=\C^*$. Note that we cannot find a complex number $g \in \C(Y)^*$ such that the cocycle $h_{-1}$ satisfy $h_{-1}=g \bar{g}$, so we cannot find a presentation where the $\Gamma$-invariant rational function $h$ mentioned in Theorem \ref{ResultT2} is equal to $1$.   
\end{example}

\bibliographystyle{plain}

\begin{thebibliography}{10}

\bibitem{Alt03}
Klaus Altmann and J\"{u}rgen Hausen.
\newblock Polyhedral divisors and algebraic torus actions.
\newblock working paper or preprint, https://arxiv.org/abs/math/0306285, 2003.

\bibitem{Alt}
Klaus Altmann and J\"{u}rgen Hausen.
\newblock Polyhedral divisors and algebraic torus actions.
\newblock {\em Math. Ann.}, 334(3):557--607, 2006.
\newblock DOI 10.1007/s00208-005-0705-8.

\bibitem{Ben}
Mohamed Benzerga.
\newblock {\em {Structures r{\'e}elles sur les surfaces rationnelles}}.
\newblock Thesis, {Universit{\'e} d'Angers}, 2016.
\newblock https://tel.archives-ouvertes.fr/tel-01471071.

\bibitem{BorelSerre}
A.~Borel and J.-P. Serre.
\newblock Th\'{e}or\`emes de finitude en cohomologie galoisienne.
\newblock {\em Comment. Math. Helv.}, 39:111--164, 1964.
\newblock ISBN 0010-2571.

\bibitem{Cox}
David~A. Cox, John~B. Little, and Henry~K. Schenck.
\newblock {\em Toric varieties}, volume 124 of {\em Graduate Studies in
  Mathematics}.
\newblock American Mathematical Society, Providence, RI, 2011.
\newblock ISBN 978-0-8218-4819-7.

\bibitem{Lien}
Adrien Dubouloz and Alvaro Liendo.
\newblock {Normal real affine varieties with circle actions}.
\newblock To appear in Annales IF,
  https://hal.archives-ouvertes.fr/hal-01909130, 2018.

\bibitem{Petit}
Adrien Dubouloz and Charlie Petitjean.
\newblock {Rational real algebraic models of compact differential surfaces with
  circle actions}.
\newblock {\em Polynomial Rings and Affine Algebraic Geometry, PRAAG 2018,
  Tokyo, Japan, February 12-16}, 319:109 -- 142, 2020.
\newblock DOI 10.1007/978-3-030-42136-6.

\bibitem{Eisenbud}
David Eisenbud.
\newblock {\em Commutative algebra}, volume 150 of {\em Graduate Texts in
  Mathematics}.
\newblock Springer-Verlag, New York, 1995.
\newblock With a view toward algebraic geometry. ISBN 0-387-94268-8;
  0-387-94269-6.

\bibitem{Fult}
William Fulton.
\newblock {\em Introduction to toric varieties}, volume 131 of {\em Annals of
  Mathematics Studies}.
\newblock Princeton University Press, Princeton, NJ, 1993.
\newblock The William H. Roever Lectures in Geometry. ISBN 0-691-00049-2.



\bibitem{Huruguen}
Mathieu Huruguen.
\newblock Toric varieties and spherical embeddings over an arbitrary field.
\newblock {\em Journal of Algebra}, 342(1):212--234, 2011.
\newblock DOI 10.1016/j.jalgebra.2011.05.031.

\bibitem{KamRuss}
T.~Kambayashi and P.~Russell.
\newblock On linearizing algebraic torus actions.
\newblock {\em J. Pure Appl. Algebra}, 23(3):243--250, 1982.
\newblock DOI 10.1016/0022-4049(82)90100-1.

\bibitem{Lang}
Kevin Langlois.
\newblock Polyhedral divisors and torus actions of complexity one over
  arbitrary fields.
\newblock {\em Journal of Pure and Applied Algebra}, 219(6):2015--2045, 2015.
\newblock DOI 10.1016/j.jpaa.2014.07.021.

\bibitem{LienT}
Alvaro Liendo.
\newblock {\em {Affine T-varieties: additive group actions and singularities}}.
\newblock Thesis, {Universit{\'e} de Grenoble}, 2010.
\newblock https://tel.archives-ouvertes.fr/tel-00592274.

\bibitem{Moser}
Lucy Moser-Jauslin and Ronan Terpereau.
\newblock {Real Structures on Horospherical Varieties}.
\newblock {\em Michigan Mathematical Journal}, pages 1 -- 38, 2021.
\newblock DOI 10.1307/mmj/20195793.

\bibitem{Serre}
Jean-Pierre Serre.
\newblock {\em Cohomologie galoisienne}, volume~5 of {\em Lecture Notes in
  Mathematics}.
\newblock Springer-Verlag, Berlin, fifth edition, 1994.
\newblock ISBN 3-540-58002-6.

\end{thebibliography}

\end{document}